\documentclass[a4paper]{article}

\usepackage[bottom=2in]{geometry} 

\usepackage{amsmath}
\usepackage{amsthm}
\usepackage{amssymb}
\usepackage{color,todonotes}
\usepackage{cancel}
\usepackage[normalem]{ulem}
\usepackage{comment}
\usepackage[utf8]{inputenc}
\usepackage{hyperref}
\hypersetup{
	unicode,
	pdfauthor={Author One, Author Two, Author Three},
	pdftitle={A simple article template},
	pdfsubject={A simple article template},
	pdfkeywords={article, template, simple},
	pdfproducer={LaTeX},
	pdfcreator={pdflatex}
}

\usepackage[sort&compress,numbers,square]{natbib}
\theoremstyle{plain}
\newtheorem{theorem}{Theorem}[section] 
\newtheorem{corollary}[theorem]{Corollary}
\newtheorem{lemma}[theorem]{Lemma}
\newtheorem{proposition}[theorem]{Proposition}
\newtheorem{assumption}[theorem]{Assumption}
\newtheorem{remark}[theorem]{Remark}

\theoremstyle{definition}
\newtheorem{definition}[theorem]{Definition}
\newtheorem{example}[theorem]{Example}

\usepackage[normalem]{ulem}

\usepackage{graphicx, color}
\graphicspath{{fig/}}

\usepackage{algorithm, algpseudocode} 
\usepackage{mathrsfs} 
\usepackage{bm}
\def\R{{\mathbb R}}
\def\Ab{{\mathsf A}}
\def\Bb{{\mathsf B}}
\def\Cb{{\mathsf C}}

\def\Eb{{\mathsf E}}

\def\Gb{{\mathsf G}}

\def\Ib{{\mathsf I}}

\def\Mb{{\mathsf M}}

\def\Qb{{\mathsf Q}}
\def\Rb{{\mathsf R}}
\def\Sb{{\mathsf S}}

\def\Ub{{\mathsf U}}
\def\Vb{{\mathsf V}}
\def\Wb{{\mathsf W}}

\def\Yb{{\mathsf Y}}
\def\Zb{{\mathsf Z}}
\def\T{^{\top}}

\def\aa{{\mathbf a}}

\def\cc{{\mathbf c}}

\def\e{{\mathbf e}}

\def\h{{\mathbf h}}

\def\s{{\mathbf s}}

\def\u{{\mathbf u}}
\def\v{{\mathbf v}}
\def\w{{\mathbf w}}
\def\x{{\mathbf x}}
\def\y{{\mathbf y}}
\def\z{{\mathbf z}}

\def\mmu{{\bm  \mu}}
\def\sSigma{{\bm \Sigma}}
\def\svec{\operatorname{svec}}
\def\PP{{\mathbb P}}
\def\EE{{\mathbb E}}
\def\Var{{\mathbb V}{\rm ar}}
\def\Cov{{\mathbb C}{\rm ov}}

\def\DDD{{\mathcal D}}

\def\T{^{\top}}

\newcommand{\norm}[1]{\left\lVert#1\right\rVert}

\usepackage{float}

\usepackage{enumitem}

\usepackage{bbm}

\usepackage{subcaption} 
\usepackage{graphicx} 

\usepackage{booktabs} 
\usepackage{makecell} 
\usepackage{array} 
\usepackage{multirow} 

\iftrue
\newcommand{\comments}[1]{\footnote{\textcolor{blue}{\textit{#1}}}}

\newcommand{\HZ}[1]{\textcolor{violet}{#1}}
\newcommand{\MB}[1]{\textcolor{red}{#1}}
\newcommand{\DV}[1]{\textcolor{blue}{#1}}
\else
\newcommand{\comments}[1]{}

\newcommand{\HZ}[1]{#1}
\newcommand{\MB}[1]{#1}
\newcommand{\DV}[1]{#1}
\fi

\title{Decision-dependent distributionally robust standard quadratic optimization with Wasserstein ambiguity}
\author{Immanuel M. Bomze\thanks{Research Network Data Science and Faculty of Mathematics, University of Vienna;\\
\hbox{}\qquad L2S, Paris Saclay University, CNRS, CentraleSupélec; and 
CCOR/CIAS, Corvinus University Budapest;\\
\hbox{}\qquad postal address: Oskar-Morgenstern-Platz 1, A-1090 Vienna, Austria. E-mail: immanuel.bomze@univie.ac.at} \and Daniel de Vicente\thanks{University of Vienna, Oskar-Morgenstern-Platz 1, A-1090 Vienna, Austria;\\
\hbox{}\qquad E-mail: 
a12032143@unet.univie.ac.at}
\and Abdel Lisser\thanks{Université Paris Saclay, CNRS, CentraleSupélec, Laboratoire des Signaux et des Systèmes, \\
\hbox{}\qquad Bat Breguet, 3 Rue Joliot Curie, F-91190 Gif-sur-Yvette, France. E-mail: abdel.lisser@l2s.centralesupelec.fr}
\and Heng Zhang\thanks{Université Paris Saclay, CNRS, CentraleSupélec, Laboratoire des Signaux et des Systèmes, \\
\hbox{}\qquad Bat Breguet, 3 Rue Joliot Curie, F-91190 Gif-sur-Yvette, France. E-mail:
heng.zhang@centralesupelec.fr}
}

\date{\today}

\begin{document}
	\maketitle
	\begin{abstract}
The standard quadratic optimization problem (StQP) consists of minimizing a quadratic form over the standard simplex. Without assuming convexity or concavity of the quadratic form, the StQP is NP-hard. This problem has many interesting applications ranging from portfolio optimization to machine learning.
Sometimes, the data matrix is uncertain but some information about its distribution can be inferred, e.g. a distance to a reference distribution (typically, the empirical distribution after sampling). In distributionally robust optimization, the goal is to hedge against the worst case of all possible distributions in an ambiguity set, defined by  above mentioned distance. In this paper we will focus on distributionally robust StQPs under Wasserstein distance, and show equivalence to an accordingly modified deterministic instance of an StQP. This blends well into recent findings for other approaches of StQPs under uncertainty. We will also address out-of-sample performance guarantees. Carefully designed experiments shall complement and illustrate the approach.
	\end{abstract}
\textbf{Keywords:} 
transportation-information inequality, Gaussian Orthogonal Ensemble, Wishart Ensemble, maximum weighted clique problem 

\section{Introduction}\label{sec:1.intro}

\subsection{Motivation and historical remarks}
The standard quadratic optimization problem (StQP) consists of minimizing a quadratic form over the standard simplex
\begin{equation}\label{StQP}\tag{StQP}
\min_{\x \in \Delta} \,\x^{\top}\Qb\x
\end{equation}
where $\Delta:= \{\x \in \mathbb{R}^n_+: \e^{\top}\x = 1\}$ is the standard simplex in $\mathbb{R}^n$ and $\Qb$ is a symmetric $n\times n$ matrix (denoted by $\Qb\in \mathcal S^n$ in the sequel). Here $\e\in \mathbb{R}^n$ is the vector of all ones and ${}^{\top}$ denotes transposition. Random parameters are denoted with a tilde sign. The objective function is already in general form since any general quadratic objective function $q(\x):=\x^{\top}\Ab\x + 2\cc^{\top}\x$ can be written in homogeneous form  $q(\x):= \x^{\top}\Qb\x$ by defining  $\Qb:= \Ab + \cc\e^{\top} + \e\cc^{\top}\in \mathcal S^n$.

\noindent Even though the StQP seems rather easy —minimization of a quadratic function under linear constraints — it is NP-hard without assumptions on the definiteness of the matrix $\Qb$. 
{In \cite{motzkin-straus-1965}, the authors} showed that the maximum clique problem, a well-known NP-hard problem, can be reduced to an StQP. Hence, the StQP is often regarded as the simplest of hard problems \cite{bomze2018complexity} since it contains the simplest non-convex objective function which is a quadratic form, and the simplest polytope as feasible set. The StQP is a very flexible optimization class that allows for modelling of diverse problems such as portfolio optimization problems  \cite{markowitz-1952}, pairwise clustering \cite{pavan2003dominant} and replicator dynamics \cite{bomze1998standard}.

\noindent The only data required to fully characterize an StQP is the data matrix $\Qb$. However, in many applications the matrix $\Qb$ is uncertain. StQPs with uncertain data have been explored in the literature.

\noindent One of the most natural ways to deal with uncertain objective functions is the worst-case approach, where there is no information on the distribution of the data matrix $\Qb$ except for its support. 

\noindent {The concept of a robust standard quadratic optimization problem was introduced in \cite{bomze2021trust}, where it was formulated as a maximin problem. In that work, various uncertainty sets for the uncertain matrix were investigated, and it was shown that the copositive relaxation gap coincides with the minimax gap. The authors also demonstrated that the robust StQP reduces to a deterministic StQP.}
\noindent {A setting in which the uncertain data matrix $\widetilde \Qb$ 
 contains a deterministic principal submatrix while the remaining entries are randomly distributed according to a known probability distribution was investigated in \cite{bomze2022two}. In that model, the decision-maker makes a first decision before the uncertainty is revealed and a second decision afterwards. The decision variable is therefore divided into first-stage and second-stage components, and the resulting StQP is formulated as a two-stage stochastic optimization problem, which does not admit a deterministic StQP counterpart.}
\noindent The concept of a chance-constrained StQP was introduced in \cite{bomze2025uncertain}, where it was formulated as the minimization of the Value-at-Risk of the random variable $\x^{\top}\widetilde\Qb\x$. Under suitable distributional assumptions, the authors showed that this model is equivalent to a deterministic StQP.

\noindent By contrast, in this paper we will tackle the StQP with uncertain data matrix $\widetilde \Qb$ via distributionally robust optimization (DRO) with Wasserstein ambiguity. DRO is a modeling framework for optimization problems under uncertainty where the underlying probability distribution is not fully known and only assumed to lie within an ambiguity set of probability distributions. A DRO problem is thus a minimax game where the decision player plays against the ``nature" which governs the underlying probability distribution, see e.g. 
\cite{lin2022distributionally, rahimian2022frameworks}. 
Distributionally robust optimization arguably started with 
\cite{scarf} who proposed a minimal problem for selecting the optimal quantity of an item, where the full distribution of the demand was unknown and only the mean and standard deviation were known. This type of ambiguity would later be called moment-based ambiguity and was also used in many other works, e.g. 
\cite{vzavckova1966minimax} and 
\cite{delage2010distributionally}. 
Historically for the first time, \cite{pflug2007ambiguity} incorporated  the 1-Wasserstein distance to a distributionally robust optimization problem. This was continued in 
\cite{wozabal2012framework,pflug2014multistage}. Later,  
\cite{mohajerin2018data,zhao2018data} 
showed that for distributionally robust optimization problems with 1-Wasserstein ball centered at the empirical distribution the worst-case distribution can be explicitly constructed. Those results were generalized in 
\cite{gao2023distributionally,blanchet2019quantifying} 
These papers establish strong duality results for distance-based cost functions~\cite{gao2023distributionally} 
on $p$-Wasserstein balls with $p\geq 1$ or 
similar strong duality results for general lower semi-continuous cost functions.

\noindent Although the StQP features a non-convex objective, most tractability results in distributionally robust optimization hinge on convex or linear structure. This is reflected in the unified frameworks of 
\cite{delage2010distributionally} and 
\cite{wiesemann2014distributionally}, later extended to two-stage linear models by 
\cite{hanasusanto2015k}. Addressing the central issue of out-of-sample guarantee, 
\cite{mohajerin2018data} leveraged measure-concentration results of 
\cite{fournier2015rate} to guide the choice of the ambiguity radius. To reduce the conservatism implied by these bounds, later work proposed data-driven calibration via resampling, including the bootstrap-based methods of 
\cite{bertsimas2022bootstrap} and empirical likelihood techniques developed by 
\cite{lam2016robust}. More recently, 
\cite{blanchet2019quantifying} and 
\cite{gao2023finite} strengthened these guarantees by exploiting optimal-transport geometry and additional problem structure to alleviate the curse of dimensionality.

\subsection{Notations, contributions  and organization of the paper}

\subsubsection*{Norms and sets.}
An arbitrary norm on $\mathbb{R}^m$ is denoted by $\|\cdot\|$, while $\|\cdot\|_q$ denotes the specific $\ell_q$-norm for $1 \le q \le \infty$. 
The unit sphere in $\mathbb{R}^n$ is
\[
\mathbb{S}^{n-1} := \{ \y \in \mathbb{R}^n : \|\y\|_2 = 1 \}.
\]
For a set $\mathcal C \subseteq \mathbb{R}^m$, we denote its boundary, closure, conic hull, and normal cone at $\z \in \mathbb{R}^m$ by $\partial C$, $\mathrm{cl}(\mathcal C)$, 
$$\mathrm{cone}(\mathcal C):= \{\y = \lambda \v: \lambda \geq 0, \v \in \mathcal C\}$$
and 
\begin{equation}\label{normal cone of M set}
\mathcal N_{\mathcal C}(\z):=\{\v \in \mathbb R^m: \v^{\top}\z \geq \v^{\top}\y\,,\text{ for all } \y \in \mathcal C\}\,,
\end{equation}
respectively.
\subsubsection*{Probability measures and divergences.}
Let $\mathcal{P}(\mathbb{R}^m)$ be the set of all Borel probability measures on $\mathbb{R}^m$. 
For $p \ge 1$, define
\[
\mathcal{P}_p(\mathbb{R}^m)
:=
\left\{
\mathbb P \in \mathcal{P}(\mathbb{R}^m)
:
\mathbb{E}_{\mathbb P}[\|\boldsymbol{\tilde{\xi}}\|^p] < \infty
\right\}.
\]
For $\boldsymbol{\xi} \in \mathbb{R}^m$, the Dirac measure located at $\boldsymbol{\xi}$ is denoted by $\delta_{\boldsymbol{\xi}}$.
For $\{\mathbb P, \mathbb P^{\prime} \}\subset \mathcal{P}(\mathbb{R}^m)$, the Wasserstein distance and Kullback--Leibler divergence are denoted by $W_p(\mathbb P,\mathbb P')$ and $D(\mathbb P\|\mathbb P')$, respectively. Absolute continuity is denoted by $\mathbb P \ll \mathbb P'$. 

\subsubsection*{Data and empirical quantities.}
Let $N \in \mathbb{N}$ be the sample size and $[N] := \{1,\dots,N\}$. 
Given a sample $\{\boldsymbol{\widehat{\xi}}_i\}_{i=1}^N \subset \mathbb{R}^m$, the empirical distribution and sample mean are
\[
\widehat{\mathbb P}_N := \frac{1}{N}\sum_{i=1}^N \delta_{\boldsymbol{\widehat{\xi}}_i},
\qquad
\overline{\boldsymbol{\xi}} := \frac{1}{N}\sum_{i=1}^N \boldsymbol{\widehat{\xi}}_i.
\]
\subsubsection*{Probability distributions and random matrices.}
We write $\mathrm{Exp}(\lambda)$ for the exponential distribution with parameter $\lambda$, 
$\chi^2(k)$ for the chi-squared distribution with $k$ degrees of freedom, 
$\mathcal{N}(\mu,\sigma^2)$ for the univariate normal distribution with mean $\mu$ and variance $\sigma^2$, and 
$\mathcal{N}_m(\boldsymbol{\mu},\boldsymbol{\Sigma})$ for the $m$-dimensional normal distribution with mean $\boldsymbol{\mu}$ and covariance matrix $\boldsymbol{\Sigma}$. 
The Gaussian Orthogonal Ensemble of dimension $n$ is denoted by $\mathrm{GOE}(n)$, and the Wishart Ensemble of dimension $n$ with covariance matrix $\boldsymbol{\Sigma}$ and $k$ degrees of freedom by $\mathcal W_n(\boldsymbol{\Sigma},k)$.
\subsubsection*{Matrix notation.}
The trace of a matrix $\Ab \in \mathbb{R}^{n \times n}$ is denoted by $\mathrm{tr}(\Ab)$. Let $\mathcal{S}^n$ be the space of symmetric matrices. For $\Ab \in \mathcal{S}^n$, we denote the smallest and largest eigenvalues by $\lambda_{\min}(\Ab)$ and $\lambda_{\max}(\Ab)$, and the symmetric vectorization by $\mathrm{svec}(\Ab)$. 
For $\Ab,\Bb \in \mathcal{S}^n$, the Frobenius inner product and norm are
\[
\langle \Ab,\Bb\rangle_F := \mathrm{tr}(\Ab^\top \Bb),
\qquad
\|\Ab\|_F := \sqrt{\langle \Ab,\Ab\rangle_F}.
\]
The identity matrix is denoted by $\Ib$, and the all-ones matrix by $\Eb := \e\e^\top$. 
For $\Ab,\Bb \in \mathcal{S}^n$, we write $\Ab \succeq \Bb$ if $\Ab-\Bb$ is positive semidefinite.
\subsubsection*{Main contributions and organisation of this paper}

\noindent This paper
\begin{itemize}
\item characterizes the set of first moments of all distributions in a Wasserstein ambiguity ball and proves that it coincides with a closed ball centered at the nominal mean, thereby enabling a simplified treatment of moment uncertainty in distributionally robust optimization;
\item develops reformulations of the inner worst-case (supremum) problem using push-forward measures and dual norms, providing the technical foundation for tractable Wasserstein-DRO reformulations;
\item introduces the distributionally robust standard quadratic optimization problem (DRStQP) and shows that, although the problem is nonconvex in the decision variable, its linear dependence on the uncertain matrix allows the Wasserstein-DRO formulation to be reduced to a robust optimization problem;
\item derives tractable deterministic reformulations of DRStQP for both fixed ambiguity radii and decision-dependent ambiguity radii that are specified as part of the optimization model;
\item establishes a strict minimax inequality example for smooth norms in the cost function, thereby justifying the use of (distributionally) robust optimization instead of solving the corresponding wait-and-see (maximin) formulation;
\item unifies three uncertainty models for the standard quadratic optimization problem—robust StQP, chance-constrained StQP, and distributionally robust StQP—and proves their equivalence under specific distributional assumptions;
\item develops out-of-sample performance guarantees for DRStQP by calibrating the Wasserstein radius from data such that the ambiguity set centered at the empirical distribution contains the unknown true distribution with high probability, ensuring that the DRO optimal value upper-bounds the true out-of-sample performance with high probability;
\item proves a finite-sample guarantee based on measure concentration results and highlights the resulting curse of dimensionality as the problem dimension increases;
\item introduces additional structural assumptions to mitigate dimensionality effects and derives improved, potentially dimension-insensitive, convergence rates under these stronger assumptions;
\item validates the proposed framework through computational experiments on the maximum weighted clique problem, identifying structural transitions in solutions and observing runtime peaks when nominal objective and regularization effects are balanced;
\item investigates the decision-dependent ambiguity setting experimentally, showing how noise level and ambiguity radius influence clique structure, with spectral properties governing sparsity versus saturation of selected cliques.
\item Demonstrates scalability across varying problem dimensions and shows that larger ambiguity levels can enhance structural robustness to noise while preserving computational tractability; and
\item provides appendices containing additional results for alternative norms, complementary proofs, and a strengthened finite-sample guarantee obtained under stronger assumptions using more advanced probabilistic arguments.
\end{itemize}

\noindent
The paper is organized as follows. Section 2 reviews the distributionally robust optimization (DRO) framework and introduces the Wasserstein ambiguity set. We show that its first-moment characterization allows a simplified reformulation of the inner problem. 
\newline
Section 3 defines the distributionally robust standard quadratic optimization problem (DRStQP). Although this problem is nonconvex in the decision variable, it depends linearly on the uncertain parameters, which enables a tractable robust reformulation. We derive deterministic equivalents for both fixed and decision-dependent ambiguity radii. We also present an example illustrating a strict minimax inequality, motivating the use of DRO instead of a wait-and-see approach. Furthermore, we unify three models of uncertain quadratic optimization—robust, chance-constrained, and distributionally robust—and establish their equivalence under suitable assumptions.
\newline
Section 4 develops out-of-sample performance guarantees and proposes a data-driven calibration of the ambiguity radius to ensure high-probability coverage of the true distribution. We derive finite-sample guarantees based on measure concentration results and discuss the curse of dimensionality. To address this limitation, we introduce additional structural assumptions that lead to improved performance bounds.
\newline
Section 5 evaluates the framework through experiments on the maximum weighted clique problem. We analyze both decision-independent and decision-dependent settings, highlighting structural transitions, runtime behavior, and robustness to noise. Scalability tests confirm stable performance across problem sizes. Additional results and proofs are provided in the appendices.

\section{Basics in distributionally robust optimization}\label{sec:2 basics in DRO}

\subsection{The setting}

Suppose that we have an optimization problem where the objective function
$f:\mathcal X \times \mathbb R^m \to \mathbb R$ is a random function that depends on the the decision variable $\x\in \mathcal X \subseteq \mathbb R^n$ and on a random vector $\boldsymbol{\tilde \xi} \in \mathbb R^m$ whose realization is only revealed after the decision $\x$ has been fixed. If the ``true" distribution $\mathbb P_{\rm true}$ of $\boldsymbol{\tilde \xi}$ is known, one can choose to minimize the expected value of $f$, that is to solve the stochastic optimization problem
$$
\inf_{\x \in \mathcal X} \, \mathbb E_{\mathbb P_{\rm true}}[f(\x,\boldsymbol{\tilde \xi})]\,.
$$

\noindent In many applications, the true distribution $\mathbb P_{\rm true}$ is unknown.  Nevertheless, there might be some kind of information available about $\mathbb P_{\rm true}$. Let $\DDD$ denote the ambiguity set of probability distributions that are compatible with the known information about $\mathbb P_{\rm true}$. Naturally, the ambiguity set $\DDD$ will be a subset of $\mathcal P(\mathbb R^m)$
. The distributionally robust optimization problem consists of finding a decision $\x$ that is robust against all probability distributions $\mathbb P$ in the ambiguity set $\mathcal D$
\begin{equation}\label{DRO}\tag{DRO}
\inf_{\x \in \mathcal X} \,\sup_{\mathbb P\in \mathcal D} \, \mathbb E_{\mathbb P}[f(\x,\boldsymbol{\tilde \xi})]\,.
\end{equation}

\noindent Beforehand note that there is a general minimax inequality regardless where we put the expectation operator. It is easy to see that we always have, for a general ambiguity set $\mathcal D$,
$$\sup_{\PP\in\DDD} \, \EE_{\PP}  \left [ \inf_{\x \in \mathcal X} f(\x,\boldsymbol{\tilde \xi})\right ]
\leq  \sup_{\PP\in\DDD} \, \inf_{\x \in \mathcal X}  \EE_{\PP}  [f(\x,\boldsymbol{\tilde \xi})]
\leq
\inf_{\x \in \mathcal X} \,\sup_{\PP\in \DDD} \,  \EE_{\PP}[f(\x,\boldsymbol{\tilde \xi})]\,.
$$

\noindent In some applications, the decision maker has access to the true expectation
$$
\mmu_{\rm true} := \mathbb E_{\mathbb P_{\rm true}}[\boldsymbol{\tilde \xi}]
$$
and true covariance matrix
$$
\sSigma_{\rm true} = \Cov_{\mathbb P_{\rm true}}[\boldsymbol{\tilde \xi}] = \mathbb E_{\mathbb P_{\rm true}}[(\boldsymbol{\tilde \xi} - \mmu_{\rm true})(\boldsymbol{\tilde \xi} - \mmu_{\rm true})^{\top}]
$$
of the random vector $\boldsymbol{\tilde \xi}$. In such cases, the natural ambiguity set $\mathcal D$ consists of all probability distributions with fixed first two moments, see, e.g. 
\cite{delage2010distributionally}:
$$
\mathcal D:=\{\mathbb P \in \mathcal P(\mathbb R^m): \mathbb E_{\mathbb P}[\boldsymbol{\tilde \xi}] = \mmu_{\rm true},\, \Cov_{\mathbb P}[\boldsymbol{\tilde \xi}] = \sSigma_{\rm true}\}\,.
$$

\begin{remark}\label{rem1}  If $f(\x,\boldsymbol{\tilde\xi})= \h(\x)\T\boldsymbol{\tilde\xi}$ is linear in the uncertain parameter $\boldsymbol{\tilde\xi}$ for some mapping $\h : \mathcal X \to \R^m$, we have
\[
\mathbb E_{\PP} [f(\x,\boldsymbol{\tilde\xi})] = \h(\x)\T \EE_\PP [\boldsymbol{\tilde\xi} ] \,. 
\]
Under above moment-based ambiguity model we have
\begin{equation}\label{fixfirstmom}
\mathcal D \subseteq \{\mathbb P \in \mathcal P(\mathbb R^m) : \mathbb E_{\PP} [\boldsymbol{\tilde \xi}] = \mmu_{\rm true}\}\, .
\end{equation}
Then by the assumed linearity,~\eqref{DRO} reduces to a deterministic equivalent 
\[
 \underset{\x \in \mathcal X}{\inf} \overline f(\x) \quad\mbox{with}
 \quad \overline f(\x):= \h(\x)\T \mmu_{\rm true} =  f(\x,\mmu_{\rm true})\, .
\]
\end{remark}

\subsection{Wasserstein ambiguity}
However, in this paper we will focus on another type of distributionally robust optimization problems centered around a reference distribution.

\noindent Suppose that the decision maker has access to a sample $\{\boldsymbol{\widehat{\xi}}_i\}_{i=1}^N$ from $\mathbb P_{\rm true}$ and denote the empirical distribution by $\widehat\PP_N$.
For $p\in [1,\infty)$, the $p$-Wasserstein distance between two probability distributions $\mathbb P, \mathbb P^{\prime} \in \mathcal P(\mathbb R^m)$ is defined by
$$
W_p(\PP, \PP^{\prime}):= \left(\underset{\pi \in \Pi(\PP, \PP^{\prime})}{\inf}\displaystyle\int_{\mathbb R^m\times \mathbb R^m} \,  \norm{\boldsymbol{\xi}-\boldsymbol{\xi^{\prime}}}^p\, \mathrm{d} \pi (\boldsymbol{\xi},\boldsymbol{\xi^{\prime}})\right)^{1/p}
$$
where $\norm{\cdot}$ is a norm on $\mathbb R^m$ and $\Pi(\PP, \PP^{\prime})$ denotes the set of all joint probability distributions on $\mathbb R^m \times \mathbb R^m$ with marginals $\PP$ and $\PP^{\prime}$. In the fully deterministic setting
$\PP=\delta_{\boldsymbol{\xi}}$ and $\PP^{\prime}=\delta_{\boldsymbol{\xi^{\prime}}}$ we obviously have
$\Pi(\PP, \PP^{\prime}) = \{\delta_{(\boldsymbol{\xi}, \boldsymbol{\xi^{\prime}})}\}$, so there is neither a minimization exercise nor an integration to determine $W_p(\PP, \PP^{\prime})$, and for all $p\ge 1$ we get
$W_p(\PP, \PP^{\prime}) =  \norm{\boldsymbol{\xi}-\boldsymbol{\xi^{\prime}}}\, .$

\noindent The Wasserstein distance $W_p$ is finite on $\mathcal P_p(\mathbb R^m)$. Indeed, if $\{\mathbb P, \mathbb P^{\prime}\}\subset \mathcal P_p(\mathbb R^m)$, then this forces $W_p(\mathbb P, \mathbb P^{\prime}) < \infty$. Moreover, if $W_p(\mathbb P, \mathbb P^{\prime}) < \infty$ and $\mathbb P^{\prime} \in \mathcal P_p(\mathbb R^m)$ then it also must hold $\mathbb P \in \mathcal P_p(\mathbb R^m)$. Both implications follow from Young's inequality
$$
{\norm{\boldsymbol{\xi}}}^p\leq 2^{p-1}\left(\norm{\boldsymbol{\xi}-\boldsymbol{\xi^{\prime}}}^p + {\norm{ \boldsymbol{\xi^{\prime}}}}^p\right)\,.
$$
Based on the $p$-Wasserstein distance, we can construct the Wasserstein ambiguity set (also called Wasserstein ball). The Wasserstein ambiguity set centered at a reference measure with finite $p$-th moment $\mathbb P^{\prime} \in \mathcal P_p(\mathbb R^m)$ with radius $\theta>0$ consists of all probability measures with $p$-Wasserstein distance of at most $\theta$ to $\widehat\PP_N$
\begin{equation*}
 \mathbb B_{\theta,p} (\mathbb P^{\prime}):= \{\PP\in \mathcal P(\mathbb R^m) : W_p(\PP, \mathbb P^{\prime})\leq \theta \} \, \subseteq \mathcal P_p (\mathbb R^m)\,.
\end{equation*}
A popular choice for the reference measure is the empirical measure $\mathbb P^{\prime} = \widehat\PP_N$, since, in particular, it has moments of all orders. Under the choice $\mathcal D = \mathbb B_{\theta, p}(\widehat\PP_N)$ where~\eqref{fixfirstmom} is violated (see Theorem~\ref{expectation ball and wasserstein ball} below), we obtain the distributionally robust optimization problem under Wasserstein ambiguity
\begin{equation}\label{WDRO}\tag{WDRO}
   \underset{\x \in \mathcal X}{\vphantom{\sup}\inf} \, \underset{\PP \in \mathbb B_{\theta,p} (\widehat\PP_N)}{\sup} \,\mathbb E_{\PP}[f(\x,\boldsymbol{\tilde\xi})]\, .
\end{equation}

\noindent Observe that this setting includes the deterministic case via $\widehat\PP_1 = \delta_{\boldsymbol{\widehat{\xi}}_1}$ and $N=1$. 
Replacing the Wasserstein ambiguity set centered at the empirical distribution $\mathbb B_{\theta,p}(\widehat{\mathbb P}_N)$ by the Wasserstein ambiguity set centered at the Dirac distribution at the sample mean $\mathbb B_{\theta,p}(\delta_{\overline{\boldsymbol{\xi}}})$ does not help, since we lose information: the variance from the empirical distribution gets lost and the true distribution may lie outside the ball $\mathbb B_{\theta,p}(\delta_{\overline{\boldsymbol{\xi}}})$ but inside the ball $\mathbb B_{\theta,p}(\widehat{\mathbb P}_N)$, as the following lemma demonstrates.

\begin{lemma}
    Let $\norm{\cdot} = \norm{\cdot}_2$ be the Euclidean norm and $p=2$. Let $\z \in \mathbb R^m$ be an estimator of the true mean $\mmu_{\rm true}$ and set
$$
\theta:=\operatorname{Bias}(\z) = \|\z - \mmu_{\rm true}\|_2 = W_2(\delta_{\z},\delta_{\mmu_{\rm true}})
$$
$$
\sigma^2_{\rm true} := \Var_{\mathbb P_{\rm true}}[\boldsymbol{\tilde \xi}] = \operatorname{tr} \Cov_{\mathbb P_{\rm true}}[\boldsymbol{\tilde \xi}]
$$
$$
\operatorname{MSE}(\z) := \mathbb E_{\widehat{\mathbb P}_N}[\|\z-\boldsymbol{\tilde \xi}\|_2^2] = W^2_2(\delta_{\z},\widehat{\mathbb P}_N)\,.
$$
For the 2-Wasserstein distance between the Dirac distribution $\delta_{\z}$ and the empirical distribution $\widehat{\mathbb P}_N$ we have
$$
W_2^2(\delta_{\z},\widehat{\mathbb P}_N) \to W_2^2(\delta_{\z},\delta_{\mmu_{\rm true}}) + \sigma^2_{\rm true}\,, \quad \text{ as } N \to \infty
$$
by the familiar formula $\operatorname{MSE} = \operatorname{Bias}^2+\Var$. The variance term only disappears in the deterministic case $\mathbb P_{\rm true} = \delta_{\boldsymbol{\widehat \xi}}$.
\end{lemma}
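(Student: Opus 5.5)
First I identify $W_2^2(\delta_{\z},\widehat{\mathbb P}_N)$ explicitly. When one marginal is a Dirac measure, the coupling set $\Pi(\delta_{\z},\widehat{\mathbb P}_N)$ contains only the product coupling $\delta_{\z}\otimes\widehat{\mathbb P}_N$, since any coupling must put all its mass on $\{\z\}\times\mathbb R^m$. The infimum in the definition of $W_2$ is therefore trivial, and
$$
W_2^2(\delta_{\z},\widehat{\mathbb P}_N)=\frac1N\sum_{i=1}^N\|\z-\boldsymbol{\widehat\xi}_i\|_2^2=\operatorname{MSE}(\z).
$$
This justifies the last identity in the statement. In the same way, $W_2(\delta_{\z},\delta_{\mmu_{\rm true}})=\|\z-\mmu_{\rm true}\|_2=\theta$, as already observed in the text for two Dirac measures.

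Next I expand around the true mean, i.e. perform the Bias–Variance decomposition sample-wise. Writing $\z-\boldsymbol{\widehat\xi}_i=(\z-\mmu_{\rm true})+(\mmu_{\rm true}-\boldsymbol{\widehat\xi}_i)$ gives
$$
\operatorname{MSE}(\z)=\|\z-\mmu_{\rm true}\|_2^2+2(\z-\mmu_{\rm true})^{\top}(\mmu_{\rm true}-\overline{\boldsymbol\xi})+\frac1N\sum_{i=1}^N\|\boldsymbol{\widehat\xi}_i-\mmu_{\rm true}\|_2^2 .
$$
The sample $\{\boldsymbol{\widehat\xi}_i\}$ is i.i.d. from $\mathbb P_{\rm true}$, which I assume lies in $\mathcal P_2(\mathbb R^m)$ so that $\sigma^2_{\rm true}<\infty$. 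Then the strong law of large numbers gives $\overline{\boldsymbol\xi}\to\mmu_{\rm true}$ almost surely, so the cross term vanishes. It also gives that the last average tends to $\mathbb E_{\mathbb P_{\rm true}}\|\boldsymbol{\tilde\xi}-\mmu_{\rm true}\|_2^2=\operatorname{tr}\Cov_{\mathbb P_{\rm true}}[\boldsymbol{\tilde\xi}]=\sigma^2_{\rm true}$ almost surely. Hence $W_2^2(\delta_{\z},\widehat{\mathbb P}_N)\to\theta^2+\sigma^2_{\rm true}$.

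The step needing care is the treatment of $\z$. If $\z$ is an estimator depending on $N$, say $\z=\overline{\boldsymbol\xi}$, the cross term still vanishes because $\z-\mmu_{\rm true}$ stays bounded while $\mmu_{\rm true}-\overline{\boldsymbol\xi}\to 0$. The limit should then be read as $\theta_N^2+\sigma^2_{\rm true}+o(1)$, or I would state the result for a fixed $\z$. I would also specify the mode of convergence as almost sure (or in probability) under the finite second moment assumption.

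Finally, $\sigma^2_{\rm true}=\mathbb E\|\boldsymbol{\tilde\xi}-\mmu_{\rm true}\|_2^2=0$ holds if and only if $\boldsymbol{\tilde\xi}=\mmu_{\rm true}$ almost surely, that is, if and only if $\mathbb P_{\rm true}$ is a Dirac measure. This gives the concluding claim about the deterministic case.
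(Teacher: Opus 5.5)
Your proof is correct and follows essentially the paper's route: identify $W_2^2(\delta_{\z},\widehat{\mathbb P}_N)$ with the empirical mean squared error, then combine the law of large numbers with the bias--variance expansion around $\mmu_{\rm true}$. The only difference is the order of steps. You expand sample-wise and kill the cross term via $\overline{\boldsymbol\xi}\to\mmu_{\rm true}$, whereas the paper first passes to $\mathbb E_{\mathbb P_{\rm true}}[\|\z-\boldsymbol{\tilde\xi}\|_2^2]$ and expands there. Your extra justifications are sound additions the paper leaves implicit: uniqueness of the coupling with a Dirac marginal, the finite second moment assumption, and the fact that $\sigma^2_{\rm true}=0$ exactly when $\mathbb P_{\rm true}$ is a Dirac measure.
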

\begin{proof}
In this case we have
$$
W_2^2(\delta_{\z}, \widehat{\mathbb P}_N) = \frac{1}{N}\sum_{i=1}^N\|\z - \boldsymbol{\widehat{\xi}}_i\|_2^2 \to \mathbb E_{\mathbb P_{\rm true}}[\|\z - \boldsymbol{\tilde \xi}\|_2^2]\quad \text{ as } N \to \infty\,,
$$
where the last part follows from the law of large numbers. Moreover,
$$
\begin{aligned}
   \mathbb E_{\mathbb P_{\rm true}}&[\|\z - \boldsymbol{\tilde \xi}\|_2^2] = \mathbb E_{\mathbb P_{\rm true}}[\|\z - \mmu_{\rm true} + \mmu_{\rm true} - \boldsymbol{\tilde \xi}\|_2^2]\\
   & = \mathbb E_{\mathbb P_{\rm true}}[\|\z - \mmu_{\rm true}\|_2^2  + 2(\z - \mmu_{\rm true})^{\top}(\mmu_{\rm true} - \boldsymbol{\tilde \xi}) + \|\mmu_{\rm true} - \boldsymbol{\tilde \xi}\|_2^2]\\
   &= \|\z -\mmu_{\rm true}\|_2^2 + \mathbb E_{\mathbb P_{\rm true}}[\|\mmu_{\rm true} - \boldsymbol{\tilde \xi}\|_2^2] = \theta^2 + \mathbb E_{\mathbb P_{\rm true}}[\|\mmu_{\rm true} - \boldsymbol{\tilde \xi}\|_2^2]\,.
\end{aligned}
$$
Finally, using the well known identity $\aa^{\top}\aa = \operatorname{tr}(\aa\aa^{\top})$ and the linearity of the trace and expectation operators we obtain
$$
\begin{aligned}
\theta^2 &+ \mathbb E_{\mathbb P_{\rm true}}[\|\mmu_{\rm true} - \boldsymbol{\tilde \xi}\|_2^2] = \theta^2 + \mathbb E_{\mathbb P_{\rm true}}[\operatorname{tr}((\mmu_{\rm true} - \boldsymbol{\tilde \xi})(\mmu_{\rm true} - \boldsymbol{\tilde \xi})^{\top})]\\
&= \theta^2 + \operatorname{tr}\mathbb E_{\mathbb P_{\rm true}}[(\mmu_{\rm true} - \boldsymbol{\tilde \xi})(\mmu_{\rm true} - \boldsymbol{\tilde \xi})^{\top}] =  \theta^2 + \sigma^2_{\rm true}\,.
\end{aligned}
$$
\end{proof}

\noindent The following proposition gives a closed form solution for the Wasserstein distance between two Gaussian distributions, see 
\cite{Olki82} or 
\cite{takatsu2011}. As mentioned above, the Wasserstein ball can also encapsulate information about higher moments.

\begin{proposition}\label{prop:gaussian-measures}
    If $\norm{\cdot} = \norm{\cdot}_2$ is the Euclidean distance and $p=2$, the 2-Wasserstein distance between Gaussian measures $\mathbb P = \mathcal N_m(\mmu, \boldsymbol{\Sigma})$ and $\mathbb P^{\prime} = \mathcal N_m(\mmu^{\prime}, \boldsymbol{\Sigma}^{\prime})$ is given by
$$
W_2^2(\mathbb P,\mathbb P^{\prime}) = \|\mmu - \mmu^{\prime}\|_2^2 + \operatorname{tr}\boldsymbol{\Sigma} + \operatorname{tr}\boldsymbol{\Sigma}^{\prime} - 2\operatorname{tr}\sqrt{\boldsymbol{\Sigma}^{1/2}\boldsymbol{\Sigma}^{\prime}\boldsymbol{\Sigma}^{1/2}}\,.
$$
\end{proposition}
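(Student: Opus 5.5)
Write $\Sb := \sSigma$ and $\Sb' := \sSigma'$ for the two covariance matrices. The plan has three steps: first reduce to the centered case, then derive a lower bound on the transport cost valid for every coupling, and finally exhibit a Gaussian coupling that attains the bound.

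\textbf{Step 1: reduction to the centered case.} For any coupling $\pi \in \Pi(\PP,\PP')$ with $(\boldsymbol{\xi},\boldsymbol{\xi'})\sim\pi$, expand
$$\EE\|\boldsymbol{\xi}-\boldsymbol{\xi'}\|_2^2=\|\mmu-\mmu'\|_2^2+\operatorname{tr}\Sb+\operatorname{tr}\Sb'-2\operatorname{tr}\Cb,$$
where $\Cb:=\EE[(\boldsymbol{\xi}-\mmu)(\boldsymbol{\xi'}-\mmu')\T]$ is the cross-covariance. The first three terms do not depend on $\pi$. Hence computing $W_2^2$ amounts to maximizing $\operatorname{tr}\Cb$ over all admissible cross-covariances.

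\textbf{Step 2: upper bound on $\operatorname{tr}\Cb$.} Any coupling yields a joint covariance
$$\begin{pmatrix}\Sb&\Cb\\ \Cb\T&\Sb'\end{pmatrix}\succeq 0.$$
Conversely, every such positive semidefinite block matrix is realized by a joint Gaussian coupling. The problem therefore becomes the semidefinite program
$$\max_{\Cb} \operatorname{tr}\Cb \quad\text{subject to}\quad \begin{pmatrix}\Sb&\Cb\\ \Cb\T&\Sb'\end{pmatrix}\succeq0.$$

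1. Assume first that $\Sb\succ0$; the general case follows by perturbing $\Sb$ to $\Sb+\varepsilon\Ib$ and using continuity of both sides.
2. By a Schur-complement argument, the block condition holds if and only if $\Cb=\Sb^{1/2}\Ab\,\Sb'^{1/2}$ for some contraction $\Ab$ with $\|\Ab\|_2\le1$.
3. Then $\operatorname{tr}\Cb=\operatorname{tr}(\Ab\,\Sb'^{1/2}\Sb^{1/2})$.
4. By von Neumann's trace inequality, this is at most the sum of the singular values of $\Sb'^{1/2}\Sb^{1/2}$, which equals $\operatorname{tr}\big(\Sb^{1/2}\Sb'\Sb^{1/2}\big)^{1/2}$.

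\textbf{Step 3: attainment.} Take the polar decomposition $\Sb'^{1/2}\Sb^{1/2}=\Ub\Mb$ with $\Mb=(\Sb^{1/2}\Sb'\Sb^{1/2})^{1/2}$ and $\Ub$ orthogonal, and choose $\Ab=\Ub\T$. Equivalently, use the linear map
$$\Tb:=\Sb^{-1/2}\big(\Sb^{1/2}\Sb'\Sb^{1/2}\big)^{1/2}\Sb^{-1/2}\succeq0,$$
which is symmetric and satisfies $\Tb\Sb\Tb=\Sb'$. The coupling $\boldsymbol{\xi'}=\mmu'+\Tb(\boldsymbol{\xi}-\mmu)$ then sends $\PP$ to $\PP'$, and it achieves $\operatorname{tr}\Cb=\operatorname{tr}(\Sb\Tb)=\operatorname{tr}\Mb$. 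This matches the bound from Step 2 and gives the stated formula. Note that the upper bound in Step 2 depends only on second moments, so Gaussianity is needed only to guarantee that the optimal coupling is admissible.

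The main obstacle is the trace maximization in Step 2: the Schur-complement parametrization together with von Neumann's inequality, and the handling of singular covariance matrices. A full proof is standard and can be found in \cite{Olki82,takatsu2011}, so in the paper one may simply cite those references.
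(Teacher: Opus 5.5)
Your proof is correct. It is also more than the paper offers here: the paper gives no argument for this proposition, only the formula with a reference to \cite{Olki82} and \cite{takatsu2011}.

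Your three steps are the standard derivation behind that citation. The reduction to maximizing $\operatorname{tr}\Cb$ over positive semidefinite joint covariance matrices is the trace-maximization formulation studied in \cite{Olki82}. You then bound it using the contraction factorization $\Cb=\Sb^{1/2}\Ab\,\Sb'^{1/2}$ and von Neumann's trace inequality, and you attain the bound with the symmetric map $\Tb$ satisfying $\Tb\Sb\Tb=\Sb'$.

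The citation buys brevity. Your route buys a self-contained proof and a slightly stronger statement. Steps 1 and 2 use only means and covariances, so they prove the lower bound $W_2^2(\PP,\PP')\ge\|\mmu-\mmu'\|_2^2+\operatorname{tr}\Sb+\operatorname{tr}\Sb'-2\operatorname{tr}(\Sb^{1/2}\Sb'\Sb^{1/2})^{1/2}$ for arbitrary $\PP,\PP'\in\mathcal P_2(\R^m)$ with these first two moments (Gelbrich's bound). Gaussianity enters only through attainment.

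Two small points if you write it out in full:
\begin{itemize}
\item For singular $\Sb'$, going from the Schur complement condition $\Sb'\succeq\Cb\T\Sb^{-1}\Cb$ to the contraction factorization needs Douglas' factorization lemma. Alternatively, add $\varepsilon\Ib$ to both diagonal blocks, which preserves positive semidefiniteness, prove the trace bound in the positive definite case, and let $\varepsilon\to0$.
\item Attainment needs no invertibility. Your polar choice $\Cb=\Sb^{1/2}\Ub\T\Sb'^{1/2}$ gives the joint covariance $\operatorname{diag}(\Sb^{1/2},\Sb'^{1/2})\bigl(\begin{smallmatrix}\Ib&\Ub\T\\ \Ub&\Ib\end{smallmatrix}\bigr)\operatorname{diag}(\Sb^{1/2},\Sb'^{1/2})\succeq0$, which a joint Gaussian realizes for any $\Sb,\Sb'$. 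So the perturbation of $\Sb$ inside $W_2$ can be dropped.
\end{itemize}
If you keep that perturbation, it is valid as stated: continuity of the left-hand side follows from the triangle inequality and $W_2(\mathcal N_m(\mmu,\Sb+\varepsilon\Ib),\mathcal N_m(\mmu,\Sb))\le\sqrt{m\varepsilon}$.
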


\noindent For more examples and a deeper discussion of the Wasserstein distance, we refer the reader to 
\cite{panaretos2019statistical} and 
\cite{peyre2019computational}.

\noindent In the special case where the function $f$ is linear in $\boldsymbol{\tilde\xi}$,  by linearity of expectation, we may write the inner maximization problem as

\begin{equation*}\sup_{\PP\in \mathbb B_{\theta,p} (\widehat\PP_N)} f(\x,\mathbb E_{\PP} [\boldsymbol{\tilde\xi}]) \,,
\end{equation*}
but it remains unclear how to resolve this in closed form. 


\subsection{Role of first-order moments under Wasserstein  ambiguity}
 We will now show that the set of first moments of all distributions in a Wasserstein ball $\mathbb B_{\theta,p}(\mathbb P^{\prime})$ around a given nominal distribution $\mathbb P^{\prime}\in \mathcal P_p(\mathbb R^m)$ with radius $\theta >0$ coincides precisely with the closed ball 
\begin{equation*}
    B_{\theta}(\w):= \lbrace \y \in \mathbb R^m: \norm{\y - \w}\leq \theta\rbrace\,
\end{equation*}
of same radius $\theta$, centered at $\w= \mathbb{E}_{\mathbb P^{\prime}}[\boldsymbol{\tilde{\xi}^{\prime}}]$. Denote by
    $$
 \mathcal{M}_{\theta, p}(\mathbb P^{\prime}):= \lbrace \y \in \mathbb R^m: \y =\mathbb{E}_{\mathbb{P}}[\tilde{\bm\xi}], \text{ for some } \mathbb P \in \mathbb B_{\theta,p}(\mathbb P^{\prime})\rbrace\, .
  $$

\begin{theorem}\label{expectation ball and wasserstein ball} Let $p \geq 1$, $\theta > 0$, $\norm{\cdot}$ be any norm on $\mathbb R^m$ and $\mathbb P^{\prime} \in \mathcal{P}_{p}(\mathbb{R}^{m})$, then we have $\mathcal{M}_{\theta, p}(\mathbb P^{\prime}) =  B_{\theta}(\mathbb{E}_{\mathbb P^{\prime}}[\boldsymbol{\tilde{\xi}^{\prime}}])$, independently of $p$.
\end{theorem}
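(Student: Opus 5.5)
The plan is to prove the two inclusions separately, writing $\w := \mathbb{E}_{\mathbb P^{\prime}}[\boldsymbol{\tilde{\xi}^{\prime}}]$, which is finite because $\mathbb P^{\prime}\in\mathcal P_p(\mathbb R^m)\subseteq \mathcal P_1(\mathbb R^m)$.

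\emph{Inclusion $\mathcal{M}_{\theta,p}(\mathbb P^{\prime})\subseteq B_\theta(\w)$.} Take $\mathbb P\in\mathbb B_{\theta,p}(\mathbb P^{\prime})$. As noted before the statement, $\mathbb P\in\mathcal P_p(\mathbb R^m)$, so its mean $\y=\mathbb E_{\mathbb P}[\tilde{\bm\xi}]$ exists. Fix any coupling $\pi\in\Pi(\mathbb P,\mathbb P^{\prime})$. We then have $\y-\w=\int(\bm\xi-\bm\xi^{\prime})\,\mathrm d\pi$. By the convexity of the norm (Jensen's inequality for Bochner integrals, or equivalently the dual-norm representation $\norm{\v}=\sup_{\norm{\u}_*\le1}\u^\top\v$ taken inside the integral), followed by Jensen/Hölder for $t\mapsto t^p$ with $p\ge1$,
\[
\norm{\y-\w}\le \int\norm{\bm\xi-\bm\xi^{\prime}}\,\mathrm d\pi\le\Bigl(\int\norm{\bm\xi-\bm\xi^{\prime}}^p\,\mathrm d\pi\Bigr)^{1/p}.
\]
Taking the infimum over $\pi$ gives $\norm{\y-\w}\le W_p(\mathbb P,\mathbb P^{\prime})\le\theta$. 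We do not need the infimum to be attained, since the bound holds for every coupling.

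\emph{Inclusion $B_\theta(\w)\subseteq\mathcal{M}_{\theta,p}(\mathbb P^{\prime})$.} Given $\y$ with $\norm{\y-\w}\le\theta$, set $\v:=\y-\w$ and define $\mathbb P$ as the push-forward of $\mathbb P^{\prime}$ under the translation $T(\bm\xi)=\bm\xi+\v$. Its mean is $\w+\v=\y$. The coupling $(\mathrm{id},T)_\#\mathbb P^{\prime}$ transports every point by exactly $\v$, so
\[
W_p(\mathbb P,\mathbb P^{\prime})\le\Bigl(\int\norm{\v}^p\,\mathrm d\mathbb P^{\prime}\Bigr)^{1/p}=\norm{\v}\le\theta .
\]
Hence $\mathbb P\in\mathbb B_{\theta,p}(\mathbb P^{\prime})$ and $\y\in\mathcal M_{\theta,p}(\mathbb P^{\prime})$. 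Neither inclusion uses the value of $p$ beyond $p\ge1$, which gives the independence of $p$.

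The main technical point is the vector-valued Jensen step $\norm{\int\mathbf g\,\mathrm d\pi}\le\int\norm{\mathbf g}\,\mathrm d\pi$ for an arbitrary norm. I would justify it via duality: for any $\u$ with $\norm{\u}_*\le1$ we have $\u^\top\int\mathbf g\,\mathrm d\pi=\int\u^\top\mathbf g\,\mathrm d\pi\le\int\norm{\mathbf g}\,\mathrm d\pi$, and taking the supremum over such $\u$ gives the claim. Integrability of $\mathbf g=\bm\xi-\bm\xi^{\prime}$ follows from both marginals having finite first moments.
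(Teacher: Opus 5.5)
Your proposal is correct and matches the paper's proof step for step. One inclusion uses Jensen for the norm and for $t\mapsto t^p$ under an arbitrary coupling, then takes the infimum over couplings; the other uses the translation push-forward $T(\bm\xi)=\bm\xi+(\y-\w)$ with its induced coupling. Your dual-norm justification of the vector-valued Jensen step and your remarks on integrability supply details the paper leaves implicit.
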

\begin{proof}
In order to show that $\mathcal{M}_{\theta, p}(\mathbb P^{\prime}) \subseteq  B_{\theta}(\mathbb{E}_{\mathbb P^{\prime}}[\boldsymbol{\tilde{\xi}^{\prime}}])$, take $\y \in \mathcal{M}_{\theta, p}(\mathbb P^{\prime})$ arbitrary. This means that there exists a $\mathbb{P} \in \mathcal{P}(\mathbb{R}^{m})$ satisfying $W_{p}(\mathbb{P}, \mathbb P^{\prime}) \leq \theta$ and such that $\y = \mathbb{E}_{\mathbb P}[\boldsymbol{\tilde{\xi}}]$. Consider a random pair $(\boldsymbol{\tilde{\xi}},\boldsymbol{\tilde{\xi}^{\prime}})$ having joint distribution $\pi \in \Pi(\mathbb{P}, \mathbb{P}^{\prime})$ and set $\w = \mathbb{E}_{\mathbb P^{\prime}}[\boldsymbol{\tilde{\xi}^{\prime}}]$. Using Jensen's inequality for the convex function $t\mapsto t^p$, we obtain
\begin{equation}\label{expectation ball and wasserstein ball equ 1}
        \| \y - \w \|^p = \| \mathbb{E}_{\pi}[\boldsymbol{\tilde{\xi}} - \boldsymbol{\tilde{\xi}^{\prime}}] \|^p
    \leq \mathbb{E}_{\pi}[\| \boldsymbol{\tilde{\xi}} - \boldsymbol{\tilde{\xi}^{\prime}}\|]^p
        \leq   \mathbb{E}_{\pi}[\| \boldsymbol{\tilde{\xi}} - \boldsymbol{\tilde{\xi}^{\prime}} \|^{p}]  \, .
    \end{equation}
Taking the infimum on both sides over $\pi\in\Pi(\mathbb{P}, \mathbb P^{\prime})$ of~\eqref{expectation ball and wasserstein ball equ 1}, we can see that
    \begin{equation*}
        \| \y - \w \| \leq W_{p}(\mathbb{P}, \mathbb P^{\prime}) \leq \theta\, ,
    \end{equation*}
which means that $\mathcal{M}_{\theta, p}(\mathbb P^{\prime}) \subseteq  B_{\theta}(\mathbb{E}_{\mathbb P^{\prime}}[\boldsymbol{\tilde{\xi}^{\prime}}])$. To see the reverse inclusion, let $\w = \mathbb{E}_{\mathbb P^{\prime}}[\boldsymbol{\tilde{\xi}^{\prime}}]$ and take any $\y \in \mathbb R^m$ satisfying $\norm{\y - \w} \leq \theta$. Put $\bm\kappa := \y - \w$ and define a translation distribution of $\mathbb P^{\prime}$ as a push-forward measure as follows:
\begin{equation}\label{expectation ball and wasserstein ball equ 2}
        \mathbb{P} := T_{\#} \mathbb P^{\prime}, \text{ with } T(\z) := \z + \bm\kappa \text{ for }  \z \in \mathbb{R}^{m}.
    \end{equation}
    Using \eqref{expectation ball and wasserstein ball equ 2}, we can see that
    \begin{equation*}
        \mathbb{E}_{\mathbb{P}}[\boldsymbol{\tilde{\xi}}] = \mathbb{E}_{\mathbb P^{\prime}}[\boldsymbol{\tilde{\xi}^{\prime}}] + \bm\kappa = \y.
    \end{equation*}
    Thus, we can obtain for the particular joint law $\pi \in \Pi(\mathbb{P}, \mathbb P^{\prime})$ governing  $(\boldsymbol{\tilde{\xi}} , \boldsymbol{\tilde{\xi}^{\prime}})$ under $\mathbb P^{\prime}$ with $\boldsymbol{\tilde{\xi}} = T(\boldsymbol{\tilde{\xi}^{\prime}})$,
    \begin{equation*}
        W^{p}_{p}(\mathbb{P}, \mathbb P^{\prime}) \leq \mathbb{E}_{\pi}[\| \boldsymbol{\tilde{\xi}}- \boldsymbol{\tilde{\xi}^{\prime}} \|^{p}]= \| \bm\kappa \|^{p} = \norm{\y - \w} ^{p} \leq \theta^{p}\, ,
    \end{equation*}
    which implies $\y \in \mathcal{M}_{\theta, p}(\mathbb P^{\prime})$.
\end{proof}

\begin{remark}
    In Theorem~\ref{expectation ball and wasserstein ball} we observe that the choice of $p\geq 1$ is irrelevant, as long as the $p$-th moment exists for the expected cost function under $\mathbb{P}^{\prime}$. The only thing that matters is that the norm used in the set of first moments and in the closed ball is the same.
\end{remark}

\noindent If $f$ is linear in $\boldsymbol{\tilde \xi}$, Theorem~\ref{expectation ball and wasserstein ball} becomes particularly relevant in determining the worst-case distribution, i.e., the solution to the inner ``$\sup$" problem. To this end, recall that for a norm $\norm{\cdot}$ on $\mathbb R^m$ the dual norm, denoted by $\norm{\cdot}^*$ is given as
$$
\norm{\z}^*:=\sup\{\z^{\top}\y: \norm{\y} \leq 1\}\,.
$$
The following theorem can be found in the literature, see 
 e.g., {~\cite{gao2023distributionally}}. We will give a simple proof based upon first-moment observations.

\begin{theorem}\label{dual norm regularization} Let $p \geq 1$, $\theta > 0$, and let $\norm{\cdot}$ be any norm on $\mathbb R^m$ as well as $\mathbb P^{\prime} \in \mathcal{P}_{p}(\mathbb{R}^{m})$.
Suppose $f(\x,\boldsymbol{\tilde \xi}) = \h(\x)\T\boldsymbol{\tilde \xi}$ is linear in $\bm \xi$. Consider any fixed $\x$ and denote by $\cc:=\h(\x)\in \R^m$. Then, for the inner problem we have
$$
\sup_{\mathbb P \in \mathbb B_{\theta,p}(\mathbb P^{\prime})} \mathbb E_{\mathbb P}[\cc^{\top}\boldsymbol{\tilde{\xi}}] = \cc^{\top}\mathbb E_{\mathbb P^{\prime}}[\boldsymbol{\tilde{\xi}^{\prime}}] + \theta \|\cc\|^*\,.
$$
\end{theorem}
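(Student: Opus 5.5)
Since the integrand is linear in $\boldsymbol{\tilde\xi}$, the plan is to reduce the supremum over measures to a supremum over first moments, and then invoke Theorem~\ref{expectation ball and wasserstein ball}.

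\textbf{Step 1: linearity.} For every $\mathbb P \in \mathbb B_{\theta,p}(\mathbb P^{\prime})\subseteq \mathcal P_p(\mathbb R^m)\subseteq\mathcal P_1(\mathbb R^m)$ the mean exists. By linearity of expectation, $\mathbb E_{\mathbb P}[\cc^{\top}\boldsymbol{\tilde\xi}] = \cc^{\top}\mathbb E_{\mathbb P}[\boldsymbol{\tilde\xi}]$. Hence the inner supremum equals
\[
\sup_{\y\in\mathcal M_{\theta,p}(\mathbb P^{\prime})} \cc^{\top}\y .
\]

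\textbf{Step 2: characterize the moment set.} By Theorem~\ref{expectation ball and wasserstein ball} we have $\mathcal M_{\theta,p}(\mathbb P^{\prime}) = B_\theta(\w)$ with $\w:=\mathbb E_{\mathbb P^{\prime}}[\boldsymbol{\tilde\xi}^{\prime}]$. Writing $\y=\w+\theta\u$ with $\norm{\u}\le 1$, the supremum becomes
\[
\cc^{\top}\w+\theta\sup\{\cc^{\top}\u:\norm{\u}\le1\} = \cc^{\top}\w+\theta\norm{\cc}^*,
\]
which follows directly from the definition of the dual norm.

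\textbf{Step 3: attainment (optional).} The unit ball is compact in finite dimension, so there is a $\u^*$ with $\norm{\u^*}\le1$ and $\cc^{\top}\u^*=\norm{\cc}^*$. The translated measure $T_\#\mathbb P^{\prime}$ with shift $\theta\u^*$, as constructed in the proof of Theorem~\ref{expectation ball and wasserstein ball}, then attains the supremum. This gives an explicit worst-case distribution.

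\textbf{Main obstacle.} There is essentially no obstacle here; the only care needed is in Step 1. One must justify that the expectation is finite for all $\mathbb P$ in the ball, which follows from the Young-inequality remark showing $\mathbb B_{\theta,p}(\mathbb P^{\prime})\subseteq\mathcal P_p$. One must also check that the reduction from measures to means is an exact equality rather than an inequality. This holds because every point of $\mathcal M_{\theta,p}(\mathbb P^{\prime})$ is, by definition, realized as the mean of some measure in the ball, and conversely.
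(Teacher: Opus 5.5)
Your proposal is correct and follows the paper's proof essentially verbatim. Both use linearity together with Theorem~\ref{expectation ball and wasserstein ball} to reduce the supremum to $\sup_{\y \in B_\theta(\mathbb E_{\mathbb P^{\prime}}[\boldsymbol{\tilde{\xi}^{\prime}}])} \cc^{\top}\y$, then rescale to the unit ball and apply the definition of the dual norm. Your optional Step~3 goes slightly further: it shows that for any norm the supremum is attained by a translate of $\mathbb P^{\prime}$, which the paper makes explicit only for the Euclidean case in Theorem~\ref{thm:linear-loss}.
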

\begin{proof}
    Theorem \ref{expectation ball and wasserstein ball} gives
    \begin{equation*}
           \sup_{\mathbb P \in \mathbb B_{\theta,p}(\mathbb P^{\prime})} \mathbb E_{\mathbb P}[\cc^{\top}\boldsymbol{\tilde{\xi}}] = \sup_{\y \in B_{\theta}(\mathbb E_{\mathbb P^{\prime}}[\boldsymbol{\tilde{\xi}^{\prime}}])} \cc^{\top}\y \, .
\end{equation*}
Putting $\w=(\y - \mathbb E_{\mathbb P^{\prime}}[\boldsymbol{\tilde{\xi}^{\prime}}])/\theta$, we
see that the latter expression equals
$$
\sup \{\cc^{\top}(\theta\w +\mathbb E_{\mathbb P^{\prime}}[\boldsymbol{\tilde{\xi}^{\prime}}]) : \|\w\|\leq 1, \w \in \mathbb R^m\}
= \cc^{\top}\mathbb E_{\mathbb P^{\prime}}[\boldsymbol{\tilde{\xi}^{\prime}}] + \theta \|\cc\|^* 
$$
by definition of the dual norm.
\end{proof}

\begin{remark} 
As a consequence, under linearity of $f$ in $\boldsymbol{\tilde \xi}$, the {\em a priori} infinite-dimensional minimax problem given by (DRO) and its ambiguity set $\mathbb B_{\theta,p}(\mathbb P^{\prime})$ can be reduced to a finite-dimensional robust minimax problem with uncertainty set $B_{\theta}(\mathbb{E}_{\mathbb P^{\prime}}[\boldsymbol{\tilde{\xi}^{\prime}}])$. 
It is further worth noting that Theorem \ref{dual norm regularization} depicts that an infinite dimensional linear DRO problem can be reformulated as a deterministic linear problem with a dual norm regularization term. There are many common pairs of dual norms in the Euclidean space $\mathbb R^m$. For the $\ell_p$-norms $\norm{\cdot}_p$ we have dual pairs $(\| \cdot \|_{1}, \| \cdot \|_{\infty})$ and $(\| \cdot \|_{p}, \| \cdot \|_{q})$ where $1<p,q < \infty$ are H\"older conjugate numbers satisfying $1/p + 1/q = 1$.
\end{remark}

\noindent Before we proceed, let us address the question when a minimax theorem can hold. 

\begin{theorem}\label{thm:maximal-element} 
Suppose that $f(\x,\boldsymbol{\tilde \xi}) = \h(\x)\T\bm\xi$ is linear in $\boldsymbol{\tilde \xi}$.
Under the assumptions of Theorem~\ref{expectation ball and wasserstein ball}, if there exists a $\z \in \mathcal{M}_{\theta,p}(\mathbb P^{\prime})$ such that
\begin{equation}\label{relation between range of h and normal cone of M}
    \lbrace \h(\x): \x \in \mathcal{X} \rbrace \subset \mathcal{N}_{\mathcal{M}_{\theta,p}(\mathbb P^{\prime})}(\z)\,,
\end{equation}
then $\z$ is a $\mathcal X$-maximal element in the sense that
\begin{equation}\label{maxel}
\h(\x)^{\top}\z \geq \h(\x)^{\top}\y\quad \mbox{for all }
    (\x,\y)\in \mathcal X \times\mathcal{M}_{\theta,p}(\mathbb P^{\prime})\,,
    \end{equation}
and \eqref{WDRO} is reduced to a deterministic instance of it:
\[
\inf_{\x \in \mathcal X} \sup_{\mathbb P \in \mathbb B_{\theta,p}(\mathbb P^{\prime})} \mathbb E_{\mathbb P}[\h(\x)^{\top}\boldsymbol{\tilde \xi}] =  \inf_{\x \in \mathcal X} \h(\x)^{\top}\z\,.
\]
In particular, $\z = \mathbb E_{\mathbb P^*}[\boldsymbol{\tilde \xi}]$ for some reference distribution $\PP^*$ residing in the ambiguity set $\mathbb B_{\theta,p}(\mathbb P^{\prime})$.
\end{theorem}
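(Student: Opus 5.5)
The plan is to unwind the definition of the normal cone and then reuse the first-moment reduction behind Theorem~\ref{dual norm regularization}. For the maximality claim~\eqref{maxel}, fix $\x\in\mathcal X$. Then $\v:=\h(\x)$ lies in $\mathcal N_{\mathcal M_{\theta,p}(\mathbb P')}(\z)$ by~\eqref{relation between range of h and normal cone of M}. By the definition~\eqref{normal cone of M set} of the normal cone, this means $\v^\top\z\ge\v^\top\y$ for all $\y\in\mathcal M_{\theta,p}(\mathbb P')$. That is exactly~\eqref{maxel}, so nothing beyond the definition is needed here.

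Next, the reduction. For fixed $\x$, linearity of $f$ and linearity of expectation (as in Remark~\ref{rem1}) give
\[
\sup_{\mathbb P\in\mathbb B_{\theta,p}(\mathbb P')}\mathbb E_{\mathbb P}[\h(\x)^\top\tilde{\bm\xi}]=\sup_{\y\in\mathcal M_{\theta,p}(\mathbb P')}\h(\x)^\top\y .
\]
This holds by the very definition of $\mathcal M_{\theta,p}(\mathbb P')$ as the set of means of members of the ball. Every $\mathbb P$ in the ball lies in $\mathcal P_p\subseteq\mathcal P_1$, so all these expectations are finite. The supremum is bounded above by $\h(\x)^\top\z$ by~\eqref{maxel}. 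Since $\z\in\mathcal M_{\theta,p}(\mathbb P')$, the supremum is attained at $\z$ and equals $\h(\x)^\top\z$. Taking the infimum over $\x\in\mathcal X$ yields the asserted identity.

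For the final claim, $\z\in\mathcal M_{\theta,p}(\mathbb P')$ means by definition that $\z=\mathbb E_{\mathbb P^*}[\tilde{\bm\xi}]$ for some $\mathbb P^*\in\mathbb B_{\theta,p}(\mathbb P')$. Theorem~\ref{expectation ball and wasserstein ball} identifies $\mathcal M_{\theta,p}(\mathbb P')$ with the norm ball $B_\theta(\mathbb E_{\mathbb P'}[\tilde{\bm\xi}'])$. Its proof even gives an explicit choice: the translate $\mathbb P^*=T_\#\mathbb P'$ with $\bm\kappa=\z-\mathbb E_{\mathbb P'}[\tilde{\bm\xi}']$. That same $\mathbb P^*$ is then a common worst-case distribution for all $\x$.

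There is no serious obstacle. The only point needing care is justifying that the supremum over distributions equals the supremum over their means, which requires integrability of $\tilde{\bm\xi}$ under every $\mathbb P$ in the ball. This follows from $\mathbb B_{\theta,p}(\mathbb P')\subseteq\mathcal P_p(\mathbb R^m)$, which the paper established via Young's inequality.
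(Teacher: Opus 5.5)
Your proof is correct and follows the paper's route. As in the paper, \eqref{maxel} is just the definition of the normal cone. For the reduction the paper only cites \cite[Lemma~1]{bomze2021trust}, whereas you verify it directly: you rewrite the inner supremum as $\sup_{\y\in\mathcal M_{\theta,p}(\mathbb P')}\h(\x)^\top\y$ and note that it is attained at $\z$. This makes the argument self-contained, and the translate $T_{\#}\mathbb P'$ gives an explicit worst-case distribution that does not depend on $\x$, which is what yields the zero minimax gap.
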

\begin{proof} From the definition of the normal cone, we can see that conditions \eqref{relation between range of h and normal cone of M} and \eqref{maxel} are equivalent. As shown in ~\cite[Lemma~1]{bomze2021trust} for the robust StQP, the minimax theorem holds and again, problem ~\eqref{WDRO} is reduced to the given deterministic instance.
\end{proof}

\noindent Using the smoothness of $\partial \mathcal{M}_{\theta, p}(\mathbb P^{\prime})$, we can answer the question whether property~\eqref{maxel} can hold. Recall that different norms lead to different smooth or non-smooth boundary of $\mathcal{M}_{\theta, p}(\mathbb P^{\prime})$.
For instance, the ball $B_{\theta}(\mathbb{E}_{\mathbb P^{\prime}}[\boldsymbol{\tilde{\xi}^{\prime}}])$ has a smooth boundary, if the chosen norm is an $\ell_p$-norm $\norm{\cdot}_p$ with $1<p<\infty$. This smooth boundary would determine $\h(\x)$ in~\eqref{maxel} uniquely up to a positive scaling. Therefore, in this case there is little chance to satisfy~\eqref{maxel} unless $\h(\mathcal X)$ is one-dimensional and normal to the tangent hyperplane of $B_{\theta}(\mathbb{E}_{\mathbb P^{\prime}}[\boldsymbol{\tilde{\xi}^{\prime}}])$ at $\z$. As a consequence, we obtain the following result. 

\begin{corollary}\label{no h satisfy the inclusion}
    Let $\mathcal{X} \subset \mathbb{R}^{n}$, for $n > 1$, and assume that $\| \cdot \|$ is a smooth norm in $\mathbb{R}^{m}$. Let the assumptions of Theorem \ref{expectation ball and wasserstein ball} hold, then the only mappings $\h : \mathcal{X} \to \mathbb{R}^{m}$ satisfying \eqref{relation between range of h and normal cone of M} must be of the form $\h(\x) = \varphi(\x)\cc$ for some constant $\cc\in \R^m$ and some (deterministic) scalar function $\varphi : \mathcal X \to \R$. 
\end{corollary}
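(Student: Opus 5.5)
By Theorem~\ref{expectation ball and wasserstein ball}, I will replace $\mathcal{M}_{\theta,p}(\mathbb P^{\prime})$ by the closed ball $B_\theta(\w)$ with $\w=\mathbb E_{\mathbb P^{\prime}}[\boldsymbol{\tilde{\xi}^{\prime}}]$. The task then becomes describing the normal cone of a norm ball at a point $\z$ and showing that, for a smooth norm, this cone is always a single ray, possibly together with the origin.

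First I treat $\z$ in the interior of $B_\theta(\w)$. If $\v\in\mathcal N_{B_\theta(\w)}(\z)$ with $\v\neq \mathbf 0$, I choose $\epsilon>0$ so small that $\z+\epsilon\v\in B_\theta(\w)$. The normal cone inequality then gives $\v^\top\z\ge \v^\top\z+\epsilon\|\v\|_2^2$, which is impossible. So $\mathcal N(\z)=\{\mathbf 0\}$, and \eqref{relation between range of h and normal cone of M} forces $\h\equiv \mathbf 0$. This is the required form with $\varphi\equiv 0$ (any $\cc$ works).

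Next I treat $\z\in\partial B_\theta(\w)$, so $\|\z-\w\|=\theta$. Define $g(\y):=\|\y-\w\|$, a convex function that is differentiable away from $\w$ because the norm is smooth. The ball is the sublevel set $\{g\le\theta\}$, and Slater's condition holds since $g(\w)=0<\theta$. The standard formula for the normal cone of a convex sublevel set then gives
\[
\mathcal N_{B_\theta(\w)}(\z)=\mathrm{cone}\bigl(\partial g(\z)\bigr)=\{\lambda\nabla g(\z):\lambda\ge 0\}.
\]
The second equality uses that smoothness makes $\partial g(\z)$ the singleton $\{\nabla g(\z)\}$, and $\nabla g(\z)\neq\mathbf 0$ since $g$ is not minimized at $\z$. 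Setting $\cc:=\nabla g(\z)$, the inclusion \eqref{relation between range of h and normal cone of M} says that for every $\x\in\mathcal X$ there is a $\lambda(\x)\ge 0$ with $\h(\x)=\lambda(\x)\cc$. Taking $\varphi:=\lambda$ gives the claimed form; in fact $\varphi\ge 0$.

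The main obstacle is the boundary step, namely identifying the normal cone with a single ray. Here I must make precise what ``smooth norm'' means: Gâteaux differentiability at every nonzero point, equivalently a unique supporting hyperplane at each boundary point of the unit ball. I would argue this step directly, without invoking the sublevel-set formula. Suppose $\v\ne\mathbf 0$ lies in the normal cone. Then $\{\y:\v^\top\y=\v^\top\z\}$ is a supporting hyperplane to the ball at $\z$. By smoothness the supporting hyperplane is unique, so $\v$ must be a positive multiple of the unique dual-norm-attaining functional $\cc$, that is, $\cc^\top(\z-\w)=\theta\|\cc\|^*$.

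Finally, the hypothesis $n>1$ only indicates that $\mathcal X$ is genuinely multidimensional, so the conclusion is a real restriction on $\h$; it plays no role in the argument itself.
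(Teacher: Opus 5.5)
Your proof is correct and follows the paper's route: you use Theorem~\ref{expectation ball and wasserstein ball} to replace $\mathcal{M}_{\theta,p}(\mathbb P^{\prime})$ by the norm ball $B_\theta(\w)$, and then use that under a smooth norm the normal cone at a boundary point is a single ray. You also fill in details the paper's one-line proof leaves implicit: the interior case (the normal cone is $\{\mathbf 0\}$, which gives $\varphi\equiv 0$) and an explicit justification of one-dimensionality through the subdifferential of $\y\mapsto\|\y-\w\|$.
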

\begin{proof}
    The claim follows immediately from Theorem~\ref{expectation ball and wasserstein ball} since the normal cone at each point in $\partial \mathcal{M}_{\theta, p}(\mathbb P^{\prime})$ is one-dimensional under a smooth norm.
\end{proof}

\noindent We see that this sufficient condition for a minimax theorem is unlikely for many $\h$, and indeed, we will show below that the minimax inequality is strict for the WDRO variant of the uncertain StQP where $\h$ is not of the form discussed in Corollary~\ref{no h satisfy the inclusion}. In Appendix \ref{sec:AppendixA2-l-1-norm}, the same will be shown true for this problem class under the non-smooth $\ell_1$-norm. However, for some non-smooth norms like the $\ell_{\infty}$-norm the situation is different, see Appendix \ref{sec:AppendixA1-l-infty-norm}.

\noindent Returning to the general discussion, we will show that for the special case of the Euclidean norm and linear objective function, the worst-case distribution exists and is given by a constant shift of every element of the reference distribution. This result generalizes~\cite[Proposition 4.3]{lanzetti2025first}, where the authors obtained the worst-case formulation for a linear functional under $p = 2$ and absolutely continuous reference measures. In contrast, we extend the results to cases $p \geq 1$ and under any reference measure $\mathbb P^{\prime} \in \mathcal{P}_{p}(\mathbb{R}^{m})$.

\begin{theorem}\label{thm:linear-loss}
    Let $p\geq 1$, $\theta > 0$, $\norm{\cdot} = \norm{\cdot}_2$ be the Euclidean norm on $\mathbb R^m$, $\mathbb P^{\prime} \in \mathcal P_p(\mathbb R^m)$ and $\boldsymbol{0}\neq \cc \in \mathbb R^m$, then
\begin{equation*}
\sup_{\mathbb P \in \mathbb B_{\theta,p} (\mathbb P^{\prime})} \mathbb E_{\mathbb P}[\cc^{\top}\boldsymbol{\tilde \xi }] = \cc^{\top}\mathbb E_{\mathbb P^{\prime}}[\boldsymbol{\tilde{\xi}^{\prime}}] + \theta \norm{\cc}_2\,,
\end{equation*}
and the worst-case distribution is given by
$$
\mathbb P^* = T_{\#}\mathbb P^{\prime} \quad \text{ with } \quad
T(\z) = \z + \frac{\theta}{\norm{\cc}_2}\,\cc\,.
$$
\end{theorem}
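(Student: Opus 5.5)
The value of the supremum follows directly from Theorem~\ref{dual norm regularization}. The Euclidean norm is self-dual, so $\norm{\cc}_2^* = \norm{\cc}_2$, and that theorem, applied with $\norm{\cdot}=\norm{\cdot}_2$, gives
$$
\sup_{\mathbb P \in \mathbb B_{\theta,p}(\mathbb P^{\prime})} \mathbb E_{\mathbb P}[\cc^{\top}\boldsymbol{\tilde\xi}] = \cc^{\top}\mathbb E_{\mathbb P^{\prime}}[\boldsymbol{\tilde{\xi}^{\prime}}] + \theta\norm{\cc}_2\,.
$$
Self-duality itself is just Cauchy--Schwarz, $\cc^{\top}\y\le \norm{\cc}_2\norm{\y}_2$, with equality at $\y=\cc/\norm{\cc}_2$.

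What remains is to show that the supremum is attained by $\mathbb P^*=T_{\#}\mathbb P^{\prime}$. I plan to reuse the translation construction from the reverse inclusion in the proof of Theorem~\ref{expectation ball and wasserstein ball}, with $\bm\kappa := \theta\cc/\norm{\cc}_2$.

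\textbf{Membership in the ball.} I couple $\boldsymbol{\tilde\xi}^{\prime}\sim\mathbb P^{\prime}$ with $\boldsymbol{\tilde\xi}=T(\boldsymbol{\tilde\xi}^{\prime})$. This coupling is admissible, so
$$
W_p^p(\mathbb P^*,\mathbb P^{\prime})\le \mathbb E\bigl[\norm{\bm\kappa}_2^p\bigr]=\theta^p\,,
$$
and hence $\mathbb P^*\in\mathbb B_{\theta,p}(\mathbb P^{\prime})$. Since $\mathbb P^{\prime}\in\mathcal P_p(\mathbb R^m)$, the translate $\mathbb P^*$ also has a finite $p$-th moment, so all expectations are well defined.

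\textbf{Attainment.} By linearity of expectation under the push-forward,
$$
\mathbb E_{\mathbb P^*}[\cc^{\top}\boldsymbol{\tilde\xi}] = \cc^{\top}\mathbb E_{\mathbb P^{\prime}}[\boldsymbol{\tilde\xi}^{\prime}] + \cc^{\top}\bm\kappa = \cc^{\top}\mathbb E_{\mathbb P^{\prime}}[\boldsymbol{\tilde\xi}^{\prime}] + \theta\norm{\cc}_2\,.
$$
This equals the supremum computed above, so $\mathbb P^*$ is a worst-case distribution.

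There is no real obstacle here; the only point needing care is the general setting. We assume only $p\ge 1$ and an arbitrary $\mathbb P^{\prime}\in\mathcal P_p(\mathbb R^m)$, with no absolute continuity. This is harmless because the upper bound rests on Jensen's inequality in Theorem~\ref{expectation ball and wasserstein ball}, and the lower bound uses an explicit coupling rather than an optimal transport map. Neither step needs Brenier-type regularity, which is exactly what lets us go beyond \cite[Proposition 4.3]{lanzetti2025first}.

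Optionally, I would add a remark on uniqueness of the mean: any maximizer must have mean $\mathbb E_{\mathbb P^{\prime}}[\boldsymbol{\tilde\xi}^{\prime}]+\bm\kappa$, by strict convexity of the Euclidean ball. I would not claim uniqueness of the distribution itself.
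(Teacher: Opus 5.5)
Your proposal is correct and follows the paper's proof: the value comes from Theorem~\ref{dual norm regularization} via self-duality of $\norm{\cdot}_2$, and optimality of $\mathbb P^*$ comes from the translation coupling plus linearity of expectation. Your bound $W_p^p(\mathbb P^*,\mathbb P^{\prime})\le\theta^p$ via the explicit coupling is slightly cleaner than the paper's displayed equality, and it is all that feasibility requires.
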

\begin{proof}
    The first part follows from Theorem \ref{dual norm regularization} since the Euclidean norm satisfies $\norm{\cdot}_2^* = \norm{\cdot}_2$. It is easy to see that the worst-case distribution $\mathbb P^*$ is feasible, since
$$
\begin{aligned}
W_p^p&(\mathbb P^*, \mathbb P^{\prime}) = \inf_{\pi \in \Pi(\mathbb P^*, \mathbb P^{\prime})} \int_{\mathbb R^m \times \mathbb R^m}\|\boldsymbol{\xi}- \boldsymbol{\xi^{\prime}}\|^p \, \mathrm{d} \pi (\boldsymbol{\xi},\boldsymbol{\xi^{\prime}})\\
&= \inf_{\pi \in \Pi(\mathbb P^*, \mathbb P^{\prime})} \int_{\mathbb R^m \times \mathbb R^m}\|\boldsymbol{\xi^{\prime}} + \tfrac{\theta}{\norm{\cc}_2}\cc- \boldsymbol{\xi^{\prime}}\|^p \, \mathrm{d} \pi (\boldsymbol{\xi},\boldsymbol{\xi^{\prime}}) = \theta^p\,.
\end{aligned}
$$
Finally, we will show that $\mathbb P^*$ is optimal
$$
\mathbb E_{\mathbb P^*}[\cc^{\top}\boldsymbol{\tilde \xi }] = \cc^{\top}\mathbb E_{\mathbb P^*} [\boldsymbol{\tilde{\xi}}] = \cc^{\top}\mathbb E_{\mathbb P^{\prime}} [\boldsymbol{\tilde{\xi}^{\prime}} + \tfrac{\theta}{\norm{\cc}_2}\cc] = \cc^{\top}\mathbb E_{\mathbb P^{\prime}}[\boldsymbol{\tilde{\xi}^{\prime}}] + \theta \norm{\cc}_2\,.
$$
\end{proof}

\section{Distributionally robust StQP}\label{sec:3 DRStQP}

\noindent In this section we will discuss the \eqref{WDRO} version of an uncertain StQP.

\begin{definition}
  {\em Let $\widetilde \Qb \in {\mathcal S}^{n}$ be a random symmetric matrix that follows an unknown distribution $\mathbb P_{\rm true}$ and let $\{\widehat{\Qb}_i\}_{i=1}^N$ be a sequence of independent and identically distributed samples drawn according to~$\mathbb P_{\rm true}$. Consider the reference (empirical) distribution
    $\widehat\PP_N = \frac{1}{N}\sum_{i=1}^N \delta_{\widehat{\Qb}_i}$
 generated by this sample, which in turn defines the ambiguity set $\mathbb B_{\theta,p} (\widehat\PP_N)$, the $p$-Wasserstein ball around $\widehat\PP_N$ with radius $\theta$ under the matrix norm $\norm{\cdot}$. Then the Distributionally Robust Standard Quadratic Optimization Problem (DRStQP) is defined by the problem}

\begin{equation}\label{DRStQP}\tag{DRStQP}
  \underset{\x \in \Delta}{\vphantom{\sup}\inf} \, \underset{\PP \in \mathbb B_{\theta,p} (\widehat\PP_N)}{\sup} \,\mathbb E_{\PP}[\x^{\top}\widetilde \Qb\x]\,.
\end{equation}
\end{definition}

\noindent Observe that even though we defined the Wasserstein distance for random vectors, it can be adapted to incorporate random matrices instead of random vectors, in the following way. 
The well-known symmetric vectorization operator $\operatorname{svec}:\mathcal S^n \to \mathbb R^m$ collects all entries on and above the diagonal of symmetric matrices in $\mathcal S^n$ in vectors in $\mathbb R^m$ with $m = \tfrac{n(n+1)}{2}$. For the ease of presentation, we will assume that for every symmetric matrix $\Ab = (a_{ij}) \in \mathcal S^{n}$, the entries of its symmetric vectorization $\operatorname{svec}(\Ab)$ are arranged in the following way
$$
\operatorname{svec}(\Ab) = (a_{11},a_{22},\dots,a_{nn},\sqrt{2}a_{12},\sqrt{2}a_{13},\dots,\sqrt{2}a_{n-1,n})^{\top}\,,
$$
i.e. the first $n$ entries of $\operatorname{svec}(\Ab)$ correspond to the diagonal entries of $\Ab$ and the last \sloppy $\frac{n(n+1)}{2} - n = \frac{n(n-1)}{2}$ entries of $\operatorname{svec}(\Ab)$ correspond to the off-diagonal entries of $\Ab$ multiplied with the factor $\sqrt{2}$. Since off-diagonal entries are multiplied by $\sqrt 2$, this mapping is an isometry in the sense that 
$$
\langle \Ab , \Bb \rangle_F= 
\svec(\Ab)\T\svec(\Bb)\,,
$$
the rightmost expression being the standard inner product on $\mathbb R^m$. In particular this mapping is a linear bijection. This way all the above settings can be easily transferred from $\R^m$ to ${\mathcal S}^n$; in particular, the ambient space of all distributions $\mathbb P$ should be ${\mathcal S}^n$.

\noindent With these observations and using the objective 
$$f(\x,\widetilde \Qb)= \h(\x)\T\boldsymbol{\tilde \xi}\quad \mbox{with} \quad\h(\x)=\svec (\x\x\T) \quad\mbox{and}\quad \boldsymbol{\tilde \xi}=\svec (\widetilde \Qb)$$
we express $\x\T \widetilde \Qb\x = \langle \x\x\T, \widetilde \Qb\rangle_F  = f(\x,\widetilde \Qb)$ in the form discussed above. 

\subsection{Deterministic reformulations of uncertain StQPs; \texorpdfstring{\\}{ }
positive minimax gap}\label{subsec:minimax gap}

As a straightforward consequence of our general considerations, we obtain the following deterministic reformulation of~\eqref{DRStQP}:

\begin{theorem}\label{thm:DRStQP} Let $p\geq 1$, $\theta >0$, $\norm{\cdot}= \norm{\cdot}_F$ be the Frobenius norm and $\mathbb P^{\prime} = \widehat{\mathbb P}_N$ be the empirical measure, then the \eqref{DRStQP} is equivalent to a deterministic StQP, independent of the choice of $p$. In particular:
\begin{equation}\label{equivalent formulation for decision independent DRO}
  \underset{\x \in \Delta}{\vphantom{\sup}\inf} \, \underset{\PP \in \mathbb B_{\theta,p} (\widehat\PP_N)}{\sup} \,\mathbb E_{\PP}[\x^{\top}\widetilde\Qb\x] = \min_{\x \in \Delta} \, \x^{\top}(\overline \Qb + \theta \, \Ib)\x \,,
\end{equation}
where $\overline{\Qb}:=\frac{1}{N}\sum_{i=1}^N\widehat{\Qb}_i$ denotes the sample mean, and the worst-case distribution is given by $\mathbb P^*_\x := (T^\x)_{\#}\widehat{\mathbb P}_N$, with $T^\x(\Zb) = \Zb + \tfrac{\theta}{\x^{\top}\x}\x\x^{\top}$.
\end{theorem}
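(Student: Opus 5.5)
We reduce \eqref{DRStQP} to Theorem~\ref{thm:linear-loss} through the isometry $\svec$. Fix $\x\in\Delta$ and write $\x^{\top}\widetilde\Qb\x=\h(\x)^{\top}\boldsymbol{\tilde\xi}$ with $\h(\x)=\svec(\x\x^{\top})$ and $\boldsymbol{\tilde\xi}=\svec(\widetilde\Qb)$. Because $\langle\Ab,\Bb\rangle_F=\svec(\Ab)^{\top}\svec(\Bb)$, we have $\|\Ab\|_F=\|\svec(\Ab)\|_2$. Hence the $p$-Wasserstein distance on $\mathcal S^n$ with respect to the Frobenius norm coincides with the $p$-Wasserstein distance on $\R^m$ with respect to the Euclidean norm, after pushing all measures forward by $\svec$. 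The ball $\mathbb B_{\theta,p}(\widehat\PP_N)$ is therefore mapped bijectively onto the corresponding Euclidean ball around $\svec_{\#}\widehat\PP_N$, which has finite moments of every order. Finally, $\cc:=\h(\x)\neq\boldsymbol 0$, since $\x\neq\boldsymbol 0$ on $\Delta$.

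Theorem~\ref{thm:linear-loss} now applies and gives
\[
\sup_{\PP\in\mathbb B_{\theta,p}(\widehat\PP_N)}\EE_\PP[\x^{\top}\widetilde\Qb\x]=\langle \x\x^{\top},\overline\Qb\rangle_F+\theta\|\x\x^{\top}\|_F ,
\]
because the mean of the empirical measure is $\svec(\overline\Qb)$. The first term equals $\x^{\top}\overline\Qb\x$. For the second term we use the key computation $\|\x\x^{\top}\|_F^2=\operatorname{tr}(\x\x^{\top}\x\x^{\top})=(\x^{\top}\x)^2$, so $\|\x\x^{\top}\|_F=\x^{\top}\x=\x^{\top}\Ib\x$. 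The inner supremum is thus the quadratic form $\x^{\top}(\overline\Qb+\theta\Ib)\x$.

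The worst-case shift from Theorem~\ref{thm:linear-loss} is $\frac{\theta}{\|\cc\|_2}\cc$. Translated back to matrices, this is $\frac{\theta}{\x^{\top}\x}\x\x^{\top}$, which gives exactly the stated $T^\x$. The resulting $\mathbb P^*_\x$ lies in the ball because the transport cost of this shift is exactly $\theta^p$.

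Taking the infimum over $\x\in\Delta$ yields the right-hand side of \eqref{equivalent formulation for decision independent DRO}. The infimum is attained, and may be written as a minimum, because the objective is continuous and $\Delta$ is compact. None of these quantities depends on $p$, which gives the independence claim.

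The only step needing care is checking that the push-forward under $\svec$ preserves Wasserstein balls. Since $\svec$ is a linear isometry between $(\mathcal S^n,\|\cdot\|_F)$ and $(\R^m,\|\cdot\|_2)$, couplings correspond one-to-one under $\svec\times\svec$ and transport costs are unchanged. So this step is essentially immediate; the rest is bookkeeping.
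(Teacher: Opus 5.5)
Your proposal is correct and follows the same route as the paper, which simply invokes Theorem~\ref{thm:linear-loss} with $\cc=\operatorname{svec}(\x\x^{\top})$, $\boldsymbol{\tilde\xi}=\operatorname{svec}(\widetilde\Qb)$ and the identity $\|\operatorname{svec}(\x\x^{\top})\|_2=\x^{\top}\x$. Your additional remarks make explicit what the paper leaves implicit: that $\operatorname{svec}$ transfers Frobenius Wasserstein balls isometrically to Euclidean ones, that $\cc\neq\boldsymbol 0$ on $\Delta$, and that the minimum is attained by compactness.
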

\begin{proof} The claim follows from Theorem \ref{thm:linear-loss} by taking $\cc = \operatorname{svec}(\x\x^{\top})$, $\boldsymbol{\tilde \xi} = \operatorname{svec}(\widetilde\Qb)$ and observing that $\|\operatorname{svec}(\x\x^{\top})\|_2 = \x^{\top}\x$.
\end{proof}

\begin{remark} So for the Euclidean norm, the worst-case distribution does depend on $\x$. This is not the case for the maximum norm, where a minimax theorem holds and the worst-case distribution does not depend on $\x$; see Theorem~\ref{thm:DRStQP-maximum-norm} below for the resulting deterministic reformulation. In contrast, in context of Theorem~\ref{thm:DRStQP} the minimax theorem does not hold. Indeed, it is violated already for the robust StQP. As an example, take $\Qb_{\rm nom} = - \Ib$ and $0< \theta < 1$. Then the Frobenius ball
$\mathcal Q:=B_\theta(-\Ib)$ is consisting entirely of negative-definite matrices, so that
$$\min_{\x\in \Delta} \x\T\Qb\x = \min_{i \in [n]} q_{ii}
\quad \mbox{for all }\Qb = (q_{ij})\in \mathcal Q\, .$$
Since $\min_{i \in [n]} v_{ii} \le \frac\theta{\sqrt n}$ for all $\Vb = (v_{ij})$ with ${\norm \Vb}_F\le \theta$, it follows for the maximin value
$$\max_{\Qb\in\mathcal Q} \min_{\x\in \Delta} \x\T\Qb\x =
\max_{\Qb\in \mathcal Q} \min_{i \in [n]} q_{ii}   = \frac \theta{\sqrt n}-1\, ,$$
but the minimax value equals, as discussed above,
$$\min_{\x\in \Delta}\max_{\Qb\in \mathcal Q} \x\T\Qb\x= \min_{\x\in \Delta}
\x\T(\theta-1)\Ib \x = (\theta-1)\max_{\x\in \Delta}\x\T\x = \theta-1\, ,$$
a strictly larger value.
\end{remark}

\noindent The following corollary shows that several uncertain StQP models have the same equivalent deterministic StQP counterpart. Recall a popular way to model random symmetric matrices:

\begin{definition}\label{def:GOE}
    A random symmetric matrix $\widetilde\Gb = (\widetilde g_{ij}) \in \mathcal S^n$ is said to be distributed according to the Gaussian Orthogonal Ensemble (in symbols: $\widetilde\Gb \sim \operatorname{GOE}(n))$, if the diagonal entries $\widetilde g_{ii}$ are i.i.d. with $\widetilde g_{ii} \sim \mathcal N(0,2)$ for $1\leq i \leq n$ and the off-diagonal entries $\widetilde g_{ij}$ are i.i.d. with $\widetilde g_{ij} \sim \mathcal N(0,1)$ for $1\leq i < j\leq n$ and independent of the $\widetilde g_{ii}$.
\end{definition}

\noindent Another popular way of modeling random symmetric matrices is: 

\begin{definition}\label{def:Wishart}
    A random symmetric matrix $\widetilde\Wb \in \mathcal S^n$ is said to be distributed according to the Wishart Ensemble with covariance matrix $\boldsymbol{\Sigma}$ and $k$ degrees of freedom (in symbols: $\widetilde \Wb \sim \mathcal W_n(\boldsymbol{\Sigma},k))$, if 
    $\widetilde\Wb  = \widetilde \Yb \widetilde \Yb^{\top}$ where $\widetilde \Yb = (\widetilde \y_1, \dots, \widetilde \y_k)$ and $\widetilde \y_1, \dots, \widetilde \y_k \sim \mathcal N_n(\boldsymbol{0},\boldsymbol{\Sigma})$ are i.i.d. random vectors from the $n$-dimensional normal distribution with mean $\boldsymbol{0}$ and positive definite covariance matrix $\boldsymbol{\Sigma}$.
\end{definition}

\begin{corollary}\label{cor:unifying RStQP, CCEStQP and DRStQP}
Consider the following uncertain StQP problems:
\begin{enumerate}[label=(\roman*)]
\item Robust StQP with ellipsoidal uncertainty set from \cite{bomze2021trust}: The distribution is unknown, it is only known that $\Qb$ is an element of an ellipsoidal uncertainty set around a nominal matrix $\Qb_{\rm nom}$ with radius $\theta>0$
\begin{equation*}
\min_{\x \in \Delta} \max_{\Qb \in \mathcal Q} \,\x^{\top}\Qb\x\, ,
\end{equation*}
with Frobenius ball
$$
\mathcal Q= B_{\theta}(\Qb_{\rm nom}) = \{\Qb \in \mathcal S^n: \|\Qb - \Qb_{\rm nom}\|_F \leq \theta\}\,.
$$
\item Chance-constrained StQP from~\cite{bomze2025uncertain}: The distribution $\mathbb P_{\rm true}$ is fully known, the chance-constrained StQP is defined as
\begin{equation*}
\min_{\x \in \Delta} \operatorname{VaR}_{\alpha}[\x^{\top}\widetilde \Qb\x]
\end{equation*}
where the Value-at-Risk of a random variable $\tilde \zeta$ with distribution $\mathbb P_{\rm true}$ at level $\alpha \in (0.5,1)$ is given by
$$
\operatorname{VaR}_{\alpha}[\tilde \zeta] = \inf\{t \in \mathbb R: \mathbb P_{\rm true}[\tilde \zeta \leq t]\geq \alpha\}\,.
$$
\begin{itemize}
    \item For the  Gaussian Orthogonal Ensemble perturbation $\widetilde \Gb \sim \operatorname{GOE}(n)$ with 
    $$
\widetilde \Qb = \Qb_{\rm nom} + \beta \widetilde \Gb\, , \quad \beta > 0\, ,
$$
put $\theta := \sqrt{2}\beta \Phi^{-1}(\alpha)>0$, where $ \Phi^{-1}$ is the inverse cumulative distribution function of the standard normal distribution.
\item For the  Wishart Ensemble perturbation $\widetilde \Wb \sim \mathcal W_n(\Ib,k)$ with 
$$
\widetilde \Qb = \Qb_{\rm nom} + \beta \widetilde \Wb\, , \quad \beta > 0\, ,
$$
put $\theta := 2\beta P^{-1}(\tfrac{k}{2},\alpha)$, where $ P^{-1}$ is the inverse of the regularized lower incomplete gamma function $P$ with respect to its second argument.
\end{itemize}
\item Distributionally robust StQP \eqref{DRStQP} under Euclidean norm and Wasserstein ambiguity with any $p \geq 1$, $\theta > 0$.
\end{enumerate}
Then for $\Qb_{\rm nom} = \overline\Qb$, all three uncertain StQP models can be equivalently rewritten as the deterministic StQP
$$
\min_{\x \in \Delta} \x^{\top}(\overline{\Qb} + \theta \, \Ib)\x\,.
$$
\end{corollary}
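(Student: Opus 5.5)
The plan is to treat the three models separately and reduce each one to the same deterministic objective $\x^{\top}(\overline\Qb+\theta\Ib)\x$. Since all three are then minimized over $\Delta$, matching the objectives for every fixed $\x\in\Delta$ is enough.

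Case (iii) is already settled. Theorem~\ref{thm:DRStQP} states exactly this reformulation for every $p\ge1$ and $\theta>0$.

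For case (i), fix $\x\in\Delta$ and pass to $\svec$ coordinates. The inner maximum becomes
$$\max_{\|\svec(\Qb)-\svec(\overline\Qb)\|_2\le\theta}\svec(\x\x^{\top})^{\top}\svec(\Qb).$$
By Cauchy--Schwarz, in the same way as in the proof of Theorem~\ref{dual norm regularization} (the dual of the Euclidean norm is itself), this equals $\x^{\top}\overline\Qb\x+\theta\|\x\x^{\top}\|_F$. The maximizer is $\Qb=\overline\Qb+\theta\,\x\x^{\top}/\|\x\x^{\top}\|_F$. Finally $\|\x\x^{\top}\|_F=\x^{\top}\x=\x^{\top}\Ib\x$, which gives the claimed objective.

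For case (ii), fix $\x\in\Delta$ and compute the law of $\x^{\top}\widetilde\Qb\x=\x^{\top}\overline\Qb\x+\beta\,\x^{\top}\widetilde\Zb\x$, with $\widetilde\Zb$ the GOE or Wishart perturbation. VaR is translation equivariant and positively homogeneous, so $\operatorname{VaR}_\alpha$ of the whole expression is $\x^{\top}\overline\Qb\x+\beta\operatorname{VaR}_\alpha[\x^{\top}\widetilde\Zb\x]$.

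In the GOE case, $\x^{\top}\widetilde\Gb\x=\sum_i x_i^2\widetilde g_{ii}+2\sum_{i<j}x_ix_j\widetilde g_{ij}$ is centered Gaussian. Its variance is $2\sum_i x_i^4+4\sum_{i<j}x_i^2x_j^2=2(\x^{\top}\x)^2$. Hence $\operatorname{VaR}_\alpha=\sqrt2\,\x^{\top}\x\,\Phi^{-1}(\alpha)$, and $\beta$ times this is $\theta\,\x^{\top}\x$ with $\theta=\sqrt2\beta\Phi^{-1}(\alpha)$.

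In the Wishart case with $\boldsymbol\Sigma=\Ib$, write $\x^{\top}\widetilde\Wb\x=\sum_{l=1}^k(\x^{\top}\widetilde\y_l)^2$. Each $\x^{\top}\widetilde\y_l\sim\mathcal N(0,\x^{\top}\x)$, independently across $l$, so $\x^{\top}\widetilde\Wb\x\sim\x^{\top}\x\,\chi^2(k)$. The $\chi^2(k)$ cdf is $P(k/2,t/2)$, so its $\alpha$-quantile is $2P^{-1}(k/2,\alpha)$. Therefore $\beta\operatorname{VaR}_\alpha=\theta\,\x^{\top}\x$ with $\theta=2\beta P^{-1}(k/2,\alpha)$.

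The only steps needing care are the distributional ones in (ii): the GOE variance computation, where the $\sqrt2$ scaling has to come out right, and the chi-squared quantile conversion. The condition $\alpha>0.5$ is what makes $\theta>0$ in the GOE case. These results also appear in \cite{bomze2025uncertain}, so I would cite them there and only sketch the computations. Putting the three cases together, each model yields the objective $\x^{\top}(\overline\Qb+\theta\Ib)\x$ on $\Delta$, which proves the corollary.
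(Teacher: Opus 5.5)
Your proposal is correct and uses the same case split as the paper. Part (iii) is Theorem~\ref{thm:DRStQP}. Your Wishart computation is also the paper's: $\x^{\top}\widetilde\Wb\x\sim\x^{\top}\x\,\chi^2(k)$, i.e.\ a Gamma law with shape $k/2$ and scale $2\x^{\top}\x$, whose $\alpha$-quantile is $2\x^{\top}\x\,P^{-1}(\tfrac k2,\alpha)$.

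The difference is in how the remaining steps are justified. The paper settles (i) and the GOE case by citing \cite{bomze2021trust} and \cite{bomze2025uncertain}. It closes the Wishart case by invoking \cite[Theorem~7]{bomze2025uncertain} with $\Mb:=\Qb_{\rm nom}$ and $\Sb:=\Eb$. You derive these directly:
\begin{itemize}
\item For (i), you use Cauchy--Schwarz in $\operatorname{svec}$-coordinates, which is the Euclidean case of Theorem~\ref{dual norm regularization}, together with $\|\x\x^{\top}\|_F=\x^{\top}\x$.
\item For both perturbation models in (ii), you use translation equivariance and positive homogeneity of $\operatorname{VaR}_\alpha$ for $\beta>0$. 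In the GOE case you add the explicit law of $\x^{\top}\widetilde\Gb\x$; your variance $2\sum_i x_i^4+4\sum_{i<j}x_i^2x_j^2=2(\x^{\top}\x)^2$ is correct.
\end{itemize}

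The paper's proof is shorter and relies on more general results in the cited works. Yours is self-contained and shows where the constants $\sqrt2\,\Phi^{-1}(\alpha)$ and $2P^{-1}(\tfrac k2,\alpha)$ come from. It also proves the identity of the objectives for every fixed $\x\in\Delta$, which is exactly what ``equivalently rewritten'' requires.
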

\begin{proof} The equivalent reformulations of $(i)$ can be found in~\cite{bomze2021trust}, and the reformulation of $(iii)$ is Theorem~\ref{thm:DRStQP} above. The equivalent reformulation of $(ii)$ for the case of GOE perturbation can be found in~\cite{bomze2025uncertain}. It remains to address the Wishart perturbation. For any general $\widetilde \Wb \sim \mathcal W_n(\boldsymbol{\Sigma},k)$, we have that $\x^{\top}\widetilde \Wb\x$ is a gamma distributed random variable with shape $a = \tfrac{k}{2}$ and scale $b = 2\x^{\top}\boldsymbol{\Sigma}\x$. In the special case $\boldsymbol{\Sigma} = \Ib$, this reduces to $b = 2\x^{\top}\x$. The inverse cumulative distribution function $F^{-1}$ of a gamma distributed random variable is given by
\begin{equation*}
  F^{-1}(\alpha) =
    \begin{cases}
       P^{-1}(a,\alpha)\,b& \text{ for } \alpha \geq 0\\
      -\infty &  \text{ for } \alpha < 0\,.
    \end{cases}       
 \end{equation*}
Then, the claim follows from \cite[Theorem~7]{bomze2025uncertain} by setting $\Mb := \Qb_{\rm nom}$ and $\Sb:=\Eb = \e\e^{\top}$.  \end{proof}


\begin{remark}
    Consider a general distributionally robust quadratic problem of the following form
\begin{equation}\label{general DRQP}
\inf_{\x \in \mathcal X} \sup_{\mathbb P \in \mathbb B_{\theta, p}(\widehat{\mathbb P}_N)} \mathbb E_{\mathbb P}[\x^{\top}\widetilde \Qb \x + \widetilde \cc^{\top}\x + \widetilde \omega]\,,
\end{equation}
where $\mathcal X \subseteq \mathbb R^n$ is an arbitrary deterministic subset of $\mathbb R^n$, all data is are random and we only have access to the sample $(\widehat \Qb_i, \widehat \cc_i, \widehat \omega_i)_{i=1}^N$. Observe that now the number of uncertain parameters is $m = \frac{n(n+1)}{2} + n + 1 = \frac{(n+1)(n+2)}{2}$. Denote the sample means by $\overline{\Qb}$, $\overline{\cc}$ and $\overline{\omega}$, respectively and set
$$
\aa=  \left(
        \begin{array}{c}
        \operatorname{svec}(\x\x^{\top})\\
        \x\\
        1
        \end{array}
\right)\,, \quad \boldsymbol{\tilde \xi} =  \left(
        \begin{array}{c}
        \operatorname{svec}(\widetilde \Qb)\\
        \widetilde \cc\\
        \widetilde \omega
        \end{array}
\right)\,.
$$
By Theorem \ref{thm:DRStQP}, for any $p\geq 1$ and Euclidean norm \eqref{general DRQP} is equivalent to the (nonlinear) deterministic problem
$$
\begin{aligned}
\inf_{\x \in \mathcal X}&\biggl\{ \x^{\top}\overline{\Qb}\x + \overline{\cc}^{\top}\x + \overline{\omega} + \theta \sqrt{\norm{\operatorname{svec}(\x\x^{\top})}_2^2 + \norm{\x}_2^2 + \norm{1}_2^2} \biggr\}\\
&= \overline{\omega}  + \inf_{\x \in \mathcal X}\biggl\{ \x^{\top}\overline{\Qb}\x + \overline{\cc}^{\top}\x + \theta \sqrt{(\x^{\top}\x)^2 + \x^{\top}\x + 1} \biggr\}\,.
\end{aligned}
$$
\end{remark}

\subsection{DRStQP with decision-dependent radius}\label{d3}

In this subsection we will consider decision-dependent ambiguity sets. Distributionally robust optimization with decision-dependent ambiguity set is a new framework where the radius of the ambiguity set is a function of the decision variable $\x$, see 
\cite{luo2020distributionally} or 
\cite{noyan2022distributionally}. Suppose that we are interested in Wasserstein ambiguity sets centered at the empirical distribution where the radius is a function $\theta=\theta(\x)$
is a function the decision variable $\x$, i.e., $\theta:\mathcal X\to\mathbb R_+$. This motivates the definition of the decision-dependent Wasserstein ball
\begin{equation*}
 \mathbb B_{\theta(\x),p}(\widehat\PP_N):= \{\PP\in \mathcal P(\mathbb R^m) : W_p(\PP, \widehat\PP_N)\leq \theta(\x) \} \,.
\end{equation*}
The decision-dependent distributionally robust optimization problem ($\operatorname{D^3RO}$) with Wasserstein ambiguity set is then defined as
\begin{equation}\label{D^3RO}\tag{$\operatorname{D^3RO}$}
   \underset{\x \in \mathcal X}{\vphantom{\sup}\inf} \, \underset{\PP \in \mathbb B_{\theta(\x),p}(\widehat\PP_N)}{\sup} \,\mathbb E_{\PP}[f(\x,\boldsymbol{\tilde\xi})]\, .
\end{equation}

\noindent Analogously, the decision-dependent distributionally robust standard quadratic optimization problem ($\operatorname{D^3RStQP}$) is defined as
\begin{equation}\label{D^3RStQP}\tag{$\operatorname{D^3RStQP}$}
   \underset{\x \in \Delta}{\vphantom{\sup}\inf} \, \underset{\PP \in \mathbb B_{\theta(\x),p}(\widehat\PP_N)}{\sup} \,\mathbb E_{\PP}[\x^{\top}\widetilde \Qb\x]\, .
\end{equation}

\begin{corollary}\label{Cor rewrite of D3RO under r(x)}
    Consider the decision-dependent Wasserstein ambiguity set with $p\geq 1$, Euclidean distance (generated by the Frobenius inner product) and any radius function $\theta:\Delta \to (0,\infty)$. Then we have the following deterministic reformulation for the \eqref{D^3RStQP}:
\begin{equation}\label{deterministic form for D^3RStQP}
  \underset{\x \in \Delta}{\vphantom{\sup}\inf} \, \underset{\PP \in \mathbb B_{\theta(\x),p} (\widehat\PP_N)}{\sup} \,\mathbb E_{\PP}[\x^{\top}\widetilde\Qb\x] = \inf_{\x \in \Delta} \, \x^{\top}[\overline \Qb + \theta(\x) \, \Ib]\x  =  \inf_{\x \in \Delta} \left [\x^\top \overline\Qb \x + \theta(\x) \x^\top \x\right ] \,.
\end{equation}
\end{corollary}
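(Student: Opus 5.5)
The plan is to work pointwise in $\x$: for each fixed $\x\in\Delta$ the radius $\theta(\x)$ is just a positive number, so the inner supremum is the decision-independent inner problem already handled, with that number as radius. The outer infimum then acts on a deterministic function of $\x$.

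First, fix $\x\in\Delta$ and put $r:=\theta(\x)>0$. As in Theorem~\ref{thm:DRStQP}, identify $\mathcal S^n$ with $\R^m$, $m=\tfrac{n(n+1)}{2}$, via $\svec$. Write $\x\T\widetilde\Qb\x=\cc\T\boldsymbol{\tilde\xi}$ with $\cc=\svec(\x\x\T)$ and $\boldsymbol{\tilde\xi}=\svec(\widetilde\Qb)$. Because $\svec$ is an isometry between the Frobenius and Euclidean norms, the ball $\mathbb B_{r,p}(\widehat\PP_N)$ in $\mathcal S^n$ corresponds exactly to the Euclidean $p$-Wasserstein ball of radius $r$ around the push-forward of $\widehat\PP_N$. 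That push-forward lies in $\mathcal P_p(\R^m)$, since it is finitely supported.

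Next, apply Theorem~\ref{thm:linear-loss}, or equivalently Theorem~\ref{dual norm regularization} using $\norm{\cdot}_2^*=\norm{\cdot}_2$. This requires $\cc\neq\boldsymbol 0$, which holds because $\x\in\Delta$ forces $\x\neq\boldsymbol 0$. The result is
\[
\sup_{\PP\in\mathbb B_{r,p}(\widehat\PP_N)}\EE_\PP[\x\T\widetilde\Qb\x]=\x\T\overline\Qb\x+r\,\norm{\svec(\x\x\T)}_2 .
\]
Here $\EE_{\widehat\PP_N}[\svec(\widetilde\Qb)]=\svec(\overline\Qb)$. Moreover $\norm{\svec(\x\x\T)}_2=\norm{\x\x\T}_F=\sqrt{\operatorname{tr}(\x\x\T\x\x\T)}=\x\T\x$. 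This identity holds for every $\x\in\Delta$ and every value $r$, with no uniformity needed, and it does not depend on $p$.

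Finally, substitute $r=\theta(\x)$ back in. The objective of \eqref{D^3RStQP} then equals $\x\T\overline\Qb\x+\theta(\x)\,\x\T\x=\x\T[\overline\Qb+\theta(\x)\Ib]\x$ pointwise on $\Delta$, so taking $\inf_{\x\in\Delta}$ of both sides gives \eqref{deterministic form for D^3RStQP}.

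The only point requiring care is the first step, and it is mild. The earlier theorems are stated for a fixed radius, so I must check they can be applied separately for each $\x$. They can: the ambiguity set $\mathbb B_{\theta(\x),p}(\widehat\PP_N)$ depends on $\x$ only through the scalar $\theta(\x)$. No minimax interchange or attainment argument is needed, which is why the right-hand side is written as an $\inf$ rather than a $\min$. No continuity of $\theta$ is assumed, so attainment may fail.
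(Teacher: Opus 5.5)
Your proposal is correct and is essentially the paper's proof. You fix $\x\in\Delta$, treat $\theta(\x)$ as a constant radius, and rerun the argument of Theorem~\ref{thm:DRStQP}, i.e.\ Theorem~\ref{thm:linear-loss} with $\cc=\svec(\x\x\T)$ and $\norm{\svec(\x\x\T)}_2=\x\T\x$, before taking the infimum; the paper does the same, and you simply spell out the steps it leaves implicit.
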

\begin{proof}
    For any fixed $\x \in \Delta$, we have that $\theta(\x)>0$ is a constant. Therefore, we can repeat the same steps as in the proof of Theorem~\ref{thm:DRStQP} with $\theta(\x)$ instead of $\theta$ to yield an equivalent reformulation of the inner maximization problem.
\end{proof}

\noindent There are at least three possible choices of $\theta(\x)$ rendering~\eqref{deterministic form for D^3RStQP} tractable:
\begin{enumerate}[label=(\roman*)]
    \item $\theta(\x) = \theta$ yields the \eqref{DRStQP}.
    \item $\theta(\x) = \frac{\gamma}{\x^{\top}\x}$ for some $\gamma \in \mathbb R$ makes the following interesting reduction:
$$
\inf_{\x \in \Delta} \, \x^{\top}[\overline \Qb + \theta(\x) \, \Ib]\x = \gamma + \min_{\x \in \Delta} \, \x^{\top}\overline \Qb\x \,. $$
Therefore we are back to a nominal StQP with data $\overline \Qb$ plus a constant shift.
%
%
\item 
For a given strictly copositive\footnote{i.e., if $\min_{\x\in \Delta}\x\T\Rb\x >0$} matrix $\Rb$, the choice $\theta(\x) = \frac{1}{\x^{\top}{\Rb}\x}$ yields
\begin{equation*}
\inf_{\x \in \Delta} \, \x^{\top}\left [\overline \Qb + \theta(\x) \, \Ib\right]\x = \min_{\x \in \Delta} \left [ \x^{\top}\overline \Qb\x  + \frac{\x^{\top}\x}{\x^{\top} \Rb\x}\right]\, .
\end{equation*}
Observe that we can have indefinite instances of $\Rb$ which are strictly copositive; even for the choice $\Rb=\tfrac{1}{\gamma}\overline \Qb$ which can be motivated as follows: if the decision $\x$ is ``too good''  for the average case ``to be true'', in the sense that $\x\T\overline\Qb\x$ is small, we may put less trust in the result and therefore choose a larger radius $\theta(\x)$ for the ambiguity set; while for moderate quality of $\x\T\overline \Qb\x$ (larger value), we may be satisfied with smaller ambiguity sets. For further discussion and experiments, see Section~\ref{exper} below.
\end{enumerate}

\begin{remark}
Corollary~\ref{Cor rewrite of D3RO under r(x)} can be extended in a straightforward way to more general cost functions than the Euclidean norm and as well to general reference distributions replacing $\widehat{\mathbb P}_N$.
\end{remark}

\section{Out-of-sample performance guarantees for DRStQP}\label{sec:4 out-of-sample performance guarantees}

\noindent In Section \ref{sec:3 DRStQP}, we derived tractable deterministic reformulations of \eqref{DRStQP} under the assumption that the ambiguity radius —either a fixed scalar $\theta$ or a decision-dependent function ${\theta}(\x)$— is specified a priori as part of the model. In this section, we provide the out-of-sample performance guarantees for~\eqref{DRStQP}; for detailed motivation of the following approach see, e.g. 
\cite{mohajerin2018data}. It consists of adapting the radius of the Wasserstein ambiguity set, namely the ball around the empirical distribution
$\widehat{\mathbb P}_N=\sum_{i=1}^N\delta_{\widehat{\boldsymbol{\xi}}_i}$, now seen as a randomly varying distribution under the law $\mathbb P_{\rm true}^N$ governing the values of $(\widehat{\boldsymbol{\xi}}_1,\dots,\widehat{\boldsymbol{\xi}}_N$). 

\noindent The following results are, as in~\cite{mohajerin2018data}, based on a finite sample result on the high-probability (some $1-\beta$ with $\beta>0$ small) coverage of the true (unknown to us) distribution $\mathbb P_{\rm true}$
by balls centered around $\widehat{\mathbb P}_N$ with appropriate radius {\footnote{{note that in contrast to the previous Section~\ref{d3}, the radius does not depend on the decision $\x$, but rather on the confidence level $\beta$; we stick with a similar notation to keep it simple.}}}  $\theta_N(\beta)>0$, which now is no longer a user-defined constant reflecting their confidence in the estimate $\widehat{\mathbb P}_N$, but rather determined by this coverage condition, i.e., by the property
$$
\mathbb P_{\rm true}\in {\mathbb B}_{\theta_N(\beta), p}(\widehat{\mathbb P}_N)\, .
$$

\subsection{Guarantees under exponential decay assumption}

First, we consider the distributions with tails that decay at an exponential rate.
\begin{assumption}\label{assumption of exponential decay tails}
    There exists an exponent $a > 1$ such that the true distribution $\mathbb{P}_{\rm true}$ satisfies
    \begin{equation*}
\mathbb{E}_{\mathbb{P}_{\rm true}}[\exp(\| \boldsymbol{\tilde \xi} \|_2^{a})] = \int_{\mathbb R^m} \exp(\| \bm\xi \|_2^{a}) \, \mathrm{d}\mathbb{P}_{\rm true}(\bm\xi) < \infty.
    \end{equation*}
\end{assumption}


\noindent The GOE model of random symmetric matrices satisfies Assumption~\ref{assumption of exponential decay tails}:

\begin{proposition}\label{lem:GOE-satisfies-exponential-decay} Let $\mathbb P_{\rm true}$ denote the true distribution of $\boldsymbol{\tilde \xi}$, where $\boldsymbol{\tilde \xi}:=\operatorname{svec}(\widetilde \Gb)$, and $\widetilde \Gb \sim \operatorname{GOE}(n)$. Then $\boldsymbol{\tilde \xi} \sim \mathcal N_m(\boldsymbol{0},2\Ib)$ and $\mathbb P_{\rm true}$ satisfies Assumption \ref{assumption of exponential decay tails} for any $a \in (1,2)$.
\end{proposition}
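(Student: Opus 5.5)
The proof has two parts: first identify the law of $\boldsymbol{\tilde \xi}=\operatorname{svec}(\widetilde \Gb)$, then check the exponential moment condition by a Gaussian integral comparison.

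\emph{Step 1: the law of $\boldsymbol{\tilde\xi}$.} By the ordering convention for $\operatorname{svec}$, the first $n$ coordinates of $\boldsymbol{\tilde \xi}$ are the diagonal entries $\widetilde g_{ii}\sim\mathcal N(0,2)$. The remaining $\tfrac{n(n-1)}2$ coordinates are $\sqrt2\,\widetilde g_{ij}$ with $\widetilde g_{ij}\sim\mathcal N(0,1)$, so each is $\mathcal N(0,2)$.

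By Definition~\ref{def:GOE}, all these entries are mutually independent. Hence $\boldsymbol{\tilde \xi}$ is a vector of $m$ independent centered Gaussians with common variance $2$, that is, $\boldsymbol{\tilde \xi}\sim\mathcal N_m(\boldsymbol 0,2\Ib)$.

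\emph{Step 2: the exponential moment.} Now fix $a\in(1,2)$. We need
\[
\mathbb E\bigl[\exp(\|\boldsymbol{\tilde\xi}\|_2^{a})\bigr]=(4\pi)^{-m/2}\int_{\mathbb R^m}\exp\Bigl(\|\bm\xi\|_2^a-\tfrac{\|\bm\xi\|_2^2}{4}\Bigr)\,\mathrm d\bm\xi<\infty .
\]
The key observation is that $a<2$ lets the Gaussian term dominate. By Young's inequality with exponents $2/a$ and $2/(2-a)$, for every $t\ge0$,
\[
t^a\le \tfrac18 t^2+C_a
\]
for some constant $C_a$ depending only on $a$. Alternatively, one can argue that $t^a-t^2/8\to-\infty$ as $t\to\infty$ and is continuous, hence bounded above.

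Substituting $t=\|\bm\xi\|_2$, the integrand is bounded by $e^{C_a}\exp(-\|\bm\xi\|_2^2/8)$. This dominating function is integrable, since it is a multiple of a Gaussian density with variance $4$. Therefore the expectation is at most $e^{C_a}2^{m/2}<\infty$, which verifies Assumption~\ref{assumption of exponential decay tails}.

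\emph{Main obstacle.} Neither step is deep. The only care needed is in Step 1, to check that the $\sqrt2$ scaling in $\operatorname{svec}$ exactly equalizes the variances of the diagonal and off-diagonal coordinates. I would also remark that the argument fails at $a=2$, since $\mathbb E[\exp(\|\boldsymbol{\tilde\xi}\|_2^2)]=\infty$ for variance $2$. This is why the statement restricts to the open interval $(1,2)$.
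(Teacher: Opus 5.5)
Your proof is correct and follows the same route as the paper. You read off $\boldsymbol{\tilde\xi}\sim\mathcal N_m(\boldsymbol 0,2\Ib)$ from the $\sqrt2$ scaling in $\operatorname{svec}$, then write the exponential moment as the Gaussian integral $(4\pi)^{-m/2}\int_{\mathbb R^m}\exp\bigl(\|\bm\xi\|_2^a-\tfrac14\|\bm\xi\|_2^2\bigr)\,\mathrm d\bm\xi$, which is the paper's integral since $(4\pi)^{-m/2}=2^{-m}\pi^{-m/2}$. The paper only asserts joint Gaussianity and finiteness, whereas you justify the first via the independence clause of Definition~\ref{def:GOE} and the second via the domination $t^a\le\tfrac18t^2+C_a$, with the explicit bound $e^{C_a}2^{m/2}$.
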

\begin{proof}By construction of the symmetric vectorization operator, since off-diagonal entries are multiplied by a factor of $\sqrt{2}$, we have $\boldsymbol{\tilde \xi} \sim \mathcal N_m(\boldsymbol{0},2\Ib)$. For any nondegenerate $m$-dimensional Gaussian random vector with mean $\boldsymbol{\mu}$ and covariance matrix $\boldsymbol{\Sigma}$ its probability density function is given by
$$
\rho(\boldsymbol{\xi}) = \frac{1}{\sqrt{(2\pi)^m\operatorname{det}\boldsymbol{\Sigma}}}\exp\left(-\tfrac{1}{2}(\boldsymbol{\xi} - \boldsymbol{\mu})^{\top}\boldsymbol{\Sigma}^{-1}(\boldsymbol{\xi} - \boldsymbol{\mu})\right)\, \quad \boldsymbol{\xi} \in \mathbb R^m\,.
$$
In our case for $\boldsymbol{\tilde \xi} \sim \mathcal N_m(\boldsymbol{0},2\Ib)$ this means that
$$
\mathbb E_{\mathbb P_{\rm true}}[\exp(\|\boldsymbol{\tilde \xi}\|_2^a)] = \int_{\mathbb R^m}\exp(\|\boldsymbol{\xi}\|_2^a) \, \mathrm{d}\mathbb P_{\rm true}(\boldsymbol{\xi}) = \frac{1}{2^m\pi^{m/2}}\int_{\mathbb R^m}\exp(\|\boldsymbol{\xi}\|_2^a-\tfrac{1}{4}\|\boldsymbol{\xi}\|_2^2)\,\mathrm{d}\boldsymbol{\xi}\,.
$$
Here we can see that if $a \geq 2$ the integral becomes infinite and for any $a \in (1,2)$ it is finite. 
\end{proof}
\noindent The Wishart model of random symmetric matrices does not satisfy Assumption~\ref{assumption of exponential decay tails}:
\begin{proposition}\label{lem:Wishart-does-not-satisfy-exponential-decay} 
Let $\mathbb P_{\rm true}$ denote the true distribution of $\boldsymbol{\tilde \xi}$, where $\boldsymbol{\tilde \xi}:=\operatorname{svec}(\widetilde \Wb)$, and $\widetilde \Wb \sim \mathcal W_n(\Ib,n)$. Then $\mathbb P_{\rm true}$ does not satisfy Assumption~\ref{assumption of exponential decay tails}.
\end{proposition}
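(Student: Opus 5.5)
We have to show that for every $a>1$ the expectation $\mathbb E_{\mathbb P_{\rm true}}[\exp(\|\boldsymbol{\tilde \xi}\|_2^{a})]$ is infinite. The plan is to bound $\|\boldsymbol{\tilde \xi}\|_2$ from below by a single scalar that has a gamma-type tail. Since $\operatorname{svec}$ is an isometry, $\|\boldsymbol{\tilde \xi}\|_2 = \|\widetilde \Wb\|_F$. Because $\widetilde\Wb=\widetilde\Yb\widetilde\Yb^{\top}$ is positive semidefinite, each diagonal entry satisfies $\|\widetilde \Wb\|_F \ge \widetilde w_{11}$. Moreover, $\widetilde w_{11}=\e_1^{\top}\widetilde\Wb\e_1=\sum_{j=1}^n \widetilde y_{1j}^2$ with i.i.d. $\widetilde y_{1j}\sim\mathcal N(0,1)$, so $\widetilde w_{11}\sim\chi^2(n)$. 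This is also the case $\x=\e_1$, $\boldsymbol\Sigma=\Ib$ of the gamma fact used in the proof of Corollary~\ref{cor:unifying RStQP, CCEStQP and DRStQP}: shape $n/2$, scale $2$.

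By monotonicity of $t\mapsto\exp(t^a)$ on $\mathbb R_+$, it then suffices to show that
$$\mathbb E[\exp(\widetilde w_{11}^{a})]=\frac{1}{2^{n/2}\Gamma(n/2)}\int_0^\infty t^{n/2-1}\exp\bigl(t^a - t/2\bigr)\,\mathrm dt = \infty .$$
For $a>1$ we have $t^a - t/2\to\infty$ as $t\to\infty$. Hence the integrand is eventually at least $t^{n/2-1}\ge \min\{1,t^{-1/2}\}$, and $\int_1^\infty t^{-1/2}\,\mathrm dt=\infty$, so the integral diverges. In fact the integrand itself tends to infinity. Therefore Assumption~\ref{assumption of exponential decay tails} fails for every exponent $a>1$.

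No step is really an obstacle. The one point needing care is the lower bound $\|\widetilde\Wb\|_F\ge \widetilde w_{11}$, which holds simply because the Frobenius norm dominates the absolute value of any entry, and $\widetilde w_{11}\ge 0$. The contrast with Proposition~\ref{lem:GOE-satisfies-exponential-decay} is that Wishart entries have only exponential (linear-in-$t$) tails, whereas Gaussian entries have quadratic ones. Any $a>1$ therefore overpowers the $e^{-t/2}$ decay.
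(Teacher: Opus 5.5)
Your proposal is correct and follows essentially the same route as the paper: bound $\|\boldsymbol{\tilde \xi}\|_2$ from below by the single entry $\widetilde w_{11}\sim\chi^2(n)$, then note that $\int_0^\infty t^{n/2-1}\exp(t^a-t/2)\,\mathrm dt$ diverges for every $a>1$. Your explicit lower bound of the integrand by $t^{-1/2}$ on $[1,\infty)$ simply spells out the divergence step that the paper only asserts.
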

\begin{proof} It is easy to see that for any $\z \in \mathbb R^m$ we have $\|\z\|_2 \geq |z_i|$ for every $i \in [m]$. This implies
$$
\mathbb E_{\mathbb P_{\rm true}}[\exp(\|\boldsymbol{\tilde \xi}\|_2^a)] = \mathbb E_{\mathbb P_{\rm true}}[\exp(\|\operatorname{svec}(\widetilde \Wb)\|_2^a)] \geq \mathbb E_{\mathbb P_{\rm true}}[\exp(|\widetilde w_{11}|^a)]\,.
$$
Now we will use the fact that $\widetilde \Wb = \widetilde \Yb \widetilde \Yb^{\top}$ and by definition, since $\widetilde y_{ij} \sim \mathcal N(0,1)$ are i.i.d. standard normal random variables,
$$
\widetilde w_{11} = \sum_{k=1}^n \widetilde y_{1k}^2 \sim \chi^2(n)\,.
$$
The probability density function of a chi-squared distributed random variable with $n$ degrees of freedom $\tilde \zeta \sim \chi^2(n)$ is given by
$$
\rho(\zeta) =
        \left\{
        \begin{array}{ll}
       \displaystyle\frac{1}{2^{n/2}\Gamma(n/2)}\zeta^{n/2 -1}\exp(-\zeta/2)\,, & \text{ if } \  \zeta \in (0,\infty)\,\\
       0\,, & \text{ otherwise}\,,
        \end{array}
        \right.
$$
where $\Gamma(\cdot)$ denotes the Gamma function. This implies 
$$
 \mathbb E_{\mathbb P_{\rm true}}[\exp(|\widetilde w_{11}|^a)] = \int_0^{\infty}\exp(|\zeta|^a) \displaystyle\frac{1}{2^{n/2}\Gamma(n/2)}\zeta^{n/2 -1}\exp(-\zeta/2) \, \mathrm{d}\zeta
$$
Here we can see that if $a >1$ the integral becomes infinite. 
\end{proof}

\noindent We apply \cite[Theorem~3.5]{mohajerin2018data}, to provide the following finite sample guarantee, aiming at the DRStQP. 

\begin{theorem}\label{finite sample guarantee with tails decay at an exponential rate}
    Let $p\geq 1$ be arbitrary, $\norm{\cdot} = \norm{\cdot}_F$, be the Frobenius norm, $\beta \in (0, 1)$ and suppose that the true measure $\mathbb P_{\rm true}$ satisfies Assumption~\ref{assumption of exponential decay tails}. Define 
 \begin{equation*}
        \theta_N(\beta) :=
        \left\{
        \begin{array}{ll}
        \left( \frac{\log(c_{1} \beta^{-1})}{c_{2} N} \right)^{1/\max\lbrace m, 2\rbrace}, & \text{if} \ \ N \geq \frac{\log(c_{1} \beta^{-1})}{c_{2}},\\
        \left( \frac{\log(c_{1} \beta^{-1})}{c_{2} N} \right)^{1/a}, & \text{if} \ \ N < \frac{\log(c_{1} \beta^{-1})}{c_{2}},
        \end{array}
        \right.
    \end{equation*}
    with positive constants $c_{1}$, $c_{2}$ only depending on $n$, $a$, and $\mathbb{E}_{\mathbb P_{\rm true}}[\exp(\| \widetilde\Qb \|_F^{a})]$. Then, for any $\y \in \mathbb{S}^{n-1}$, we obtain
    \begin{equation*}
        \mathbb P_{\rm true}^N\left[ \y^{\top}(\mathbb{E}_{\mathbb P_{\rm true}}[\widetilde\Qb] - \mathbb{E}_{\widehat{\mathbb P}_N}[\widetilde\Qb] )\y \leq \theta_N(\beta)  \right] \geq 1 - \beta\,.
    \end{equation*}
\end{theorem}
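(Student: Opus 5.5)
The plan is to combine the measure concentration result \cite[Theorem~3.5]{mohajerin2018data} (originating from \cite{fournier2015rate}) with the first-moment characterization of Theorem~\ref{expectation ball and wasserstein ball}. Everything is transferred to $\mathbb R^m$ via $\boldsymbol{\tilde\xi}=\operatorname{svec}(\widetilde\Qb)$, $m=\tfrac{n(n+1)}{2}$. Since $\operatorname{svec}$ is an isometry between $(\mathcal S^n,\norm{\cdot}_F)$ and $(\mathbb R^m,\norm{\cdot}_2)$, Assumption~\ref{assumption of exponential decay tails} for $\boldsymbol{\tilde\xi}$ is the same as finiteness of $\mathbb E_{\mathbb P_{\rm true}}[\exp(\norm{\widetilde\Qb}_F^a)]$. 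Likewise, Wasserstein distances computed with the Frobenius norm on $\mathcal S^n$ coincide with those computed with the Euclidean norm on $\mathbb R^m$.

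\textbf{Step 1 (coverage).} Under the light-tail assumption, \cite[Theorem~3.5]{mohajerin2018data} yields constants $c_1,c_2>0$, depending only on $m$ (hence on $n$), $a$ and the exponential moment. With $\theta_N(\beta)$ defined exactly as in the statement, it gives
\[
\mathbb P_{\rm true}^N\bigl[\,W_1(\mathbb P_{\rm true},\widehat{\mathbb P}_N)\le \theta_N(\beta)\,\bigr]\ge 1-\beta .
\]
That result is stated for $W_1$, so an arbitrary $p\ge1$ needs care. For Step~2 we only need a Wasserstein distance of some order $q\ge1$ to be bounded by $\theta_N(\beta)$, because Theorem~\ref{expectation ball and wasserstein ball} holds for every order. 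We therefore work with $q=1$ and note that the conclusion is independent of $p$. This is consistent with the remark after Theorem~\ref{expectation ball and wasserstein ball} and with the fact that the event in the claim involves only first moments.

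\textbf{Step 2 (from coverage to means).} On the event $\{W_1(\mathbb P_{\rm true},\widehat{\mathbb P}_N)\le\theta_N(\beta)\}$, we have $\mathbb P_{\rm true}\in\mathbb B_{\theta_N(\beta),1}(\widehat{\mathbb P}_N)$. Theorem~\ref{expectation ball and wasserstein ball} (equivalently the Jensen step \eqref{expectation ball and wasserstein ball equ 1}) then gives
\[
\norm{\mathbb E_{\mathbb P_{\rm true}}[\widetilde\Qb]-\mathbb E_{\widehat{\mathbb P}_N}[\widetilde\Qb]}_F\le\theta_N(\beta).
\]
The required finiteness of the first moment of $\mathbb P_{\rm true}$ follows from Assumption~\ref{assumption of exponential decay tails}.

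\textbf{Step 3 (quadratic form).} Fix $\y\in\mathbb S^{n-1}$ and put $\Ab:=\mathbb E_{\mathbb P_{\rm true}}[\widetilde\Qb]-\overline\Qb$. By Cauchy--Schwarz in the Frobenius inner product,
\[
\y^\top\Ab\y=\langle \y\y^\top,\Ab\rangle_F\le\norm{\y\y^\top}_F\,\norm{\Ab}_F .
\]
Here $\norm{\y\y^\top}_F=\y^\top\y=1$, as already observed in the proof of Theorem~\ref{thm:DRStQP}. Hence the event of Step~1 is contained in the event $\{\y^\top\Ab\y\le\theta_N(\beta)\}$, and monotonicity of $\mathbb P_{\rm true}^N$ gives the claimed bound $\ge 1-\beta$. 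The inclusion even holds uniformly in $\y$.

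The main obstacle is Step~1: correctly importing the concentration theorem. This means checking that its hypotheses (the light-tail condition with $a>1$ and dimension $m$) match, that the constants depend only on the stated quantities, and handling the role of $p$. The latter is resolved by reducing to $p=1$, where the mean-ball argument of Step~2 suffices. Steps~2 and~3 are routine given the earlier results.
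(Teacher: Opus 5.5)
Your proposal is correct and follows essentially the paper's route: it obtains coverage of $\mathbb P_{\rm true}$ by the Wasserstein ball of radius $\theta_N(\beta)$ around $\widehat{\mathbb P}_N$ from the Fournier--Guillin-based concentration bound in \cite[Theorems~3.4--3.5]{mohajerin2018data}, and then bounds the quadratic form deterministically on that event. The paper gets the second step by plugging $\mathbb P=\mathbb P_{\rm true}$ into the worst-case formula of Theorem~\ref{thm:linear-loss}, whereas you use Theorem~\ref{expectation ball and wasserstein ball} together with Cauchy--Schwarz, which is the same mechanism unpacked; your explicit reduction to order $1$ is in fact more careful than the paper, which places $\mathbb P_{\rm true}$ in the $p$-ball even though the cited concentration result only controls $W_1$ (harmless, since the conclusion does not involve $p$).
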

\begin{proof} Using Theorem~\ref{thm:linear-loss}, we know that for all $ \mathbb P \in \mathbb B_{\theta,p} (\widehat{\mathbb P}_N)$,
    \begin{equation*}
        \mathbb E_{\mathbb P}[\x^{\top}\widetilde\Qb\x] \leq \underset{\mathbb P \in \mathbb B_{\theta,p} (\widehat{\mathbb P}_N)}{\sup} \,\mathbb E_{\mathbb P}[\x^{\top}\widetilde\Qb\x] = \x^{\top}(\mathbb{E}_{\widehat{\mathbb P}_N}[\widetilde\Qb] + \theta \, \Ib)\x \quad \text{for all } \x\in \mathbb{R}^{n}\,,
    \end{equation*}
    which means that
$$
        \y^{\top} (\mathbb E_{\mathbb P}[\widetilde\Qb] - \mathbb{E}_{\widehat{\mathbb P}_N}[\widetilde\Qb] ) \y \leq \theta\quad \text{for all } \y \in \mathbb{S}^{n-1}\text{ and all }\mathbb P \in \mathbb B_{\theta_N(\beta),p} (\widehat{\mathbb P}_N)\, .
   $$
    Then, putting $\mathbb P=\mathbb P_{\rm true}$, the claim follows immediately from \cite[Theorems~3.4 and 3.5]{mohajerin2018data}.
    \end{proof}

\begin{remark}
Naturally, for any $\x \in \Delta$ we can find a $\y \in \mathbb S^{n - 1}$ via $\y = \tfrac{\x}{\|\x\|_2}$. Hence for the ``data-driven solution" or ``in-sample solution"
$$
\widehat \x := \underset{\x \in \Delta}{\arg\min} \biggl\{\underset{\mathbb P \in \mathbb B_{\theta_N(\beta), 1}(\widehat{\mathbb P}_N)}{\sup}\mathbb E_{\mathbb P}[\x^{\top}\widetilde \Qb \x]\biggr\}  = \underset{\x \in \Delta}{\arg\min} \biggl\{\x^{\top}(\mathbb E_{\widehat{\mathbb P}_N}[\widetilde \Qb ]+\theta_N(\beta)\Ib)\x\biggr\}
$$
with optimal value 
$$
\widehat J :=  \widehat \x^{\top}(\mathbb E_{\widehat{\mathbb P}_N}[\widetilde \Qb ]+\theta_N(\beta)\Ib) \widehat \x
$$
we can find an upper bound on the ``out-of-sample performance"
$$
\widehat\x^{\top}\mathbb{E}_{\mathbb P_{\rm true}}[\widetilde\Qb]\widehat\x
$$
with high probability:
$$
\mathbb P_{\rm true}^N\left[ \widehat \x^{\top}\mathbb E_{\mathbb P_{\rm true}}[\widetilde \Qb ]\widehat\x \leq \widehat J\right]\geq 1 - \beta\,.
$$
\end{remark}

\noindent The measure concentration provided in \cite[Theorem 2]{fournier2015rate} is the key method to yield the finite sample guarantee in Theorem \ref{finite sample guarantee with tails decay at an exponential rate}. We can notice that the curse of dimensionality occurs in Theorem \ref{finite sample guarantee with tails decay at an exponential rate} due to the radius $\theta_N(\beta)$ chosen of order $N^{-1/\max\lbrace 2, m \rbrace}$. Next, we provide finite sample guarantees with radius possessing order $N^{-2}$. Observing the negative result of Proposition~\ref{lem:Wishart-does-not-satisfy-exponential-decay}, we now provide  in Theorem~\ref{thm:sub-exponential-random-vectors-pointwise} a result which also applies to the Wishart Ensemble perturbation model, cf.\ Proposition~\ref{prop:Wishart are sub-exponential but not sub-Gaussian}.

\subsection{Concepts to reduce the curse of dimensionality}

We now turn to the data-driven calibration of the radius, denoted $\theta_N(\beta)$, with the goal of selecting it so that the Wasserstein ball centered at the empirical measure 
$\widehat{\mathbb P}_N=\sum_{i=1}^N\delta_{\widehat{\boldsymbol{\xi}}_i}$ 
contains the unknown true distribution $\mathbb P_{\rm true}$ with confidence level $1-\beta$. This coverage property ensures that the optimal value of the distributionally robust formulation provides, with high probability, a valid upper bound on the true out-of-sample performance. Our finite-sample guarantee Theorem~\ref{finite sample guarantee with tails decay at an exponential rate} relies on measure concentration results —specifically \cite[Theorem 2]{fournier2015rate}— but the resulting radius $\theta_N(\beta)$ scales as $\mathcal O(N^{-1/{\max\{2,m\}}})$, revealing a pronounced curse of dimensionality as the dimension $m$ grows. This limitation motivates the introduction in this section of additional structural assumptions for Theorem \ref{out-of-sample performance guarantee under p in [1, 2]}, to recover more favorable, potentially dimension-insensitive rates $\mathcal O(N^{-1/2})$, in an approach 
inspired by \cite[Theorem 1]{gao2023finite}.

\noindent In what follows, $\mathbb P_{\rm true}$ denotes the distribution of a random variable $\tilde \xi \in \mathbb R$ or a random vector $\boldsymbol{\tilde \xi} \in \mathbb R^m$, and we assume existence of all first moments. We will need two key concepts.

\begin{definition}[Sub-exponential random vectors]\label{definition of sub-exponential}
 A random variable $\tilde \xi \in \mathbb{R}$ is said to be sub-exponential if
    \begin{equation*}
        \| \tilde \xi \|_{\psi_{1}} := \inf \lbrace t > 0: \mathbb{E}_{\mathbb P_{\rm true}}[\exp(|\tilde \xi|/t)]  \leq 2\rbrace < \infty\,.
    \end{equation*}
    A random vector $\boldsymbol{\tilde \xi} \in \mathbb{R}^{m}$ is called sub-exponential  if
    \begin{equation*}
        \| \boldsymbol{\tilde \xi} \|_{\psi_{1}} := \sup_{\y \in \mathbb{S}^{m-1}} \| \y^{\top}\boldsymbol{\tilde \xi} \|_{\psi_{1}} < \infty\, .
    \end{equation*}
\end{definition}

\begin{definition}[Sub-Gaussian random vectors]\label{definition of sub-gaussian}
    A random variable $\tilde \xi \in \mathbb{R}$ is said to be sub-Gaussian if
    \begin{equation*}
        \| \tilde \xi \|_{\psi_{2}} := \inf \lbrace t > 0: \mathbb{E}_{\mathbb P_{\rm true}}[\exp(|\tilde \xi|^{2}/t^{2})]  \leq 2\rbrace < \infty\, .
     \end{equation*}
    A random vector $\boldsymbol{\tilde \xi} \in \mathbb{R}^{m}$ is called sub-Gaussian if
    \begin{equation*}
        \| \boldsymbol{\tilde \xi} \|_{\psi_{2}} := \sup_{\y \in \mathbb{S}^{m-1}} \| \y^   	{\top}\boldsymbol{\tilde \xi} \|_{\psi_{2}} < \infty\, .
    \end{equation*}
\end{definition}

\noindent The sub-exponential and sub-Gaussian norms 
${\norm .}_{\psi_q}$  are special cases of an 
Orlicz norm \cite{orlicz1932} of a real-valued random variable $\tilde \xi \in \mathbb R$, defined as\footnote{in slight abuse of notation, we also use subscripts to $\norm{\cdot}$ for the Orlicz norms, not only for $q$-norms. The distinction will be clear from the context.} 
$$
\| \tilde \xi \|_{\psi} := \inf \lbrace t > 0: \mathbb{E}_{\mathbb P_{\rm true}}[\psi(|\tilde \xi|/t)]  \leq 1\rbrace\,.
$$
\noindent The generator $\psi(z) = z^p$ yields the $p$-th moment, and the generator $\psi_i(z) = \exp(z^i) - 1$ yields the sub-exponential distributions (for $i=1$) and the sub-Gaussian distributions (for $i=2$).

\noindent From Definition~\ref{definition of sub-gaussian} we can see that all sub-Gaussian random vectors are also sub-exponential. In particular, all Gaussian random vectors satisfy these properties, and therefore the results apply especially to the GOE model as well. 
\begin{proposition}\label{lem:centered Gaussian is also sub-Gaussian} Any centered Gaussian random vector $\boldsymbol{\tilde \xi}\sim \mathcal N_m(\boldsymbol{0},\boldsymbol{\Sigma})$ is also sub-Gaussian.
\end{proposition}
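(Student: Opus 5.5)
The plan is to reduce the vector statement to a one-dimensional computation along each direction, using the definition of the sub-Gaussian norm of a random vector as a supremum over the unit sphere. Fix $\y \in \mathbb S^{m-1}$. Since $\boldsymbol{\tilde \xi}\sim \mathcal N_m(\boldsymbol{0},\boldsymbol{\Sigma})$, the linear functional $\tilde\zeta:=\y^{\top}\boldsymbol{\tilde \xi}$ is a centered univariate Gaussian, $\tilde\zeta \sim \mathcal N(0,\sigma_\y^2)$ with $\sigma_\y^2 = \y^{\top}\boldsymbol{\Sigma}\y$. If $\sigma_\y=0$ then $\tilde\zeta=0$ almost surely and $\|\tilde\zeta\|_{\psi_2}=0$. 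Otherwise, write $\tilde\zeta=\sigma_\y \tilde g$ with $\tilde g\sim\mathcal N(0,1)$.

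The first step is to compute the moment generating function of $\tilde g^2$ directly from the Gaussian density. For $s<\tfrac12$,
$$
\mathbb E[\exp(s\,\tilde g^2)] = \frac{1}{\sqrt{2\pi}}\int_{\mathbb R}\exp\bigl(-(\tfrac12 - s)z^2\bigr)\,\mathrm d z = (1-2s)^{-1/2}.
$$
This is the same kind of Gaussian integral already used in Proposition~\ref{lem:GOE-satisfies-exponential-decay}. Taking $s=\sigma_\y^2/t^2$, the condition $\mathbb E[\exp(\tilde\zeta^2/t^2)]\le 2$ becomes $(1-2\sigma_\y^2/t^2)^{-1/2}\le 2$, which is equivalent to $t^2\ge \tfrac{8}{3}\sigma_\y^2$. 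Hence
$$
\|\y^{\top}\boldsymbol{\tilde \xi}\|_{\psi_2} = \sqrt{8/3}\,\sigma_\y.
$$

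The second step is to take the supremum over $\y\in\mathbb S^{m-1}$. Since $\sigma_\y^2=\y^{\top}\boldsymbol{\Sigma}\y\le\lambda_{\max}(\boldsymbol{\Sigma})$, we obtain
$$
\|\boldsymbol{\tilde \xi}\|_{\psi_2}=\sqrt{8/3}\,\lambda_{\max}(\boldsymbol{\Sigma})^{1/2}<\infty,
$$
which is the claim. As a byproduct, this also gives the sub-exponential property noted before the statement.

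The only point requiring care is the possibly degenerate covariance. In that case $\boldsymbol{\tilde \xi}$ has no density on $\mathbb R^m$, so the argument deliberately avoids the $m$-dimensional density and uses only the one-dimensional projections, which are Gaussian (possibly Dirac at $0$) for any positive semidefinite $\boldsymbol{\Sigma}$. Apart from this, the proof is a routine calculation and presents no real obstacle.
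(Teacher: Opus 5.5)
Your proof is correct and follows the same route as the paper: reduce to the one-dimensional projections $\y^{\top}\boldsymbol{\tilde \xi}\sim\mathcal N(0,\y^{\top}\sSigma\y)$. The only difference is in the univariate step. The paper cites \cite[Example~2.5.8]{vershynin2018high} for each projection separately, whereas you compute $\|\y^{\top}\boldsymbol{\tilde \xi}\|_{\psi_2}=\sqrt{8/3}\,(\y^{\top}\sSigma\y)^{1/2}$ explicitly. This gives the uniform bound $\sqrt{8/3}\,\lambda_{\max}(\sSigma)^{1/2}$ over $\mathbb S^{m-1}$, which the definition of a sub-Gaussian vector actually requires and which the paper's argument leaves implicit.
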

\begin{proof} By \cite[Example~2.5.8]{vershynin2018high}, any centered Gaussian random variable $\tilde \xi \sim \mathcal N(0,\sigma^2)$ is sub-Gaussian. For any arbitrary $\y \in \mathbb S^{m-1}$, we have $\y^{\top}\boldsymbol{\tilde \xi} \sim \mathcal N(0,\y^{\top}\boldsymbol{\Sigma}\y)$. By the previous step, $\y^{\top}\boldsymbol{\tilde \xi}$ is a sub-Gaussian random variable, which yields the claim.
\end{proof}

\noindent A similar result holds for the Wishart distribution used in Subsection~\ref{exper}:
\begin{proposition}\label{prop:Wishart are sub-exponential but not sub-Gaussian}
Let $\widetilde \Wb \sim \mathcal W_n(\Ib, n)$, then $\boldsymbol{\tilde \xi}:=\operatorname{svec}(\widetilde \Wb)$ is sub-exponential but not sub-Gaussian.   
\end{proposition}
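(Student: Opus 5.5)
The claim has two parts, and I will treat them separately: first that $\boldsymbol{\tilde \xi}=\svec(\widetilde\Wb)$ is sub-exponential, then that it is not sub-Gaussian. In both parts I will use the structure $\widetilde\Wb=\widetilde\Yb\widetilde\Yb\T=\sum_{k=1}^n \widetilde\y_k\widetilde\y_k\T$, where the $\widetilde\y_k\sim\mathcal N_n(\boldsymbol 0,\Ib)$ are i.i.d. I will also use the isometry $\y\T\svec(\widetilde\Wb)=\langle \Ab,\widetilde\Wb\rangle_F$, where $\Ab\in\mathcal S^n$ with $\|\Ab\|_F=\|\y\|_2=1$ is defined by $\svec(\Ab)=\y$.

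\emph{Sub-exponential part.} Fix $\y\in\mathbb S^{m-1}$ and the corresponding $\Ab$. Then
\[
\y\T\boldsymbol{\tilde\xi}=\sum_{k=1}^n \widetilde\y_k\T\Ab\widetilde\y_k .
\]
Diagonalize $\Ab=\Ub\,\mathrm{diag}(\lambda_1,\dots,\lambda_n)\Ub\T$. By rotation invariance of the standard Gaussian, each summand equals in law $\sum_j\lambda_j \tilde z_{jk}^2$ with $\tilde z_{jk}$ i.i.d.\ $\mathcal N(0,1)$. So $\y\T\boldsymbol{\tilde\xi}$ is a finite linear combination of $n^2$ chi-squared variables with one degree of freedom, and $\sum_j\lambda_j^2=1$.

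Each $\tilde z^2$ is sub-exponential with $\|\tilde z^2\|_{\psi_1}=\|\tilde z\|_{\psi_2}^2=:K$, a universal constant (cf.\ \cite{vershynin2018high}). By the triangle inequality for the Orlicz norm,
\[
\|\y\T\boldsymbol{\tilde\xi}\|_{\psi_1}\le \sum_{j,k}|\lambda_j|K = nK\|\boldsymbol\lambda\|_1\le nK\sqrt n\,\|\boldsymbol\lambda\|_2=n^{3/2}K .
\]
This bound is uniform in $\y$, so the supremum over $\mathbb S^{m-1}$ is finite. A sharper bound via Bernstein is available but unnecessary. Centering is not required by Definition~\ref{definition of sub-exponential}, so the nonzero mean $n\,\mathrm{tr}\Ab$ causes no trouble.

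\emph{Not sub-Gaussian part.} Here it suffices to exhibit a single direction in which the projection fails to be sub-Gaussian. Take $\y=\e_1$, which gives $\y\T\boldsymbol{\tilde\xi}=\widetilde w_{11}\sim\chi^2(n)$, as already computed in the proof of Proposition~\ref{lem:Wishart-does-not-satisfy-exponential-decay}. For every $t>0$,
\[
\EE[\exp(\widetilde w_{11}^2/t^2)]=\frac{1}{2^{n/2}\Gamma(n/2)}\int_0^\infty \zeta^{n/2-1}\exp\!\big(\zeta^2/t^2-\zeta/2\big)\,\mathrm d\zeta=\infty ,
\]
because the exponent $\zeta^2/t^2-\zeta/2$ tends to $+\infty$. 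Hence the set in Definition~\ref{definition of sub-gaussian} is empty, $\|\widetilde w_{11}\|_{\psi_2}=\infty$, and therefore $\|\boldsymbol{\tilde\xi}\|_{\psi_2}=\infty$.

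The only step needing care is the uniformity over $\y$ in the sub-exponential part. The plan is to pass through the spectral decomposition of $\Ab$ and use rotation invariance of each column of $\widetilde\Yb$, so that every projection reduces to a weighted sum of independent $\chi^2(1)$ variables with weights bounded in terms of $\|\Ab\|_F=1$. The non-sub-Gaussian part is a direct computation.
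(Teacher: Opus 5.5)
Your proof is correct. The non-sub-Gaussian half is exactly the paper's argument: the direction $\e_1$, $\widetilde w_{11}\sim\chi^2(n)$, and divergence of $\EE[\exp(\widetilde w_{11}^2/t^2)]$ for every $t>0$.

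The sub-exponential half takes a different route. The paper works entrywise: each $\widetilde w_{ij}=\sum_\ell \widetilde y_{i\ell}\widetilde y_{j\ell}$ is a sum of products of sub-Gaussian variables, hence sub-exponential by \cite[Lemma~2.7.7]{vershynin2018high}. Then $\y\T\svec(\widetilde\Wb)$ is a finite linear combination of such entries. You instead identify $\y$ with a unit-Frobenius matrix $\Ab$, diagonalize it, and use rotation invariance to write the projection exactly as $\sum_{j,k}\lambda_j\tilde z_{jk}^2$. This gives the explicit bound $\|\y\T\boldsymbol{\tilde\xi}\|_{\psi_1}\le n^{3/2}\|\tilde z\|_{\psi_2}^2$.

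The paper's route is more elementary, since it needs no spectral step. However, it leaves implicit the uniformity over $\mathbb S^{m-1}$ that Definition~\ref{definition of sub-exponential} requires. Closing that via $\|\y\T\boldsymbol{\tilde\xi}\|_{\psi_1}\le\|\y\|_1\max_i\|\tilde\xi_i\|_{\psi_1}\le\sqrt m\max_i\|\tilde\xi_i\|_{\psi_1}$ gives a constant of order $n^2$.

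Your route buys three things:
\begin{itemize}
\item It makes uniformity explicit.
\item It gives order $n^{3/2}$, which is sharp: for $\Ab=\Ib/\sqrt n$ the mean is already $n^{3/2}$, and $\|X\|_{\psi_1}\ge \EE X/\log 2$ by Jensen.
\item It exposes the weighted-$\chi^2$ structure with $\sum_j\lambda_j^2=1$, which a sharper Bernstein/Hanson--Wright-type bound would exploit.
\end{itemize}
Rotation invariance is convenient but not essential. The triangle-inequality step only needs each $(\Ub\T\widetilde\y_k)_j$ to be sub-Gaussian, so your argument extends beyond the Gaussian case just as the paper's does.
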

\begin{proof}First, we will show that $\boldsymbol{\tilde \xi}$ is sub-exponential. We have that 
$$
\widetilde w_{ii} = \sum_{\ell=1}^n \widetilde y_{i\ell}^2 \sim \chi^2(n)\,,
$$
and
$$
\widetilde w_{ij} = \sum_{k=1}^n \widetilde y_{i\ell}\widetilde y_{j\ell}\,, \quad \text{ for } i\neq j\,.
$$
Observe that $\widetilde y_{ij} \sim \mathcal N(0,1)$ are in particular sub-Gaussian. By \cite[Lemma~2.7.7]{vershynin2018high}, the product of two sub-Gaussian random variables is sub-exponential. Thus, for $\ell\in[n]$, the random variables $\widetilde y_{i\ell}^2$ and $\widetilde y_{i\ell}\widetilde y_{j\ell}$ are sub-exponential. Finite linear combinations of sub-exponential random variables are also sub-exponential, thus $\widetilde w_{ij}$ are sub-exponential for $1\leq i \leq j \leq n$. For any $\y \in \mathbb S^{m-1}$ we have that $\y^{\top}\operatorname{svec}(\widetilde \Wb)$ is a finite linear combination of sub-exponential random variables. Therefore, $\boldsymbol{\tilde \xi}$ is sub-exponential. Now we will show that $\boldsymbol{\tilde \xi}$ is not sub-Gaussian. By definition, a random variable $\boldsymbol{\tilde \xi}\in \mathbb R^m$ is sub-Gaussian, if $\y^{\top}\boldsymbol{\tilde \xi}$ is sub-Gaussian for every $\y \in \mathbb S^{m-1}$. Take $\y = \e_1$, then 
$$
\y^{\top}\boldsymbol{\tilde \xi} = \y^{\top}\operatorname{svec}(\widetilde\Wb) = \widetilde w_{11} \sim \chi^2(n)\,.
$$
We have 
$$
\begin{aligned}
||\widetilde w_{11}||_{\psi_2} &=
\inf \lbrace t > 0: \mathbb{E}_{\mathbb P_{\rm true}}[\exp(\widetilde w_{11}^2/t^2)]  \leq 2\rbrace \\
&=\inf \biggl\{ t > 0: \int_0^{\infty}\exp(w^2/t^2) \displaystyle\frac{1}{2^{n/2}\Gamma(n/2)}w^{n/2 -1}\exp(-w/2) \, \mathrm{d}w \leq 2\biggr\}\,.
\end{aligned}
$$
For every fixed $t>0$, above integral is infinite, so $\norm{\widetilde w_{11}}_{\psi_2} = \infty$ and $\boldsymbol{\tilde \xi}$ is not sub-Gaussian.   
\end{proof}

\subsection{Guarantees; role of the transportation-information inequality}

\noindent {Under the assumptions discussed in the previous subsection, we are able to formulate some out-of-sample performance guarantees, see
Theorem~\ref{thm:sub-exponential-random-vectors-pointwise}. But before let us proceed to 
consider distributions satisfying the transportation-information inequality.}

\noindent Recall that by the Radon-Nikodym theorem, for any two probability measures $\mathbb P, \mathbb P^{\prime} \in \mathcal P(\mathbb R^m)$ such that 
$\mathbb P \ll \mathbb P^{\prime}$, the Radon-Nikodym derivative $\tfrac{\mathrm{d}\mathbb P}{\mathrm{d}\mathbb P^{\prime}}$ exists. In this case, the Kullback-Leibler divergence (also known as relative entropy) of $\mathbb P$ from $\mathbb P^{\prime}$ is defined as
$$
D(\mathbb P || \mathbb P^{\prime}) := \int_{\mathbb R^m} \log\left(\frac{\mathrm{d} \mathbb P}{\mathrm{d}\mathbb P^{\prime}}\right) \, \mathrm{d}\mathbb P\,.
$$
In case that $\mathbb P$ is not absolutely continuous with respect to $\mathbb P^{\prime}$, we set $D(\mathbb P || \mathbb P^{\prime}) = \infty$.

\begin{assumption}\label{ass:transportation-information-inequality} There exists a $c > 0$ such that the true distribution $\mathbb P_{\rm true}\in \mathcal{P}_{p}(\mathbb{R}^{m})$ satisfies
    \begin{equation*}
        W_{p}(\mathbb P, \mathbb P_{\rm true}) \leq \sqrt{2cD(\mathbb P || \mathbb P_{\rm true})}\,, \quad \text{for all } \mathbb P \in \mathcal{P}_{p}(\mathbb{R}^{m})\,.
    \end{equation*}
\end{assumption}


\noindent We observe that the GOE model satisfies Assumption~\ref{ass:transportation-information-inequality} with $c=2$, and any $p \in [1,2]$:
\begin{proposition}\label{lem:GOE-satisfies-transportation-information inequality}
Let $\mathbb P_{\rm true}$ denote the true distribution of $\boldsymbol{\tilde \xi}$, where $\boldsymbol{\tilde \xi}:=\operatorname{svec}(\widetilde \Gb)$, and $\widetilde \Gb \sim \operatorname{GOE}(n)$. Then $\mathbb P_{\rm true}$ satisfies Assumption \ref{ass:transportation-information-inequality} for $c=2$ and any $p\in [1,2]$.
\end{proposition}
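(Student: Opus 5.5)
By Proposition~\ref{lem:GOE-satisfies-exponential-decay}, $\mathbb P_{\rm true}=\mathcal N_m(\boldsymbol 0,2\Ib)$, so the natural route is Talagrand's transportation inequality for Gaussian measures combined with monotonicity of $W_p$ in $p$. The plan has three steps.

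\textbf{Step 1: Talagrand's $T_2$ inequality.} Recall that the standard Gaussian $\gamma_m=\mathcal N_m(\boldsymbol 0,\Ib)$ satisfies, with respect to the Euclidean norm,
$$
W_2(\mathbb P,\gamma_m)\le \sqrt{2D(\mathbb P\|\gamma_m)}
$$
for all $\mathbb P\in\mathcal P_2(\mathbb R^m)$. This is Talagrand's theorem, obtainable for instance from the Gaussian log-Sobolev inequality via the Otto--Villani argument. I will cite it rather than reprove it.

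\textbf{Step 2: scaling to variance $2$.} Let $S(\z)=\sqrt2\,\z$, so that $\mathbb P_{\rm true}=S_\#\gamma_m$. Given $\mathbb P$, put $\mathbb Q:=(S^{-1})_\#\mathbb P$.
\begin{itemize}
\item The KL divergence is invariant under the bijection $S$, so $D(\mathbb P\|\mathbb P_{\rm true})=D(\mathbb Q\|\gamma_m)$.
\item Any coupling of $(\mathbb Q,\gamma_m)$ pushed forward by $S\times S$ is a coupling of $(\mathbb P,\mathbb P_{\rm true})$, and this map is a bijection between the two sets of couplings. The transport cost scales by $2$ under $S$, so $W_2(\mathbb P,\mathbb P_{\rm true})=\sqrt2\,W_2(\mathbb Q,\gamma_m)$.
\end{itemize}
Combining these with Step 1 gives
$$
W_2(\mathbb P,\mathbb P_{\rm true})\le\sqrt2\sqrt{2D(\mathbb P\|\mathbb P_{\rm true})}=\sqrt{2\cdot 2\, D(\mathbb P\|\mathbb P_{\rm true})},
$$
which is Assumption~\ref{ass:transportation-information-inequality} with $c=2$ for $p=2$.

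\textbf{Step 3: extension to $p\in[1,2]$.} For any coupling $\pi$ and any $p\le2$, Jensen's inequality (equivalently Lyapunov's inequality) gives
$$
\bigl(\mathbb E_\pi\|\boldsymbol{\xi}-\boldsymbol{\xi}'\|^p\bigr)^{1/p}\le\bigl(\mathbb E_\pi\|\boldsymbol{\xi}-\boldsymbol{\xi}'\|^2\bigr)^{1/2}.
$$
Taking the infimum over $\pi$ yields $W_p\le W_2$.

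It remains to handle $\mathbb P\in\mathcal P_p\setminus\mathcal P_2$. If $D(\mathbb P\|\mathbb P_{\rm true})=\infty$ there is nothing to prove. If $D(\mathbb P\|\mathbb P_{\rm true})<\infty$, the Donsker--Varadhan variational formula, applied with test function $\varepsilon\|\boldsymbol{\xi}\|^2$ for small $\varepsilon>0$, shows that $\mathbb P$ has a finite second moment, because the Gaussian has finite exponential-square moments. So Step 2 applies in every relevant case.

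The only genuinely nontrivial ingredient is Talagrand's inequality itself, which will be quoted from the literature. The remaining care points are checking that the scaling constant comes out as exactly $c=2$ rather than $\sqrt2$, and the finite-moment caveat handled in Step 3.
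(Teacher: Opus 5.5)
Your proposal is correct and follows essentially the same route as the paper: Talagrand's inequality for $\mathcal N_m(\boldsymbol{0},\Ib)$, transfer to $\mathcal N_m(\boldsymbol{0},2\Ib)$ via the scaling $\z\mapsto\sqrt{2}\,\z$ (homogeneity of $W_2$ plus invariance of the Kullback--Leibler divergence, which gives exactly $c=2$), and then $W_p\le W_2$ for $p\in[1,2]$. Two of your steps are slightly more careful than the paper's: you bound $W_p$ by $W_2$ coupling by coupling rather than invoking an optimal coupling, and you use the Donsker--Varadhan formula to show that finite divergence forces a finite second moment.
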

\begin{proof}
By Talagrand \cite[Theorem 1.1]{talagrand1996transportation}, for $\mathbb P^{\prime} = \mathcal N_m(\boldsymbol{0},\Ib)$ and $p=2$ we have 
$$
W_2(\mathbb P, \mathbb P^{\prime}) \leq \sqrt{2D(\mathbb P||\mathbb P^{\prime})} \quad \text{for all } \mathbb P \ll \mathbb P^{\prime}\,,
$$
\noindent
with optimal constant $c=1$. Observe that in this case, if $D(\mathbb P||\mathbb P^{\prime}) < \infty$, we automatically obtain that $\mathbb P \in \mathcal P_2(\mathbb R^m)$ since $W_2(\mathbb P, \mathbb P^{\prime})$ is 
finite and for Gaussian measures all moments exist, in analogy of the
inclusion ${\mathbb B}_{\theta,p}(\widehat{\mathbb P}_N) \subseteq \mathcal P_p(\R^m)$ discussed before introducing~\eqref{WDRO}. By direct computation it can be verified that if $T(\z) = a\z$, with $a>0$ then for any $p\geq 1$ we have
$$
W_p(T_{\#}\mathbb P,T_{\#}\mathbb P^{\prime}) = a  W_p(\mathbb P,\mathbb P^{\prime})\,, \quad \text{for all } \mathbb P, \mathbb P^{\prime} \in \mathcal P(\mathbb R^m)\,.
$$
This implies that for $\mathbb P^{\prime} = \mathcal N_m(\boldsymbol{0},\Ib)$ we have 
$$
W_2(T_{\#}\mathbb P,T_{\#} \mathbb P^{\prime}) \leq a\sqrt{2D(\mathbb P||\mathbb P^{\prime})} \quad \text{for all } \mathbb P \in \mathcal P(\mathbb R^m)\,.
$$
We know that the Kullback-Leibler divergence is invariant under affine transformations $T(\z) = a\z + b$, i.e.,
$$
D(\mathbb P||\mathbb P^{\prime}) = D(T_{\#}\mathbb P||T_{\#}\mathbb P^{\prime})\,, \quad \text{for all } \mathbb P, \mathbb P^{\prime} \in \mathcal P(\mathbb R^m)\,,
$$
and, moreover, $\mathbb P \in \mathcal P(\mathbb R^m)$ if and only if 
$T_{\#}\mathbb P \in \mathcal P(\mathbb R^m)$, thus we can rewrite the previous inequality as
$$
W_2(T_{\#}\mathbb P,T_{\#} \mathbb P^{\prime}) \leq a\sqrt{2D(T_{\#}\mathbb P||T_{\#}\mathbb P^{\prime})} \quad \text{for all } T_{\#}\mathbb P \in \mathcal P(\mathbb R^m)\,.
$$
It is well-known that if $\boldsymbol{\tilde \xi^{\prime}}\sim \mathcal N_m(\boldsymbol{0},\Ib)$ then $\boldsymbol{\tilde \xi}:=\sqrt{2}\boldsymbol{\tilde \xi^{\prime}}\sim \mathcal N_m(\boldsymbol{0},2\Ib)$. Set $\mathbb P_{\rm true} = \mathcal N_m(\boldsymbol{0},2\Ib)$, then for $a = \sqrt{2}$ we have $T_{\#}\mathbb P^{\prime} = \mathbb P_{\rm true}$ and this implies 
$$
W_2(T_{\#}\mathbb P,\mathbb P_{\rm true}) \leq \sqrt{4D(T_{\#}\mathbb P||\mathbb P_{\rm true})} \quad \text{for all } T_{\#}\mathbb P \in \mathcal P(\mathbb R^m)\,.
$$
In other words, $\boldsymbol{\tilde \xi}$ satisfies Assumption \ref{ass:transportation-information-inequality} for $c=2$ and $p=2$. Finally, we will show that for any $p\in [1,2]$ and $\mathbb P, \mathbb P^{\prime} \in \mathcal P(\mathbb R^m)$ we have 
$$
W_p(\mathbb P, \mathbb P^{\prime}) \leq W_2(\mathbb P, \mathbb P^{\prime})\,,
$$
which will yield the claim. Let $p \in [1,2]$ be arbitrary. By definition, 
$$
W_p(\PP, \PP^{\prime}):= \left(\underset{\pi \in \Pi(\PP, \PP^{\prime})}{\inf}\displaystyle\int_{\mathbb R^m\times \mathbb R^m} \,  \norm{\boldsymbol{\xi}-\boldsymbol{\xi^{\prime}}}^p\, \mathrm{d} \pi (\boldsymbol{\xi},\boldsymbol{\xi^{\prime}})\right)^{1/p}\leq  \left(\displaystyle\int_{\mathbb R^m\times \mathbb R^m} \,  \norm{\boldsymbol{\xi}-\boldsymbol{\xi^{\prime}}}^p\, \mathrm{d} \pi_2^* (\boldsymbol{\xi},\boldsymbol{\xi^{\prime}})\right)^{1/p}\,,
$$
where $\pi_2^*$ is the coupling which minimizes $W_2(\mathbb P, \mathbb P^{\prime})$ and exists by the lower semicontinuity of the norm, see 
\cite{villani2008optimal}. Recall the monotonicity of the $L^p$-norm, i.e. if $p\leq q$ then for any non-negative random vector $\boldsymbol{\tilde \zeta}$ with distribution $\pi$ we have 
$\|\boldsymbol{\tilde \zeta}\|_{L^p(\pi)} \leq \|\boldsymbol{\tilde \zeta}\|_{L^q(\pi)}$. This implies 
$$
\left(\displaystyle\int_{\mathbb R^m\times \mathbb R^m} \,  \norm{\boldsymbol{\xi}-\boldsymbol{\xi^{\prime}}}^p\, \mathrm{d} \pi_2^* (\boldsymbol{\xi},\boldsymbol{\xi^{\prime}})\right)^{1/p}\leq  \left(\displaystyle\int_{\mathbb R^m\times \mathbb R^m} \,  \norm{\boldsymbol{\xi}-\boldsymbol{\xi^{\prime}}}^2\, \mathrm{d} \pi_2^* (\boldsymbol{\xi},\boldsymbol{\xi^{\prime}})\right)^{1/2}=:W_2(\mathbb P, \mathbb P^{\prime})\,.
$$
The claim follows.\end{proof}

\noindent Now we will show that the Wishart Ensemble model does not satisfy Assumption \ref{ass:transportation-information-inequality}. Recall that the Lipschitz modulus of a function $g:\mathbb R^m\to\mathbb R$ is defined as
$$
\|g\|_{\rm Lip} := \sup_{\boldsymbol{\xi}\neq \boldsymbol{\xi^{\prime}}}\frac{|g(\boldsymbol{\xi})-g(\boldsymbol{\xi^{\prime}})|}{\|\boldsymbol{\xi}-\boldsymbol{\xi^{\prime}\|}_2}\,.
$$
\begin{proposition}\label{lem:Wishart-does-not-satisfy-transportation-information inequality}
Let $\mathbb P_{\rm true}$ denote the true distribution of $\boldsymbol{\tilde \xi}$, where $\boldsymbol{\tilde \xi}:=\operatorname{svec}(\widetilde \Wb)$, and $\widetilde \Wb \sim \mathcal W_n(\Ib,n)$. Then, $\mathbb P_{\rm true}$ does not satisfy Assumption~\ref{ass:transportation-information-inequality}.
\end{proposition}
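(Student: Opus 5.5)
We argue by contradiction, using the fact that a transportation-information inequality forces Gaussian concentration of Lipschitz functions. This is why the Lipschitz modulus $\|g\|_{\rm Lip}$ was recalled just before the statement.

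Suppose Assumption~\ref{ass:transportation-information-inequality} holds for some $c>0$ and some $p\ge 1$. Since $W_1\le W_p$ by the same $L^p$-monotonicity argument used in the proof of Proposition~\ref{lem:GOE-satisfies-transportation-information inequality}, $\mathbb P_{\rm true}$ then satisfies the $T_1$ inequality $W_1(\mathbb P,\mathbb P_{\rm true})\le\sqrt{2cD(\mathbb P\|\mathbb P_{\rm true})}$ for all $\mathbb P$. Here the Wasserstein distance is taken with respect to the Euclidean norm on $\mathbb R^m$, which corresponds to the Frobenius norm via the isometry $\svec$.

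We then invoke the Bobkov--Götze characterization of $T_1$. It states that the inequality above is equivalent to
\begin{equation*}
\mathbb E_{\mathbb P_{\rm true}}\bigl[\exp\bigl(\lambda(g(\boldsymbol{\tilde\xi})-\mathbb E_{\mathbb P_{\rm true}}[g(\boldsymbol{\tilde\xi})])\bigr)\bigr]\le \exp\bigl(c\lambda^2\|g\|_{\rm Lip}^2/2\bigr)
\end{equation*}
for all $\lambda\in\mathbb R$ and all Lipschitz $g$. The direction we need is short to prove. Apply Kantorovich--Rubinstein duality, $\mathbb E_{\mathbb P}[g]-\mathbb E_{\mathbb P_{\rm true}}[g]\le\|g\|_{\rm Lip}W_1(\mathbb P,\mathbb P_{\rm true})$, combine it with $T_1$ and the bound $\sqrt{2cD}\,L\le D/\lambda+c\lambda L^2/2$, and then take the supremum over $\mathbb P$ using the Donsker--Varadhan variational formula $\log\mathbb E_{\mathbb P_{\rm true}}[e^{\lambda g}]=\sup_{\mathbb P}\{\lambda\mathbb E_{\mathbb P}[g]-D(\mathbb P\|\mathbb P_{\rm true})\}$. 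It would suffice to cite this step.

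Now choose the $1$-Lipschitz linear functional $g(\boldsymbol\xi)=\e_1^\top\boldsymbol\xi$, so that $g(\boldsymbol{\tilde\xi})=\widetilde w_{11}\sim\chi^2(n)$, as computed in the proof of Proposition~\ref{lem:Wishart-does-not-satisfy-exponential-decay}. The display above then says that $\widetilde w_{11}-n$ has a sub-Gaussian moment generating function. By standard equivalences this implies $\|\widetilde w_{11}\|_{\psi_2}<\infty$, i.e.\ $\widetilde w_{11}$ is sub-Gaussian (see \cite{vershynin2018high}). This contradicts the proof of Proposition~\ref{prop:Wishart are sub-exponential but not sub-Gaussian}, which shows that $\|\widetilde w_{11}\|_{\psi_2}=\infty$.

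A more elementary route avoids the $\psi_2$ equivalence. The moment generating function of $\chi^2(n)$ is $(1-2\lambda)^{-n/2}$, which is infinite for $\lambda\ge 1/2$. This alone contradicts the bound $\exp(c\lambda^2/2+\lambda n)<\infty$ valid for all $\lambda$. I would present this version.

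The main obstacle is justifying the passage from Assumption~\ref{ass:transportation-information-inequality} to the moment generating function bound, namely the duality and variational-formula step. I would cite Bobkov--Götze, or give the three-line derivation above, and be careful that the Lipschitz constant is taken with respect to the same Euclidean norm used in the Wasserstein cost.
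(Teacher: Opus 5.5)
Your proposal is correct and follows essentially the same route as the paper. You reduce to $p=1$ via $W_1\le W_p$, invoke the Bobkov--G\"otze characterization of $T_1$ with the $1$-Lipschitz test function $g(\boldsymbol\xi)=\e_1^\top\boldsymbol\xi$ (centered, $\widetilde w_{11}-n$), and contradict the fact that $\widetilde w_{11}\sim\chi^2(n)$ is not sub-Gaussian. Your two variations are small improvements in self-containedness: the explicit blow-up of the $\chi^2(n)$ moment generating function $(1-2\lambda)^{-n/2}$ at $\lambda\ge 1/2$ replaces the paper's appeal to \cite[Proposition 2.5.2]{vershynin2018high} together with Proposition~\ref{prop:Wishart are sub-exponential but not sub-Gaussian}, and the sketched Kantorovich--Rubinstein plus Donsker--Varadhan derivation replaces the citation for the direction of Bobkov--G\"otze that is actually needed.
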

\begin{proof}
    By \cite[Theorem 3.1]{bobkov1999exponential}, $\mathbb P_{\rm true}$ satisfies Assumption~\ref{ass:transportation-information-inequality} for $p=1$ and some $c>0$ if and only if for every function $g:\mathbb R^m \to \mathbb R$ with $||g||_{\rm Lip}\leq 1$ and $\mathbb E_{\mathbb P_{\rm true}}[g(\boldsymbol{\tilde \xi})] = 0$ we have
$$
\mathbb E_{\mathbb P_{\rm true}}\exp([tg(\boldsymbol{\tilde \xi}))] \leq \exp\left(\frac{ct^2}{2}\right)\, \quad \text{for every } t \in \mathbb R\,.
$$    
The function $g(\boldsymbol{\tilde \xi}):= \e_1^{\top}\boldsymbol{\tilde \xi} - n = \widetilde w_{11} - n$ satisfies the above assumption, since $||g||_{\rm Lip} = \|\e_1\|_2 = 1$ and 
$$
\mathbb E_{\mathbb P_{\rm true}}[g(\boldsymbol{\tilde \xi})] = \mathbb E_{\mathbb P_{\rm true}}[\widetilde w_{11}] - n = 0\,.
$$
By \cite[Proposition 2.5.2]{vershynin2018high}, this is precisely the characterization of a sub-Gaussian random variable via moment generating function. As we have seen in Proposition \ref{prop:Wishart are sub-exponential but not sub-Gaussian}, the random variable $\widetilde w_{11}-n$ is not sub-Gaussian, thus $\mathbb P_{\rm true}$ cannot fulfill Assumption \ref{ass:transportation-information-inequality} for $p=1$. This implies that $\mathbb P_{\rm true}$ cannot fulfill Assumption \ref{ass:transportation-information-inequality} for any higher $p\geq 1$.
\end{proof}

\noindent We will need the following auxiliary result.
\begin{lemma}\label{growth condition for quadratic form}
    Let $p \in [1, 2]$ and $\Qb \in \mathcal S^{n}$. Then,  there exist $M, L > 0$ such that
    \begin{equation*}
        |\y^{\top} \Qb \y| \leq M + L \| \operatorname{svec}(\Qb)\|^{p}_2\,,\quad\text{for all } (\y,\Qb) \in \mathbb{S}^{n-1} \times \mathcal S^{n}\, .
    \end{equation*}
\end{lemma}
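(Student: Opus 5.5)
We argue via Cauchy--Schwarz in the Frobenius inner product and an elementary inequality comparing $t$ with $t^p$. The goal is constants $M,L>0$ that are \emph{uniform} in $(\y,\Qb)$; in fact we will take $M=L=1$.

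First, for $\y\in\mathbb S^{n-1}$ we write $\y^\top\Qb\y=\langle \y\y^\top,\Qb\rangle_F=\operatorname{svec}(\y\y^\top)^\top\operatorname{svec}(\Qb)$. The second equality is the isometry property of $\operatorname{svec}$ noted after the definition of \eqref{DRStQP}. Cauchy--Schwarz in $\R^m$ then gives
\[
|\y^\top\Qb\y|\le \|\operatorname{svec}(\y\y^\top)\|_2\,\|\operatorname{svec}(\Qb)\|_2 = (\y^\top\y)\,\|\operatorname{svec}(\Qb)\|_2=\|\operatorname{svec}(\Qb)\|_2 ,
\]
using $\|\operatorname{svec}(\y\y^\top)\|_2=\|\y\y^\top\|_F=\y^\top\y=1$, exactly as in the proof of Theorem~\ref{thm:DRStQP}. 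Alternatively, one may bound $|\y^\top\Qb\y|\le\max_i|\lambda_i(\Qb)|\le\|\Qb\|_F$.

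Second, we convert the linear bound into the growth bound with exponent $p\ge1$. Put $t:=\|\operatorname{svec}(\Qb)\|_2\ge0$. If $t\le1$, then $t\le 1$. If $t>1$, then $t\le t^p$ because $p\ge1$. In both cases $t\le 1+t^p$. Hence $|\y^\top\Qb\y|\le 1+\|\operatorname{svec}(\Qb)\|_2^p$, so $M=L=1$ works.

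There is no genuine obstacle. The only point requiring care is that the constants must not depend on $\Qb$. This is ensured because the case split depends only on the scalar $t$, and the Cauchy--Schwarz step is uniform over the sphere. The restriction $p\le2$ is not needed for this bound; it is presumably imposed only for compatibility with the subsequent guarantee, where a growth condition of order $p$ is required.
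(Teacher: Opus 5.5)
Your proof is correct and follows essentially the same route as the paper: Cauchy--Schwarz applied to $\operatorname{svec}(\y\y^\top)^\top\operatorname{svec}(\Qb)$ with $\|\operatorname{svec}(\y\y^\top)\|_2=1$, followed by a scalar inequality turning $t=\|\operatorname{svec}(\Qb)\|_2$ into $M+Lt^p$. The only difference is cosmetic. The paper uses Young's inequality $t\le 1/q+t^p/p$ (with $q$ the H\"older conjugate, treating $p=1$ separately as trivial), which gives $M=1/q$, $L=1/p$. Your case split $t\le 1+t^p$ covers all $p\ge 1$ at once with $M=L=1$, and you are right that $p\le 2$ plays no role in this lemma.
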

\begin{proof}
    The situation under $p = 1$ is trivial. Assume $p \in (1, 2]$ with Hölder conjugate number $q := \tfrac{p}{p-1}$. Using the Cauchy-Schwarz inequality and Young's inequality, respectively, we obtain
    \begin{equation*}
        |\y^{\top} \Qb \y| = |\operatorname{svec}(\y \y^{\top})^{\top} \operatorname{svec}(\Qb) | \leq \|\operatorname{svec}(\y \y^{\top})||_{2} \| \operatorname{svec}(\Qb) \|_{2} \leq \frac{\|\operatorname{svec}(\y \y^{\top})||_{2}^{q}}{q} + \frac{\| \operatorname{svec}(\Qb) \|^{p}_{2}}{p} 
    \end{equation*}
for all $(\y,\Qb)\in \mathbb S^{n-1}\times {\mathcal S}^n$. Hence the result.\end{proof}

\begin{theorem}\label{out-of-sample performance guarantee under p in [1, 2]}
    Let $p \in [1, 2]$ and suppose that $\mathbb P_{\rm true}$ 
    satisfies Assumption \ref{ass:transportation-information-inequality}. 
For $0<\beta < 1$, put
$\theta_{c,N}(\beta) = \sqrt{\frac{2c\log (1/\beta)}{N}}$, then for any $\y \in \mathbb{S}^{n-1}$, 
    \begin{equation*}
        \mathbb P_{\rm true}^N\left[ \y^{\top} \left( \mathbb{E}_{\widehat{\mathbb{P}}_{N}}[\widetilde\Qb] - \mathbb{E}_{\mathbb P_{\rm true}}[\widetilde\Qb] \right) \y \leq \theta_{c,N}(\beta)\right] \geq 1 - \beta\,.
    \end{equation*}
\end{theorem}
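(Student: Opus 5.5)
Fix $\y \in \mathbb S^{n-1}$ and reduce the claim to a concentration inequality for the scalar empirical mean of $g(\widetilde\Qb) := \y^{\top}\widetilde\Qb\y$. We have
\[
\y^{\top}(\mathbb E_{\widehat{\mathbb P}_N}[\widetilde\Qb] - \mathbb E_{\mathbb P_{\rm true}}[\widetilde\Qb])\y = \frac1N\sum_{i=1}^N \bigl(g(\widehat\Qb_i) - \mathbb E_{\mathbb P_{\rm true}}[g]\bigr).
\]
The map $\boldsymbol\xi = \operatorname{svec}(\Qb)\mapsto g$ is linear, $g = \operatorname{svec}(\y\y^{\top})^{\top}\boldsymbol\xi$. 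Its Lipschitz modulus with respect to $\|\cdot\|_2$ is $\|\operatorname{svec}(\y\y^{\top})\|_2 = \y^{\top}\y = 1$. Lemma~\ref{growth condition for quadratic form} guarantees that $g$ is integrable under any $\mathbb P\in\mathcal P_p$, so all expectations below are finite.

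\textbf{Step 1 (transportation inequality for $W_1$).} Since $p\ge1$, $W_1\le W_p$ by the same $L^p$-monotonicity argument used in the proof of Proposition~\ref{lem:GOE-satisfies-transportation-information inequality}. Hence Assumption~\ref{ass:transportation-information-inequality} yields $W_1(\mathbb P,\mathbb P_{\rm true})\le\sqrt{2cD(\mathbb P\|\mathbb P_{\rm true})}$ for all $\mathbb P$.

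\textbf{Step 2 (sub-Gaussian bound for 1-Lipschitz functions).} By Bobkov--G\"otze \cite[Theorem 3.1]{bobkov1999exponential}, or directly through Kantorovich--Rubinstein duality combined with the Donsker--Varadhan variational formula, every 1-Lipschitz $g$ satisfies
\[
\mathbb E_{\mathbb P_{\rm true}}\exp\bigl(t(g-\mathbb E g)\bigr)\le \exp(ct^2/2)\quad\text{for all } t\in\mathbb R.
\]
For the direct route: for any $\mathbb P\ll\mathbb P_{\rm true}$, $t(\mathbb E_{\mathbb P}g-\mathbb E_{\mathbb P_{\rm true}}g)\le |t|\,W_1\le |t|\sqrt{2cD}\le D + ct^2/2$. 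Taking the supremum over $\mathbb P$ and applying Donsker--Varadhan gives exactly the log-MGF bound.

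\textbf{Step 3 (independence and Chernoff).} The samples are i.i.d., so the sum of the $N$ centered terms has MGF at most $\exp(Nct^2/2)$. Markov's inequality then gives
\[
\mathbb P_{\rm true}^N\Bigl[\tfrac1N\textstyle\sum_i (g(\widehat\Qb_i)-\mathbb E g)>s\Bigr]\le\exp(-tNs+Nct^2/2).
\]
Optimizing at $t=s/c$ yields the bound $\exp(-Ns^2/(2c))$. Setting this equal to $\beta$ gives $s=\sqrt{2c\log(1/\beta)/N}=\theta_{c,N}(\beta)$, and taking complements proves the claim.

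Alternatively, one could apply the transportation inequality to the product measure $\mathbb P_{\rm true}^{\otimes N}$ with tensorized $W_1$ constant and treat the empirical mean as a $1/\sqrt N$-Lipschitz function. The one-sample route via MGF tensorization above is simpler.

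The main obstacle is Step 2: one has to handle integrability and the restriction of Assumption~\ref{ass:transportation-information-inequality} to $\mathcal P_p$. The key point is that measures with $D<\infty$ that lie outside $\mathcal P_p$ do not affect the supremum, because $g$ has linear growth and can be truncated. I would justify this by approximating with bounded Lipschitz truncations $g\wedge k\vee(-k)$ and passing to the limit by monotone convergence. The rest is routine.
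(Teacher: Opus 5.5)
Your proof is correct, but it takes a different route from the paper. The paper's proof contains no concentration argument of its own. It first uses Theorem~\ref{thm:DRStQP} (really Theorem~\ref{thm:linear-loss} with reference measure $\mathbb P_{\rm true}$) to show that the worst-case increment $\sup_{\mathbb P\in\mathbb B_{\theta,p}(\mathbb P_{\rm true})}\mathbb E_{\mathbb P}[\y^{\top}\widetilde\Qb\y]-\y^{\top}\mathbb E_{\mathbb P_{\rm true}}[\widetilde\Qb]\y$ equals $\theta$. It then checks the growth condition via Lemma~\ref{growth condition for quadratic form} and invokes \cite[Theorem 1]{gao2023finite} as a black box. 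That theorem works on the product space by tensorizing $T_p(c)$ to $\mathbb P_{\rm true}^N$, which is where the restriction $p\in[1,2]$ comes from.

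You instead exploit that $g$ is linear with Lipschitz modulus exactly $1$, so only the $T_1(c)$ consequence of the assumption is needed. Bobkov--G\"otze turns this into a sub-Gaussian bound with variance proxy $c$. The MGF of an i.i.d.\ sum tensorizes trivially, and Chernoff returns precisely $\theta_{c,N}(\beta)$. Your route buys three things:
\begin{itemize}
\item a self-contained argument with a transparent constant;
\item no use of $p\le 2$, so the tensorization issue discussed after the theorem never arises;
\item by applying the same bound to $-g$, the reverse inequality $\y^{\top}(\mathbb E_{\mathbb P_{\rm true}}[\widetilde\Qb]-\mathbb E_{\widehat{\mathbb P}_N}[\widetilde\Qb])\y\le\theta_{c,N}(\beta)$, which is the direction an out-of-sample upper bound actually needs.
\end{itemize}
The paper's route buys generality. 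Gao's framework covers nonlinear, non-Lipschitz losses, where the Wasserstein ``variation'' plays the role of your Lipschitz constant. For this linear loss, Theorem~\ref{thm:linear-loss} shows the variation is exactly $\theta\,\|g\|_{\rm Lip}=\theta$, so both proofs use the same information.

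Two small corrections. First, in the truncation step $e^{tg_k}$ is not monotone in $k$ on the whole space. Pass to the limit with Fatou's lemma, or with monotone convergence on $\{tg\ge 0\}$ plus dominated convergence on $\{tg<0\}$. Second, your closing aside mixes two tensorizations. Tensorizing $T_1$ gives constant $Nc$ for the $\ell_1$-sum metric, under which the empirical mean is $1/N$-Lipschitz, not $1/\sqrt N$-Lipschitz. The $1/\sqrt N$ version refers to the $\ell_2$ product metric and needs $T_2$-type tensorization, which is exactly the $p\le 2$ issue your main argument avoids.
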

\begin{proof}
    Fix $\theta > 0$. Using Theorem \ref{thm:DRStQP} we know that
    \begin{equation*}
        \underset{\mathbb P \in \mathbb B_{\theta,p} (\mathbb P_{\rm true})}{\sup} \,\mathbb E_{\mathbb P}[\x^{\top}\widetilde\Qb\x] - \x^{\top}\mathbb{E}_{\mathbb P_{\rm true}}[\widetilde\Qb]\x = \theta \|\x\|^{2}_2\,,\quad \text{for all } \x\in \mathbb{R}^{n}\,,
    \end{equation*}
    which implies that
    \begin{equation*}
        \underset{\mathbb P \in \mathbb B_{\theta,p} (\mathbb P_{\rm true})}{\sup} \,\mathbb E_{\mathbb P}[\y^{\top}\widetilde\Qb\y] - \y^{\top}\mathbb{E}_{\mathbb P_{\rm true}}[\widetilde\Qb]\y = \theta\,, \quad \text{for all } \y\in \mathbb{S}^{n-1}\, .
    \end{equation*}
    Using Lemma \ref{growth condition for quadratic form}, we can see that there exist $M, L > 0$ such that
    \begin{equation*}
        |\y^{\top} \Qb \y| = |\operatorname{svec}(\y \y^{\top})^{\top} \operatorname{svec}(\Qb)| \leq M + L \| \operatorname{svec}(\Qb)\|^{p}_2\,, \quad \text{for all } (\y,\Qb) \in \mathbb{S}^{n-1}\times \mathcal S^{n}\, .
    \end{equation*}
    Thus, the claim follows immediately from \cite[Theorem 1]{gao2023finite}.
\end{proof}

\noindent 
In Theorem \ref{out-of-sample performance guarantee under p in [1, 2]}, we focus on $p \in [1, 2]$ due to the limitation of applying the tensorization proposition (see \cite[Proposition 22.5]{villani2008optimal} and \cite[Lemma 4]{gao2023finite}), which provides the Talagrand’s transportation-cost inequalities for product distributions. In Appendix~\ref{sec:AppendixB about proof of section out-of-sample performance guarantees} we show that the range of $p$ can be extended to any $p \geq 1$ for the StQP under sub-Gaussian and sub-exponential assumptions. For ease of reading, we will specify an implication already at the end of this section.

\begin{theorem}[Sub-exponential random instances]\label{thm:sub-exponential-random-vectors-pointwise}
Let $ \widehat{\boldsymbol{\xi}}_i = \operatorname{svec}(\widehat \Qb_i)$ be i.i.d. sub-exponential random vectors
generated by observations of random $\widehat \Qb_i\in {\mathcal S}^n$, $i\in [N]$. Then, for any $\beta \in (0, 1)$ and any ${\y \in \mathbb{S}^{n-1}}$,
    \begin{equation*}
        \mathbb P_{\rm true}^N\left[\y^{\top}\left( \mathbb{E}_{\widehat{\mathbb{P}}_{N}}[\widetilde\Qb] - \mathbb{E}_{\mathbb P_{\rm true}}[\widetilde\Qb] \right)\y  \leq C K \left( \sqrt{\frac{m + \log(2/\beta)}{N}} + \frac{m + \log(2/\beta)}{N}  \right) \right] \geq 1 - \beta\, ,
    \end{equation*}
    where $C > 0$ is an absolute constant, $K := \max_{i} \| \widehat{\boldsymbol{\xi}}_i \|_{\psi_{1}}$, and $\| \cdot \|_{\psi_{1}}$ represents the sub-exponential norm defined in Definition~\ref{definition of sub-exponential}. Furthermore, assume that there exists an $R > 0$ such that
    \begin{equation*}
        \| \|\widehat{\boldsymbol{\xi}}_i\|_{2} \|_{\psi_{1}} \leq R\, \quad \text{for all } i \in [N]\,.
    \end{equation*}
    Then,
    \begin{equation*}
        \mathbb P_{\rm true}^N\left[ \y^{\top}\left( \mathbb{E}_{\widehat{\mathbb{P}}_{N}}[\widetilde\Qb] - \mathbb{E}_{\mathbb P_{\rm true}}[\widetilde\Qb] \right)\y \leq 2\sqrt{2}R \sqrt{\frac{2 \log(2/\beta)}{N}} + \frac{R\log(2/\beta)}{N} \right] \geq 1 - \beta\, .
    \end{equation*}
    Moreover, if $\{\widehat{\boldsymbol{\xi}}_i\}_{i=1}^N\subset \mathbb R^m$ are i.i.d. sub-Gaussian random vectors, 
    we have for all $\y \in \mathbb{S}^{n-1}$  
    \begin{equation*}
        \mathbb P_{\rm true}^N\left[  \y^{\top}\left( \mathbb{E}_{\widehat{\mathbb{P}}_{N}}[\widetilde\Qb] - \mathbb{E}_{\mathbb P_{\rm true}}[\widetilde\Qb] \right)\y  \leq C K \left( \sqrt{\frac{m}{N}} + \sqrt{\frac{\log(2 / \beta)}{N}} \right) \right] \geq 1 - \beta\, ,
    \end{equation*}
    where $C > 0$ is an absolute constant, $K := \max_{i} \| \widehat{\boldsymbol{\xi}}_i \|_{\psi_{2}}$, and $\| \cdot \|_{\psi_{2}}$ represents the sub-Gaussian norm defined in Definition~\ref{definition of sub-gaussian}.
\end{theorem}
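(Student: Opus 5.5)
The plan is to reduce all three claims to concentration of a single scalar average. Fix $\y\in\mathbb S^{n-1}$ and put $\u:=\svec(\y\y\T)$. Then $\u\T\svec(\Qb)=\y\T\Qb\y$ and $\|\u\|_2=\y\T\y=1$, so $\u\in\mathbb S^{m-1}$. The quantity in the statement becomes
\[
\y\T\bigl(\EE_{\widehat\PP_N}[\widetilde\Qb]-\EE_{\PP_{\rm true}}[\widetilde\Qb]\bigr)\y=\frac1N\sum_{i=1}^N \zeta_i,\qquad \zeta_i:=\u\T\widehat{\boldsymbol\xi}_i-\EE[\u\T\widehat{\boldsymbol\xi}_i].
\]
The $\zeta_i$ are i.i.d. and centered. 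By Definition~\ref{definition of sub-exponential}, $\|\u\T\widehat{\boldsymbol\xi}_i\|_{\psi_1}\le K$, and centering costs only an absolute constant (Vershynin, Exercise~2.7.10). So $\|\zeta_i\|_{\psi_1}\le CK$.

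For the first bound I would apply Bernstein's inequality for sums of independent centered sub-exponential variables (Vershynin, Cor.~2.8.3):
\[
\PP\Bigl[\tfrac1N\textstyle\sum_i\zeta_i\ge t\Bigr]\le \exp\bigl(-cN\min(t^2/K^2,t/K)\bigr).
\]
Choosing $t=CK\bigl(\sqrt{s/N}+s/N\bigr)$ with $s=m+\log(2/\beta)$ makes the right-hand side at most $\beta$. The statement adds $m$ to $\log(2/\beta)$, which is harmless here because it only enlarges $t$. Alternatively, one could bound $\sup_{\u\in\mathbb S^{m-1}}$ by a $1/2$-net of cardinality $5^m$ and a union bound, which explains the $m$ and gives a uniform version. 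I would present the net argument, since it matches the stated form.

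For the second claim I would use $|\zeta_i|\le\|\widehat{\boldsymbol\xi}_i\|_2+\EE\|\widehat{\boldsymbol\xi}_i\|_2$. This gives $\|\zeta_i\|_{\psi_1}\lesssim R$ and moment bounds $\EE|\zeta_i|^k\le \tfrac{k!}{2}(2R)^2R^{k-2}$ from the $\psi_1$ control. The moment form of Bernstein's inequality (the Bernstein condition with variance proxy $\sigma^2=8R^2$ and scale $b=R$) then yields deviations $\sqrt{2\sigma^2\log(2/\beta)/N}+b\log(2/\beta)/N$, which is exactly $2\sqrt2R\sqrt{2\log(2/\beta)/N}+R\log(2/\beta)/N$. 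The main obstacle is matching these explicit constants, namely verifying the moment condition from $\EE\exp(|\zeta|/R)\le 2$ up to the factors $2\sqrt2$. I would use $\EE|\zeta|^k\le 2\,k!\,R^k$ and track the factor of $2$, possibly after recentering.

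For the sub-Gaussian case, the $\zeta_i$ are centered sub-Gaussian with $\|\zeta_i\|_{\psi_2}\le CK$. Hoeffding's inequality (Vershynin, Thm.~2.6.2) gives $\frac1N\sum\zeta_i\le CK\sqrt{\log(1/\beta)/N}$ with probability at least $1-\beta$. The $\sqrt{m/N}$ term again comes from taking the supremum over a net of $\mathbb S^{m-1}$, or can simply be added since it only weakens the bound. In each case the conclusion holds in particular for $\u=\svec(\y\y\T)$, as claimed.
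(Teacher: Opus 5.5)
\textbf{Different route from the paper.} The paper never fixes $\y$. It reads the statement off the uniform Theorem~\ref{thm:sub-exponential-random-vectors-uniform}. It first bounds $\y^{\top}\Ab\y\le\|\Ab\|_F=\|\overline{\boldsymbol{\xi}}-\boldsymbol{\mu}_{\rm true}\|_2$ simultaneously for all $\y\in\mathbb S^{n-1}$ (Lemma~\ref{transition for high probability bound}). It then concentrates the whole vector $\frac1N\sum_i\widehat{\boldsymbol\zeta}_i$ in $\ell_2$:
\begin{itemize}
\item for the first and third claims, a $\tfrac12$-net of $\mathbb S^{m-1}$ combined with scalar Bernstein/Hoeffding;
\item for the second claim, Pinelis' Hilbert-space martingale inequality.
\end{itemize}

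Your identity $\y^{\top}\Ab\y=\u^{\top}\svec(\Ab)$, with $\u=\svec(\y\y^{\top})\in\mathbb S^{m-1}$, uses only one direction. So plain scalar inequalities suffice, and for fixed $\y$ the $m$-terms are superfluous. You get $CK(\sqrt{s/N}+s/N)$ with $s=\log(2/\beta)$, and $CK\sqrt{\log(1/\beta)/N}$ in the sub-Gaussian case.

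You also handle centering explicitly. The paper glosses over this: it feeds the centered $\widehat{\boldsymbol\zeta}_i$ into bounds whose constants $K$ and $R$ are defined through $\widehat{\boldsymbol\xi}_i$.

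What the paper's route buys is uniformity in $\y$. That is what one actually needs to evaluate the bound at the sample-dependent direction $\widehat\x/\|\widehat\x\|_2$ of the data-driven solution. Your optional net argument recovers this and coincides with the paper's Theorems~\ref{Hoeffding inequality for sub-gaussian vectors} and~\ref{bernstein inequality under sub-exponential based on covering number method}.

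\textbf{A step in your second claim does not close as written.} From $|\zeta_i|\le\|\widehat{\boldsymbol\xi}_i\|_2+\EE\|\widehat{\boldsymbol\xi}_i\|_2$ you only get
\[
\EE|\zeta_i|^k\le 2^k\,\EE\|\widehat{\boldsymbol\xi}_i\|_2^k\le 2^k k!\,R^k,
\]
that is, scale $b=2R$. Three points follow:
\begin{itemize}
\item The bound $\EE|\zeta_i|^k\le 2\,k!\,R^k$ that you invoke is not justified.
\item $\EE\exp(|\zeta_i|/R)\le 2$ is not a hypothesis.
\item Your numbers are inconsistent: $\tfrac{k!}{2}(2R)^2R^{k-2}$ corresponds to $\sigma^2=4R^2$, not $8R^2$.
\end{itemize}
With $b=2R$ the linear term doubles to $2R\log(2/\beta)/N$, which overshoots the stated $R\log(2/\beta)/N$.

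\textbf{How to repair it.} Avoid recentering altogether. Use Bernstein's inequality in the Boucheron--Lugosi--Massart form (Theorem~2.10 of their book), whose hypotheses concern the uncentered summands $X_i=\u^{\top}\widehat{\boldsymbol\xi}_i$. If $\sum_i\EE X_i^2\le v$ and $\sum_i\EE (X_i)_+^q\le\tfrac{q!}{2}v\,c^{q-2}$ for all $q\ge 3$, then
\[
\PP\bigl[\textstyle\sum_i(X_i-\EE X_i)\ge\sqrt{2vt}+ct\bigr]\le e^{-t}.
\]
To verify the hypotheses:
\begin{itemize}
\item We have $\EE\exp(\|\widehat{\boldsymbol\xi}_i\|_2/R)\le 2$, and the zeroth term of the exponential series equals $1$. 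Hence $\EE\|\widehat{\boldsymbol\xi}_i\|_2^k\le k!\,R^k$ for all $k\ge 1$.
\item Since $|X_i|\le\|\widehat{\boldsymbol\xi}_i\|_2$, the choice $v=2NR^2$ and $c=R$ works.
\end{itemize}
Taking $t=\log(1/\beta)$ gives, with probability at least $1-\beta$,
\[
\tfrac1N\textstyle\sum_i\zeta_i\le 2R\sqrt{\log(1/\beta)/N}+R\log(1/\beta)/N.
\]
This lies below the stated threshold $2\sqrt2R\sqrt{2\log(2/\beta)/N}+R\log(2/\beta)/N=4R\sqrt{\log(2/\beta)/N}+R\log(2/\beta)/N$, so your approach then proves the claim with room to spare.
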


\begin{proof} Follows in straightforward manner from~Theorem~\ref{thm:sub-exponential-random-vectors-uniform} where the same probability bounds are established for the suprema of the random quadratic forms across all $\y\in \mathbb S^{n-1}$.\end{proof}

\noindent The connection between the assumptions can be explained as follows. Assumption \ref{ass:transportation-information-inequality} is commonly referred to in the literature as a transportation-information inequality $T_p(c)$. It is known that $T_2(c)$ implies $T_1(c)$. Moreover, $T_1(c)$ is equivalent to the exponential decay Assumption \ref{assumption of exponential decay tails}, see \cite[Theorem 22.10]{villani2008optimal}, and $T_1(c)$ implies sub-Gaussianity. In turn, sub-Gaussianity implies sub-exponentiality. The GOE model satisfies $T_2(c)$, thus satisfies them all. The Wishart model only satisfies sub-exponentiality, the weakest of them all.

\section{Numerical {experiments} 
}\label{sec:5 numerical simulations}
In this section, we will apply the above theory to the maximum weighted clique problem with uncertain weights. Let $G=(V,E)$ be a simple undirected graph with vertex set $V$ and edge set $E \subseteq V \times V$, and denote the number of vertices by $n = |V| <  \infty$. Recall that a graph is called complete if $E = V \times V$, i.e. for every pair of distinct vertices there is a unique edge connecting them. A subset $S\subseteq V$ of the set of vertices is called a clique if it induces a complete subgraph of $G$. A clique $S$ is maximal if there is no superset of $S$ that is also a clique. A maximum clique is a maximal clique with largest cardinality over all cliques. The maximum clique problem consists of finding the maximum clique in a graph.

\noindent We will consider the following variation of the maximum clique problem. Let $\w = (w_i)_{i \in V} \in \mathbb R_+^n$ be a vector of vertex weights. The maximum weighted clique problem consists of finding the clique $S\subseteq V$ with largest total weight sum $W(S) = \sum_{i\in S} w_i$. Following the method given in~\cite[Section 4.2]{bomze2021trust}, we derive a weighted adjacency matrix $\Ab = (a_{ij})$ as follows:
\begin{equation}\label{weighted adjacency matrix}
a_{ij} := \left\{
        \begin{array}{lll}
        &1 - \frac{1}{2 w_{i}}, &\text{ if } i = j \in V,\\
        &1, \ \ \ \ \ \ \ \  &\text{ if }\lbrace i, j \rbrace \in E, \text{ and }\\
        &1 - \frac{1}{2 w_{i}} - \frac{1}{2 w_{j}}, &\text{ otherwise }.
        \end{array}
\right.
\end{equation}

\noindent The maximum weighted clique problem is equivalent to the following continuous optimization problem
\begin{equation}\label{prob:MWCP}
\underset{\x \in \Delta}{\max}\,\x^{\top}\Ab\x\,.
\end{equation}
By~\cite[Theorem 7]{bomze1998standard} a global solution $\x^*$ of the StQP \eqref{prob:MWCP} yields a maximum weight clique $S^*=\{i \in V: x_i^* > 0\}$ and its total weight can be also computed as $W(S^*) = [2(1-\x^{*\top}\Ab\x^*)]^{-1}$.

\noindent Let $\Eb := \e\e^{\top}$ denote the matrix of all ones and put $\Qb_{\rm nom} := \Eb - \Ab$. Since $\x^{\top}\Eb\x = \x^{\top}\e\e^{\top}\x = 1$ for $\x \in \Delta$, we can rewrite  \eqref{prob:MWCP} as an StQP as follows:
\begin{equation}\label{prob:MWCP-re}
\underset{\x \in \Delta}{\min}\,\x^{\top}\Qb_{\rm nom}\x\,.
\end{equation}

\subsection{Wasserstein balls with decision-independent radius}\label{section numerical simulation with decision-independent radius}
In this subsection, we solve the decision-independent DRStQP given in \eqref{equivalent formulation for decision independent DRO}. First, we randomly generate an undirected graph $G=(V,E)$ and a weights vector $\w$. To avoid negative entries of the weighted adjacency matrix $\Ab$, the weights are generated according to the exponential distribution as 
\begin{equation}\label{weight setting}
    \widetilde{w}_i = 1 + \widetilde{z}_i, \text{ with } \widetilde{z}_i \sim \text{Exp}(1.5).
\end{equation}
Then, we compute the weighted adjacency matrix $\Ab$ and form the matrix $\Qb_{\rm nom} := \Eb - \Ab$. After that, we sample $\{\widehat \Gb_i\}_{i=1}^N \sim \operatorname{GOE}(n)$, recall Definition~\ref{def:GOE}, and put 
\begin{equation*}
\widehat\Qb_{\beta,i}:= \Qb_{\rm nom} + \beta \,\widehat \Gb_i\,,
\end{equation*}
for a $\beta > 0$. The sample mean of $\{\widehat\Qb_{\beta,i}\}_{i=1}^N$ is denoted by
\begin{equation*}
\overline\Qb_{\beta}:=\frac{1}{N}\sum_{i=1}^N\widehat\Qb_{\beta,i}\, \,.
\end{equation*}

\begin{example}\label{solutions to decision-independent case under different beta and theta}
Figure \ref{graph of solutions to decision-independent case under different beta and beta} illustrates the impact of the ambiguity set radius $\theta$ and the noise level $\beta$ on the solution topology of the maximum clique problem \eqref{equivalent formulation for decision independent DRO}. The red edges represent the connectivity of the optimal solution clique. A fundamental structural transition is observed as the radius $\theta$ increases, highlighting the interplay between the structural objective and the regularization penalty. When the radius is small ($\theta = 0.01$, first column), the optimization is dominated by the underlying graph structure term $\x^{\top} \overline{\Qb}_{\beta} \x$. In low-noise regimes ($\beta \le 0.1$), the solver strictly adheres to the topological constraints, yielding solutions that are fully connected subgraphs (cliques). However, this strict adherence comes at the cost of robustness. The solutions are sensitive to the noise level $\beta$: as $\beta$ increases to $0.8$, the solution topology deteriorates from a dense clique into a sparser subgraph. Correspondingly, the weight of the identified subgraph drops significantly ($W(S)$ decreases from $9.48$ to $6.68$), indicating that without sufficient regularization, the solver fits to the noise in $\overline{\Qb}_{\beta} $. As $\theta$ increases to $1.5$ (third column), the regularization term $\theta \|\x\|^2_{2}$ exerts a strong smoothing effect, leading the solver to spread the probability mass over a larger support set. Consequently, the solution topology shifts away from a strict clique structure into a denser, more distributed subgraph. Despite this change in topology, the solutions exhibit remarkable stability. The visualized solution structure remains visually dense and highly connected across all noise levels $\beta \in \{0.01, 0.1, 0.8\}$ (third column). Notably, the objective weight does not degrade under high noise in this regime; instead, it exhibits an increasing trend, rising from $W(S)=18.87$ to $W(S)=20.37$. This demonstrates that a large $\theta$ effectively immunizes the solution against sample noise, allowing the solver to maintain, and even enhance, the solution quality even under significant uncertainty. The case of $\theta = 0.6$ (middle column) represents a transition phase, where the solution maintains clique-like properties under low noise but experiences a moderate decline in weight ($W(S)$ drops to $6.57$) as the noise $\beta$ becomes dominant.

\end{example}

\begin{figure}[htbp]
\centering
\includegraphics[scale = 0.23]{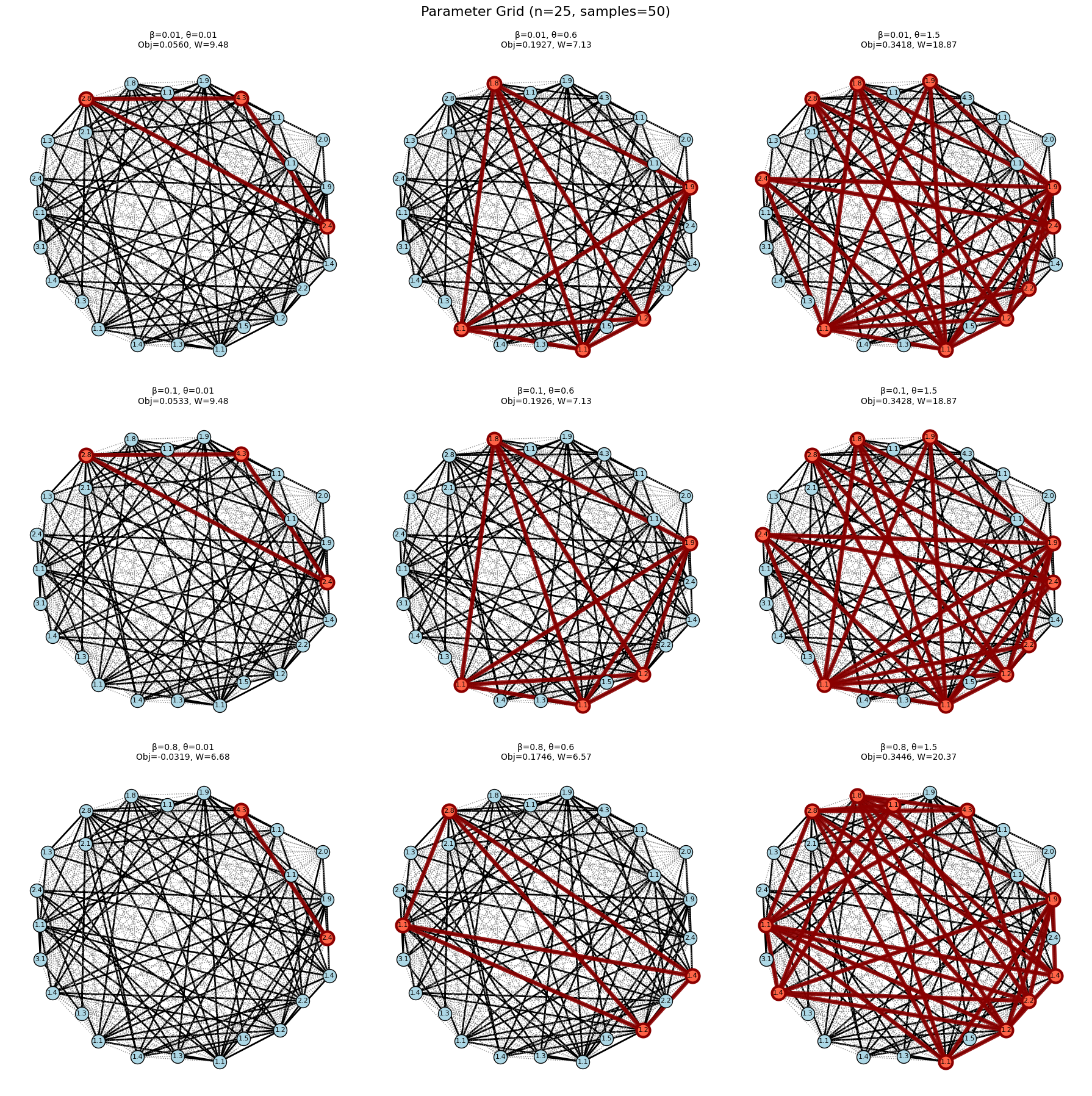}
\caption{Maximum clique solutions under varying noise levels $\beta$ and robustness radii $\theta$. The columns correspond to $\theta \in \{0.01, 0.6, 1.5\}$ and the rows correspond to $\beta \in \{0.01, 0.1, 0.8\}$. $W$ represents the weighted size of the solution clique. (Example \ref{solutions to decision-independent case under different beta and theta}).}
\label{graph of solutions to decision-independent case under different beta and beta}
\end{figure}

\begin{example}\label{Out-of-sample performance to decision-independent case under differential gamma}
To further investigate the properties of the decision-independent case, Figure \ref{graph of solutions to decision-independent case for different node scales and sample sizes-obj and max weights} presents four key performance metrics as a function of $\theta$ over the range $[10^{-3}, 10]$. As expected, the optimal objective value, shown in Figure \ref{graph of solutions for different node scales and sample sizes-Objective function values}, increases monotonically with $\theta$. The curve exhibits a smooth ascent, dominated by the quadratic penalty term $\theta \|\mathbf{x}\|_2^2$ as $\theta$ becomes large. The narrow shaded region (Min-Max range) indicates that the objective value is relatively stable across different random trials. Figure \ref{graph of solutions for different node scales and sample sizes-Total maximum clique weight} illustrates that the total support weight remains constant for small $\theta$ ($< 0.1$) but undergoes a sharp logistic-like increase in the interval $\theta \in [1, 4]$ before plateauing. This transition implies that as the regularization penalty increases, the optimal solution vector $\mathbf{x}^*$ spreads its mass over a larger set of nodes or adjusts the weighting scheme to minimize the penalized objective, leading to a higher aggregate weight in the solution support. Figure \ref{graph of solutions for different node scales and sample sizes-Graph density of final solution graphs} shows the graph density of the final solution, which is defined as follows:
\begin{equation*}
    \rho(G^{*}) := \frac{|E^{*}|}{
        \begin{pmatrix}
        |V^{*}| \\
        2
        \end{pmatrix}
    } 
\end{equation*}
This metric corroborates the visual findings from Figure \ref{graph of solutions to decision-independent case under different beta and beta}. The graph density remains at $\rho(G) \approx 1.0$ (indicating a clique) for $\theta < 0.2$. A phase transition occurs between $\theta \approx 0.2$ and $\theta \approx 2.0$, where the density drops drastically, settling at a lower steady state ($\approx 0.3$). This confirms that the \textit{clique} property of the solution is only maintained when the radius $\theta$ is below a critical threshold. Beyond this threshold, the sparsity-inducing effects of the regularization (or the shift in eigen-structure) break the clique topology. The running time analysis, shown in Figure \ref{graph of solutions to decision-independent case under for different node scales and sample sizes-Running time of final solution graphs}, reveals a critical computational phenomenon. The solver is highly efficient for both small $\theta$ (stable clique regime) and very large $\theta$ (stable sparse regime). However, a significant spike in solving time is observed in the transition region $\theta \in [1, 3]$. This peak suggests that the optimization landscape becomes most complex near the phase transition boundary, where the solver struggles to distinguish between competing local optima or structural configurations.
\end{example}

\begin{figure}[htbp]
    \centering
    \begin{subfigure}[b]{0.40\textwidth}
        \centering        \includegraphics[width=\textwidth]{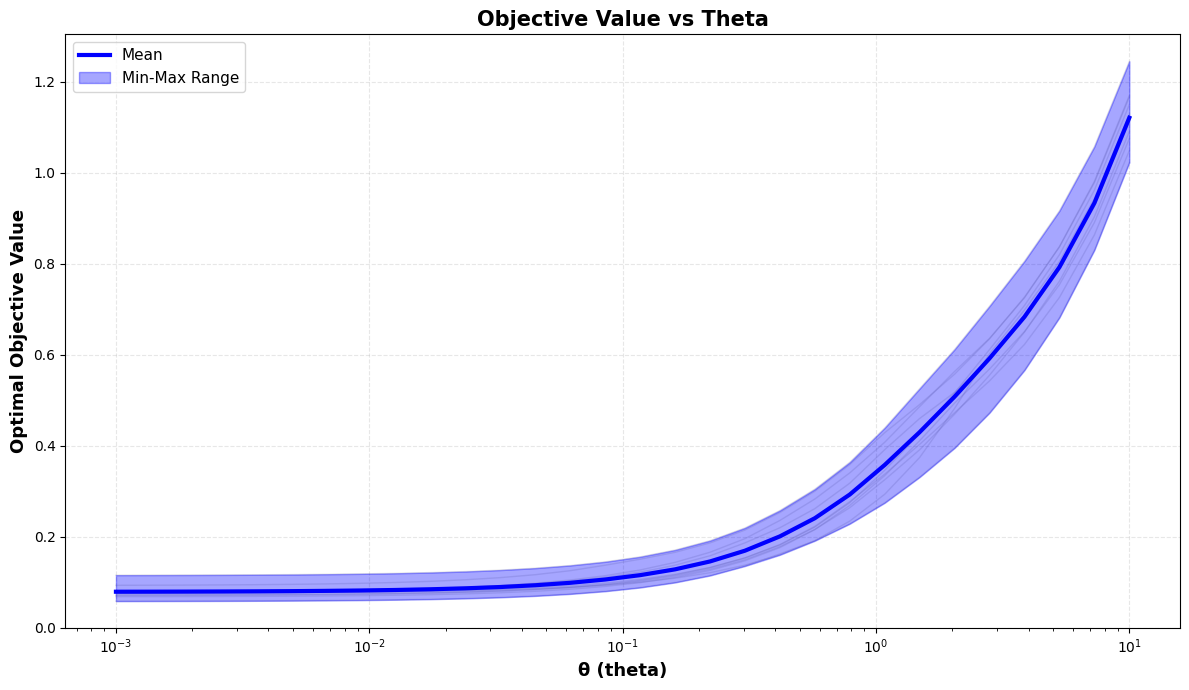}
        \caption{Objective function values.}
        \label{graph of solutions for different node scales and sample sizes-Objective function values}
    \end{subfigure}
    \begin{subfigure}[b]{0.40\textwidth}
        \centering        \includegraphics[width=\textwidth]{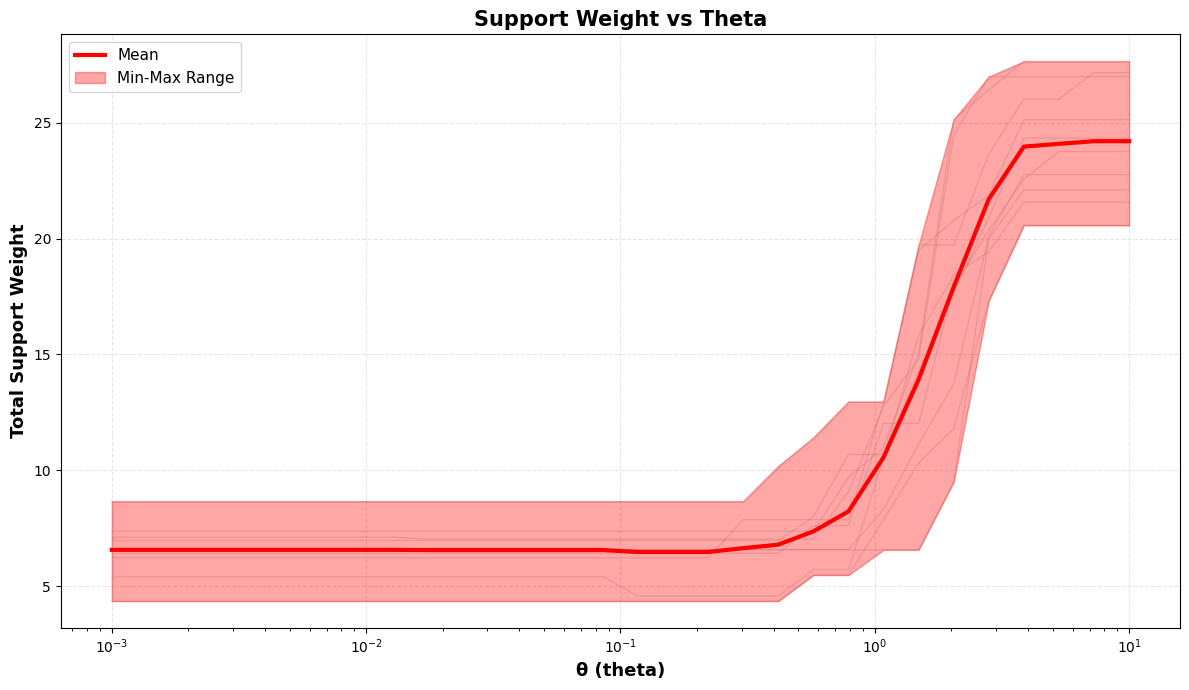}
        \caption{Total maximum clique weight.}
        \label{graph of solutions for different node scales and sample sizes-Total maximum clique weight}
    \end{subfigure}
    \begin{subfigure}[b]{0.40\textwidth}
        \centering        \includegraphics[width=\textwidth]{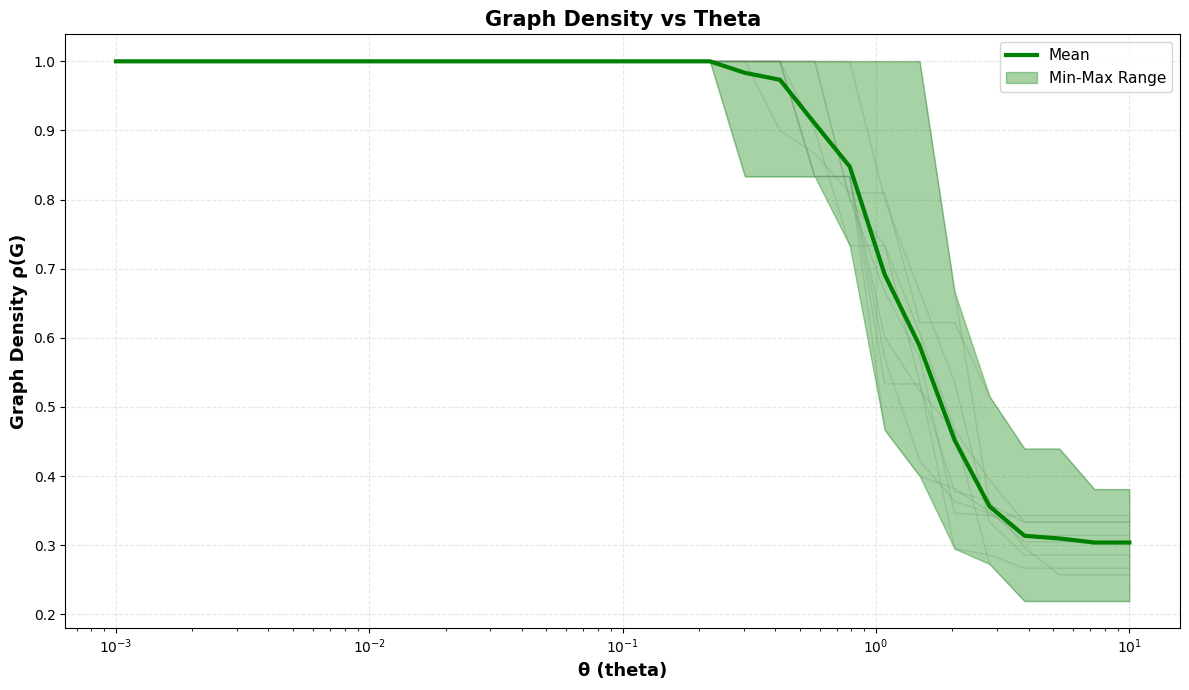}
        \caption{Graph density of final solution graphs.}
        \label{graph of solutions for different node scales and sample sizes-Graph density of final solution graphs}
    \end{subfigure}
    \begin{subfigure}[b]{0.40\textwidth}
        \centering        \includegraphics[width=\textwidth]{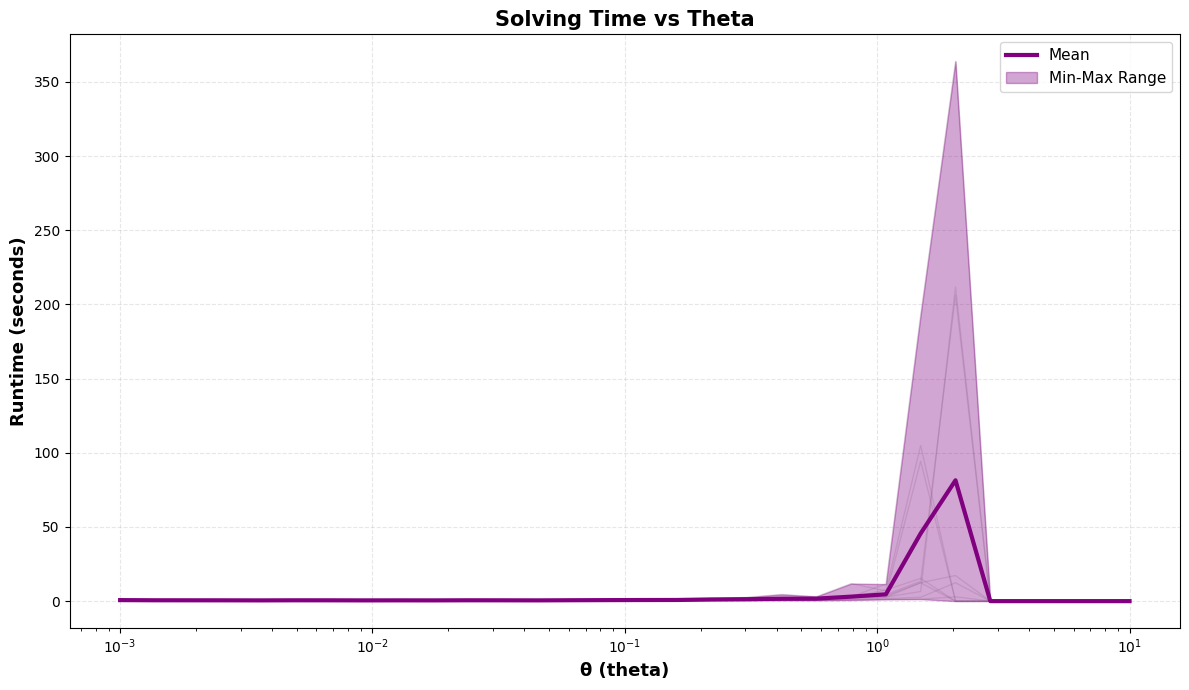}
        \caption{Running time.}
        \label{graph of solutions to decision-independent case under for different node scales and sample sizes-Running time of final solution graphs}
    \end{subfigure}
    \caption{Objective values, maximum clique weights, Graph density and running time for maximum clique solutions under different $\theta$. The trials size is $10$ including $20$ samples within each trial. The node size is $15$. $\beta = 0.001$. The range of $\theta$ is $[10^{-3}, 10]$. (a) Optimal objective value; (b) Maximum clique weight $W(S)$; (c) Graph density $\rho(G)$; (d) Solver runtime in seconds. (Example \ref{Out-of-sample performance to decision-independent case under differential gamma}).}
    \label{graph of solutions to decision-independent case for different node scales and sample sizes-obj and max weights}
\end{figure}

\subsection{
Wasserstein balls with decision-dependent radius}\label{exper}

Illustrating the model covered in Section~\ref{d3}, we will derive a \eqref{D^3RO} version of \eqref{prob:MWCP-re}.
Similar to Section \ref{section numerical simulation with decision-independent radius}, we randomly generate an undirected graph $G=(V,E)$, with $n= |V|$ nodes and $|E|$ edges and a weights vector $\w$ using  \eqref{weight setting}. Then, we compute the weighted adjacency matrix $\Ab$ by~\eqref{weighted adjacency matrix} and form the matrix $\Qb_{\rm nom} := \Eb - \Ab$. 
We sample $\{\widehat \Wb_i\}_{i=1}^N \sim \mathcal W_n(\Ib,n)$, recall Definition~\ref{def:Wishart}, and set 
\begin{equation}\label{sample matrix definition}
\widehat\Qb_{\beta,i}:= \Qb_{\rm nom} + \beta \,\widehat \Wb_i\,.
\end{equation}
for a $\beta > 0$. The sample mean of $\{\widehat\Qb_{\beta,i}\}_{i=1}^N$ is denoted by
\begin{equation*}
\overline\Qb_{\beta}:=\frac{1}{N}\sum_{i=1}^N\widehat\Qb_{\beta,i}\, = \Qb_{\rm nom} +  \frac{\beta}{N} \sum^{N}_{i = 1} \widehat\Wb_{i} =: \Qb_{\rm nom} + \beta \, \Rb.
\end{equation*}

\noindent By construction, 
\begin{equation*}
\Qb_{\rm nom} = \Eb - \Ab =
    \left\{
        \begin{array}{lll}
        &\frac{1}{2 w_{i}},  &\text{ if } i = j \in V,\\
        &0,  &\text{ if }\lbrace i, j \rbrace \in E, \text{ and }\\
        &\frac{1}{2 w_{i}} + \frac{1}{2 w_{j}}, &\text{ otherwise },
        \end{array}
    \right.
\end{equation*}
which implies that
\begin{equation}\label{strict copositive of true expectation}
    \x^{\top} \Qb_{\rm nom} \x = \sum_{i = 1}^n\sum_{j = 1}^n x_{i} x_{j} (\Qb_{\rm nom})_{ij} \geq \sum_{i=1}^n x_i^2 (\Qb_{\rm nom})_{ii} = \frac{1}{2} \sum_{i=1}^n \frac{x_i^2}{w_{i}} >0 \quad\mbox{for all }\x \in \Delta\,,
\end{equation}
and thus $\Qb_{\rm nom}$ is copositive. Since $\overline\Qb_{\beta}$ serves as the sample average approximation of the true expectation of $\widetilde{\Qb}$, it is desirable for $\overline\Qb_{\beta}$ to inherit the structural properties of the true expectation; namely, it should be indefinite and strictly copositive on the standard simplex. Strict copositivity is satisfied by construction: since $\Qb_{\rm nom}$ is strictly copositive (see \eqref{strict copositive of true expectation}) and $\Rb$ is positive semi-definite, their sum remains strictly copositive for any $\beta > 0$:
\begin{equation*}
\x^{\top}\overline\Qb_{\beta} \x = \x^{\top} \Qb_{\rm nom} \x + \beta \x^{\top} \Rb \x > 0, \quad \forall \x \in \Delta.
\end{equation*}
However, the indefiniteness of $\overline\Qb_{\beta}$ is not automatically guaranteed. Since $\Rb$ is positive semi-definite, a large $\beta$ could shift the eigenvalues sufficiently to render $\overline\Qb_{\beta}$ positive definite. To ensure that $\overline\Qb_{\beta}$ remains indefinite, we must limit the range of $\beta$. Using Weyl's inequality, we know that
\begin{equation}\label{weyl inequality 2}
    \lambda_{\min}(\overline\Qb_{\beta}) \leq \lambda_{\min}(\Qb_{\rm nom}) + \lambda_{\max}(\beta \Rb )\, .
\end{equation}
Using \eqref{weyl inequality 2}, we can see that it suffices to take $\beta$ satisfying
\begin{equation*}
    \beta < \frac{-\lambda_{\min}(\Qb_{\rm nom})}{\lambda_{\max}( \Rb )} =: \beta_{\max}\, .
\end{equation*}
Thus, to guarantee that the generated matrix $\overline\Qb_{\beta}$ is indefinite and still strictly copositive, we take $\beta \in (0, \beta_{\max})$ with $\beta_{\max}$ depending on $\Qb_{\rm nom}$ and $\Rb$. If $\beta>\beta_{\max}$, we would generate a possibly convex instance which represents a significantly large (random) deviation from the sample mean, i.e. a larger variability within the sample. It is easy to see that the quadratic form $\x\T\overline\Qb_{\beta}\x$ is convex in $\x$ if and only if
    $$
    \beta\ge \beta_{\rm conv} := -\lambda_{\rm min}(\Rb^{-1/2}\Qb_{\rm nom}\Rb^{-1/2})\, .
    $$

\noindent Given $\gamma \in \mathbb{R}$, we take the radius functions in the following form
$$
\theta_{\beta, \gamma}(\x):=\frac{\gamma}{\x^{\top}\overline\Qb_{\beta}\x}\,.
$$
Using Corollary \ref{Cor rewrite of D3RO under r(x)} again, we can derive the following data-driven problem: 
\begin{equation}\label{D3RO for numerical data-driven}
\underset{\x \in \Delta}{\vphantom{\sup}\inf} \, \underset{\PP \in \mathbb B_{\theta_{\beta, \gamma}(\x),p} (\widehat\PP_N)}{\sup} \,\mathbb E_{\PP}[\x^{\top}\widetilde\Qb\x] =  \inf_{\x \in \Delta} \left [\x^\top \overline\Qb_{\beta} \x + \gamma \, \frac{\x^\top \x}{\x^\top \overline\Qb_{\beta}\x}\right ] \,.
\end{equation}
We solve \eqref{D3RO for numerical data-driven} via the following more suitable equivalent reformulation
\begin{equation*}
    \begin{aligned}
        \underset{\x,y,t}{\min} &&& t + \gamma y\\
        \operatorname{s.t.} &&&  t = \x^{\top} \overline\Qb_{\beta}\x\\
        &&& y t \geq \x^\top\x\\
        &&&\x \in \Delta, \, y \in \mathbb R_+ , \, t \in \mathbb{R}
    \end{aligned}
\end{equation*}
with the non-convex solver {\tt Gurobi}.

\noindent Next, we discuss the influence of $\gamma$ on the convexity of problem \eqref{D3RO for numerical data-driven}. For brevity, we define $v(\x) := \|\x\|^{2}_{2}$ and $u(\x;\beta) := \x^\top \overline\Qb_{\beta}\x$. Then the objective function of \eqref{D3RO for numerical data-driven} can be written as:
\begin{equation}\label{temporary obj of D3RO}
    f(\x; \beta, \gamma) = u(\x; \beta) + \gamma \frac{v(\x)}{u(\x; \beta)}\, .
\end{equation}

\begin{proposition}\label{convexity of the final objective}Assume that $\overline\Qb_{\beta}\in {\mathcal S}^{n}$ is positive definite with eigenvalues $0 < \lambda_{\min}(\overline\Qb_{\beta}) \leq \lambda_{\max}(\overline\Qb_{\beta})$. Define the constant $C(\overline{\Qb}_{\beta}, n)$ as:
\begin{equation*}
    C(\overline\Qb_{\beta}, n) := n \left( \frac{2}{\lambda_{\min}(\overline\Qb_{\beta})} + \frac{10 \lambda_{\max}(\overline\Qb_{\beta})}{\lambda_{\min}(\overline\Qb_{\beta})^2} + \frac{8 \lambda_{\max}(\overline\Qb_{\beta})^2}{\lambda_{\min}(\overline\Qb_{\beta})^3} \right)\, .
\end{equation*}
Then the objective function $f(\x;\beta,\gamma)$ is convex in $\x$ across $\Delta$ if
\begin{equation*}
    \gamma \leq \gamma_{\rm conv} := \frac{2 \lambda_{\min}(\overline\Qb_{\beta})}{C(\overline\Qb_{\beta}, n)}\, .
\end{equation*}
\end{proposition}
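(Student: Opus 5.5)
The plan is a direct second-order argument. Write $f = u + \gamma g$ with $g(\x) := v(\x)/u(\x;\beta)$, and show that the Hessian $\nabla^2 f(\x)$ is positive semidefinite at every $\x\in\Delta$. This suffices even though $\Delta$ has empty interior. For any $\x,\z\in\Delta$ the function $\phi(t) = f(\x + t(\z-\x))$ on $[0,1]$ has $\phi''(t) = (\z-\x)^\top \nabla^2 f(\x+t(\z-\x))(\z-\x)$, and the points $\x+t(\z-\x)$ stay in $\Delta$. So nonnegativity of the Hessian quadratic form on $\Delta$ gives convexity along every segment. Since $\overline\Qb_\beta \succ 0$, $u>0$ on a neighbourhood of $\Delta$, so $f$ is smooth there. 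Since $\nabla^2 u = 2\overline\Qb_\beta \succeq 2\lambda_{\min}(\overline\Qb_\beta)\Ib$, it suffices to show $\|\nabla^2 g(\x)\|_2 \le C(\overline\Qb_\beta,n)$ on $\Delta$. Then $\nabla^2 f \succeq (2\lambda_{\min} - |\gamma| C)\Ib$, which is $\succeq 0$ when $|\gamma|\le\gamma_{\rm conv}$. This covers the intended range $0\le\gamma\le\gamma_{\rm conv}$. For negative $\gamma$ the same bound needs $|\gamma|\le \gamma_{\rm conv}$, so I would state that restriction explicitly.

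\textbf{Hessian of the quotient.} With $\nabla v = 2\x$, $\nabla^2 v = 2\Ib$, $\nabla u = 2\overline\Qb_\beta\x$ and $\nabla^2 u = 2\overline\Qb_\beta$, the quotient rule gives
\begin{equation*}
\nabla^2 g = \frac{\nabla^2 v}{u} - \frac{\nabla v\,\nabla u^\top + \nabla u\,\nabla v^\top}{u^2} - \frac{v\,\nabla^2 u}{u^2} + \frac{2v\,\nabla u\,\nabla u^\top}{u^3}.
\end{equation*}
I bound each term in spectral norm, writing $\lambda_{\min},\lambda_{\max}$ for the extreme eigenvalues of $\overline\Qb_\beta$. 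The inputs are $\|\overline\Qb_\beta\x\|_2\le\lambda_{\max}\|\x\|_2$ and $u \ge \lambda_{\min} v$. On $\Delta$ we also have $\tfrac1n \le v \le 1$, by Cauchy--Schwarz with $\e^\top\x=1$, and hence $u\ge \lambda_{\min}/n$.

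\textbf{Term-by-term bounds.}
\begin{itemize}
\item First term: $2/u \le 2n/\lambda_{\min}$.
\item Cross term: $2\cdot 2\|\x\|_2\cdot 2\lambda_{\max}\|\x\|_2/u^2 = 8\lambda_{\max}v/u^2 \le 8\lambda_{\max}/(\lambda_{\min}^2 v) \le 8n\lambda_{\max}/\lambda_{\min}^2$.
\item Third term: $2\lambda_{\max}v/u^2 \le 2n\lambda_{\max}/\lambda_{\min}^2$. Together with the cross term this gives the coefficient $10n\lambda_{\max}/\lambda_{\min}^2$.
\item Last term: $2v\cdot 4\lambda_{\max}^2 v/u^3 \le 8\lambda_{\max}^2/(\lambda_{\min}^3 v) \le 8n\lambda_{\max}^2/\lambda_{\min}^3$.
\end{itemize}
Summing reproduces exactly $C(\overline\Qb_\beta,n)$, which is presumably how the constant was obtained.

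\textbf{Main obstacle.} The delicate point is converting powers of $u$ in the denominators into powers of $v$ via $u\ge\lambda_{\min}v$, so that a single factor $1/v\le n$ remains in each term; this is where the factor $n$ and the $\lambda_{\min}$ powers arise. The remaining care is the sign of $\gamma$ noted above, together with the segment argument that legitimises a Hessian test on the non-open set $\Delta$.
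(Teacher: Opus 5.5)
Your proposal is correct and follows the same route as the paper. Both write $\nabla^2_{\x\x} f = 2\overline\Qb_{\beta} + \gamma\,\nabla^2(v/u)$ and bound the spectral norm of the quotient Hessian term by term via $u\ge\lambda_{\min}(\overline\Qb_\beta)\,v$, $\|\overline\Qb_\beta\x\|_2\le\lambda_{\max}(\overline\Qb_\beta)\|\x\|_2$ and $1/v\le n$, which yields exactly $C(\overline\Qb_\beta,n)$. Your explicit per-term estimates, the segment argument for a Hessian test on the non-open set $\Delta$, and the remark that the bound really needs $|\gamma|\le\gamma_{\rm conv}$ spell out what the paper only sketches, since the paper tacitly takes $\gamma\ge 0$, as a positive radius requires.
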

\begin{proof} 
Straightforward calculation yields the gradient of $f(\x; \beta, \gamma)$ as:
\begin{equation*}
    \nabla_{\x} f(\x; \beta, \gamma) = 2 \overline\Qb_{\beta} \x + 2 \gamma \left[ \frac{u(\x; \beta) \x - v(\x) \overline\Qb_{\beta} \x}{u(\x; \beta)^2} \right].
\end{equation*}
The Hessian matrix of $f(\x; \beta, \gamma)$ is given by:
\begin{equation}\label{Hessian matrix for obj}
    \nabla^{2}_{\x\x} f(\x; \beta, \gamma) = 2 \overline\Qb_{\beta}+ \frac{2\gamma}{u(\x; \beta)^3} \Ab(\x; \beta),
\end{equation}
where the symmetric matrix $\Ab(\x; \beta)$ is derived as follows:
\begin{equation}\label{matrix_A_corrected}
    \begin{split}
        \Ab(\x; \beta) &:= u(\x; \beta)^2 \Ib - u(\x; \beta) \left[ 2 \left( \overline\Qb_{\beta} \x \x^{\top} + \x \x^{\top} \overline\Qb_{\beta} \right) + v(\x) \overline{\Qb}^{\beta} \right] + 4 v(\x) \overline\Qb_{\beta} \x \x^{\top} \overline\Qb_{\beta} \\
        &= u(\x; \beta)^2 \Ib - 2 u(\x; \beta) \left( \overline\Qb_{\beta} \x \x^{\top} + \x \x^{\top} \overline\Qb_{\beta} \right) - v(\x) u(\x; \beta) \overline\Qb_{\beta} + 4 v(\x) \overline\Qb_{\beta} \x \x^{\top} \overline\Qb_{\beta}\, .
           \end{split}
\end{equation}
For any $\x \in \Delta$, we have the bounds:
\begin{equation*}
    \frac{1}{n} \leq \|\x\|^2_2 \leq 1, \quad \lambda_{\min}(\overline\Qb_{\beta})\|\x\|^2_2 \leq u(\x; \beta) \leq \lambda_{\max}(\overline\Qb_{\beta})\|\x\|^2_2, \quad \|\overline\Qb_{\beta}\x\|_2 \leq \lambda_{\max}(\overline\Qb_{\beta})\|\x\|_2.
\end{equation*}
To ensure convexity, we require $\nabla^2 f(\x) \succeq 0$. Since $2\overline\Qb_{\beta} \succeq 2\lambda_{\min}(\overline\Qb_{\beta}) \Ib$, it suffices to control the spectral norm of the perturbation term caused by $\gamma$.

\noindent By applying the triangle inequality to the components of $\nabla^2_{\x\x}(v(\x)/u(\x; \beta))$ derived from \eqref{matrix_A_corrected}, we obtain the following upper bound:
\begin{equation*}
    \left\| \frac{2}{u(\x; \beta)^3} \Ab(\x; \beta) \right\|_2 \leq \frac{1}{\|\x\|^2_2} \left( \frac{2}{\lambda_{\min}(\overline\Qb_{\beta})} + \frac{10 \lambda_{\max}(\overline{\Qb}_{\beta})}{\lambda_{\min}(\overline\Qb_{\beta})^2} + \frac{8 \lambda_{\max}(\overline\Qb_{\beta})^2}{\lambda_{\min}(\overline\Qb_{\beta})^3} \right)\, .
\end{equation*}
Since $1/\|\x\|^2_2 \leq n$ for $\x \in \Delta$, the condition ensures that $2\lambda_{\min}(\overline\Qb_{\beta}) - \gamma C(\overline\Qb_{\beta}, n) \geq 0$, guaranteeing the minimum eigenvalue of the total Hessian is non-negative. \end{proof}
\begin{remark}
When $\overline\Qb_{\beta}$ is indefinite but strictly copositive (i.e., $u(\x; \beta) > 0$ for $\x \in \Delta$, but $\lambda_{\min}(\overline\Qb_{\beta}) < 0$), the sufficient condition for convexity derived for the positive definite case fails. Specifically, the condition derived for the positive definite case,
\begin{equation*}
    \lambda_{\min}(\nabla^{2}_{\x\x} f(\x; \beta, \gamma)) \geq 2\lambda_{\min}(\overline\Qb_{\beta}) - \gamma \left\| \frac{2}{u(\x; \beta)^3} \Ab(\x; \beta) \right\|_2 \geq 0,
\end{equation*}
becomes impossible to satisfy because $\lambda_{\min}(\overline\Qb_{\beta}) < 0$ and $\gamma > 0$.

\noindent We focus our spectral analysis strictly on the asymptotic behavior near the zero-level set $\mathcal{Z} = \{\x : u(\x; \beta) = 0\}$. The rationale is twofold:
\begin{enumerate}[label=(\roman*)]
    \item Role of Regularization: The regularization term $\gamma/u(\x;\beta)$ acts as a barrier function. Its influence is negligible in the interior where $u(\x; \beta)$ is large (where $\nabla^2_{\x\x} f(\x; \beta,\gamma) \approx 2\overline\Qb_{\beta}$, which is essentially indefinite).
    \item Singularity Dominance: As $u(\x;\beta) \to 0$, the regularization derivatives grow unbounded, potentially overriding the negative eigenvalues of $\overline{\Qb}_{\beta}$.
\end{enumerate}

\noindent To analyze this, we reformulate the Hessian \eqref{Hessian matrix for obj} by organizing terms by powers of $u$:
\begin{equation*} 
    \nabla^{2}_{\x\x} f(\x;\beta, \gamma) = \underbrace{2 \overline{\Qb}_{\beta}}_{\text{Constant}} + \underbrace{\frac{2\gamma}{u} \Ib}_{\mathcal O(u^{-1})} - \underbrace{\frac{2\gamma}{u^2} \mathbf{K}(\x)}_{\mathcal O(u^{-2})} + \underbrace{\frac{8\gamma v(\x)}{u^3} (\overline{\Qb}_{\beta}\x)(\overline{\Qb}_{\beta}\x)^\top}_{\mathcal O(u^{-3})},
\end{equation*}
where $u$ denotes $u(\x; \beta)$, $\mathbf{K}(\x) = v(\x)\overline{\Qb}_{\beta} + 2(\overline{\Qb}_{\beta}\x\x^\top + \x\x^\top\overline{\Qb}_{\beta})$, and $\gamma > 0$. Consider a sequence $\x^{(k)} \to \bar{\x} \in \mathcal{Z}$. We assume the non-degenerate case where the gradient $\overline{\Qb}_{\beta}\x^{(k)} \nrightarrow \mathbf{0}$. Furthermore, since $\x$ resides on the standard simplex, $v(\x^{(k)}) = \|\x^{(k)}\|_2^2$ is strictly bounded away from zero ($v(\x^{(k)}) \asymp 1$). To determine the definiteness, we perform a spectral decomposition relative to the gradient direction. Let $\mathbf{p} = \overline{\Qb}_{\beta}\x$. We decompose an arbitrary direction $\mathbf{d}$ into a component along $\mathbf{p}$ and an orthogonal component $\mathbf{z}$:
$$ \mathbf{d} = \alpha \frac{\mathbf{p}}{\|\mathbf{p}\|_2} + \mathbf{z}, \quad \text{where } \mathbf{z}^\top \mathbf{p} = 0. $$
The quadratic form $q_H(\mathbf{d}) = \mathbf{d}^\top \nabla^2_{\x\x} f(\x;\beta,\gamma) \mathbf{d}$ behaves as follows:

\begin{itemize}
    \item \textbf{Along the Gradient ($\alpha \neq 0$):} The curvature is dominated by the rank-1 term:
    $$ q_H(\mathbf{z}) \approx \alpha^2 \frac{8\gamma v(\x) \|\mathbf{p}\|^2_2}{u^3} > 0. $$
    This creates a strong convex barrier repelling the path orthogonally from the zero-level set.

    \item \textbf{In the Orthogonal Subspace ($\alpha = 0, \mathbf{d} = \mathbf{z}$):} The dominating $\mathcal O(u^{-3})$ term vanishes strictly since $\mathbf{z} \perp \mathbf{p}$. The local convexity is determined by the competition between the stabilizing isotropic term and the indefinite cross-term:
    $$ q_H(\mathbf{z}) \approx \frac{2\gamma}{u} \|\mathbf{z}\|^2_2 - \frac{2\gamma}{u^2} \mathbf{z}^\top \mathbf{K}(\x) \mathbf{z} = \frac{2\gamma}{u} \|\mathbf{z}\|^2_2 - \frac{2\gamma}{u^2}v(\x) \mathbf{z}^\top \overline{\Qb}_{\beta}\mathbf{z}. $$
\end{itemize}
The expansion in the orthogonal subspace reveals a structural limitation. If there exists a direction $\mathbf{z} \perp \overline{\Qb}_{\beta}\x$ such that $\mathbf{z}^\top \overline{\Qb}_{\beta}\mathbf{z} > 0$, the negative curvature term scales as $\mathcal O(u^{-2})$, which asymptotically overwhelms the stabilizing $\mathcal O(u^{-1})$ term regardless of the value of $\gamma$.
Consequently, no finite $\gamma > 0$ can guarantee global positive definiteness arbitrarily close to the zero-level set in the indefinite case. The regularization effectively stabilizes the gradient direction but may leave tangent directions with negative curvature, potentially creating saddle points near the boundary rather than a strictly convex area.

\end{remark}

\begin{example}[Solutions under different $\beta$ and $\gamma$]\label{solutions under different beta and gamma}
In this numerical experiment, we investigate the sensitivity of the solutions to problem \eqref{D3RO for numerical data-driven} with respect to the ambiguity parameters $\beta$ and $\gamma$. We generate an undirected graph $G = (V, E)$ with $n= 12$ nodes, $|E| = 17$ edges and sample size $N=50$. As a benchmark, we first solve the deterministic problem (setting $\beta = 0$ and $\gamma = 0$) in \eqref{D3RO for numerical data-driven}, i.e.,
\begin{equation*}
    \underset{\x \in \Delta}{\min}\,\x^{\top}\Qb_{\rm nom}\x\,,
\end{equation*}
yielding the baseline solution shown in Figure \ref{graph of the original basic graph and its true solution}.

\noindent Figure \ref{graph of solutions under different beta and gamma} illustrates the evolution of the solution topology under varying robustness levels. We define a parameter grid with $\beta \in \{0.005, 0.08, 0.2\}$ and $\gamma \in \{0.005, 0.05, 0.3\}$. The results demonstrate that the solution structure is sensitive to both regularization terms. In the regime of low uncertainty (e.g., $\beta=0.005, \gamma=0.005$), the robust solution preserves the sparse structure of the baseline, identifying a specific subset of critical vertices. However, as either $\beta$ or $\gamma$ increases, the solution becomes progressively more conservative, selecting a significantly denser set of edges and vertices. Simultaneously, the objective value increases monotonically, reflecting the higher cost associated with robust protection. When the ambiguity parameters are sufficiently large (e.g., $\beta = 0.2$ or $\gamma = 0.3$), the solution saturates to a trivial state, encompassing almost the entire graph to mitigate the worst-case scenario. Notably, in this saturated regime, the solution topology becomes invariant to the other parameter, indicating that the robust constraints have fully dominated the optimization landscape. Furthermore, the selected noise levels effectively span the structural regimes defined by the spectral thresholds $\beta_{\max} \approx 0.127$ and $\beta_{\text{conv}} \approx 0.136$. The values $\beta \in \{0.005, 0.08\}$ satisfy $\beta < \beta_{\max}$, maintaining the indefiniteness of $\overline{\Qb}_{\beta}$ and favoring sparse, boundary-localized solutions typical of non-convex StQPs. Conversely, the case $\beta = 0.2$ exceeds $\beta_{\rm conv}$, rendering the quadratic form positive definite. This convexity forces the optimal solution away from the vertices toward the interior of the simplex, resulting in a loss of sparsity that shows as the saturated, high-density topology observed in Figure \ref{graph of solutions under different beta and gamma}.

\noindent Table~\ref{tab:eigen_constants} quantifies the spectral properties governing the convexity of the objective function defined in \eqref{temporary obj of D3RO}. For $\beta \in \{0.005, 0.08\}$, the negative values of $\lambda_{\min}(\overline{\Qb}_{\beta})$ confirm the indefiniteness of the unregularized problem. In the positive definite regime ($\beta=0.2$), the theoretical conservative convexity bound is calculated as $\gamma_{\rm conv} \approx 1.02 \times 10^{-4}$ according to Proposition \ref{convexity of the final objective}. This value is orders of magnitude smaller than the operational radii used in our experiments (e.g., $\gamma=0.3$), indicating that even when the nominal quadratic form is convex, the rational regularization term induces significant non-convexity for non-trivial ambiguity radii.

\end{example}

\begin{figure}[H]
    \centering
    \includegraphics[scale = 0.25]{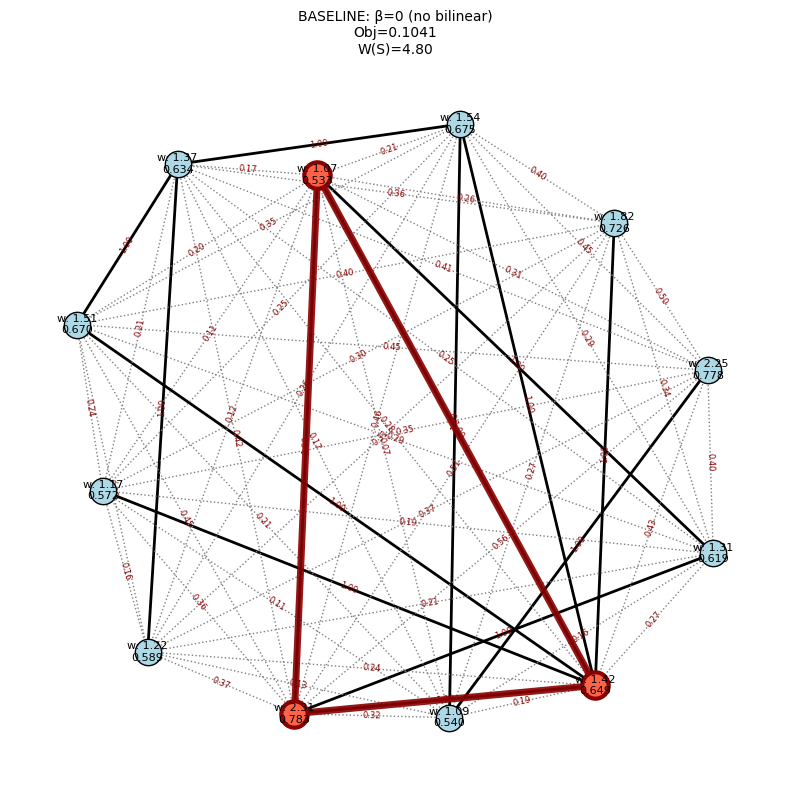}
    \caption{The baseline deterministic solution ($\beta=0, \gamma=0$) with an objective value of $0.1041$. The sample size is $50$ and nodes size is $12$. $\beta_{\max} = 0.127235$, $\beta_{\rm conv}=0.135836$. (Example \ref{solutions under different beta and gamma}).}
    \label{graph of the original basic graph and its true solution}
\end{figure}
\begin{figure}[H]
\centering
\includegraphics[scale = 0.23]{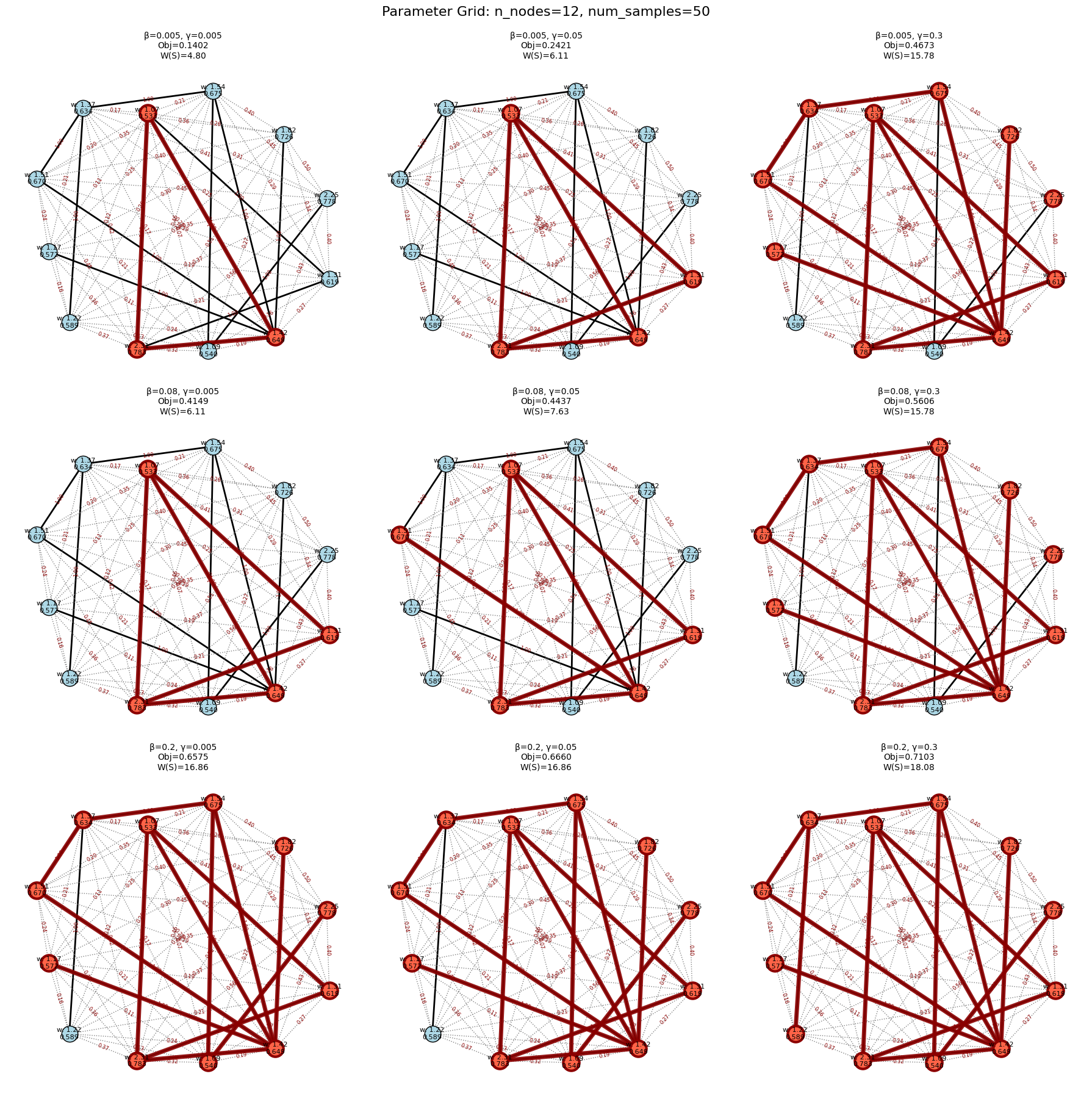}
\caption{Solutions under varying $\beta$ (rows) and $\gamma$ (columns). The parameter $\beta \in \{ 0.005, 0.08, 0.2 \}$ increases from top to bottom, and $\gamma \in \{ 0.005, 0.05, 0.3 \}$ increases from left to right. Solution subgraphs are highlighted in red. $\beta_{\max} = 0.127235$, $\beta_{\text{conv}}=0.135836$. (Example \ref{solutions under different beta and gamma}).}
\label{graph of solutions under different beta and gamma}
\end{figure}
\begin{table}[H]
\centering
\begin{tabular}{@{}lcccc@{}}
\toprule
\textbf{Parameter $\beta$} & $\boldsymbol{\lambda_{\min}}(\overline{\Qb}_{\beta})$ & $\boldsymbol{\lambda_{\max}}(\overline{\Qb}_{\beta})$ & $\boldsymbol{C}(\overline{\Qb}_{\beta}\boldsymbol{, n)}$ & $\boldsymbol{\gamma_{\rm conv}}$ \\
\midrule
0.005 & -1.7883 & 6.7471 & - & - \\
0.08  & -0.7535 & 7.6342 & - & - \\
0.2   & \phantom{-}0.8188 & 9.0588 & \phantom{-}$1.59995 \times 10^{4}$ & $1.0236 \times 10^{-4}$ \\
\bottomrule
\end{tabular}
\caption{Eigenvalues of $\overline{\Qb}_{\beta}$ and calculated constants $C(\overline{\Qb}_{\beta}, n)$ and $\gamma_{\rm conv}$ for different $\beta$ values ($n=12$). $C(\overline{\Qb}_{\beta}, n)$ and $\gamma_{\rm conv}$ are defined in Proposition \ref{convexity of the final objective}. $\lambda_{\min}(\overline{\Qb}_{\beta})$ and $\lambda_{\max}(\overline{\Qb}_{\beta})$ represent the minimum and maximum eigenvalues of $\overline{\Qb}_{\beta}$, respectively.}
\label{tab:eigen_constants}
\end{table}

\begin{example}[Finite sample performance under different $\gamma$]\label{Out-of-sample performance under differential gamma}
In this experiment, we evaluate the out-of-sample performance and computational characteristics of the proposed method under varying ambiguity scaling parameters $\gamma$. We conduct $20$ independent trials based on a same basic matrix $\Qb_{\rm nom}$ in \eqref{sample matrix definition}, each generating a graph with $|V|=10$ nodes and $N=50$ samples, with $\beta$ fixed at $0.01$.

\noindent Figure \ref{graph of solutions for different node scales and sample sizes-graph Objective function values} shows that the objective value increases exponentially as $\gamma$ grows, reflecting the higher cost of robustness against larger distributional ambiguity. The influence of $\gamma$ on the solution topology is evident in Figures \ref{graph of solutions for different node scales and sample sizes-graph clique weight} and \ref{graph of solutions for different node scales and sample sizes-graph density}. In the low-ambiguity regime ($\gamma \leq 10^{-2}$), the mean maximum clique weight remains stable, and the graph density is approximately $1.0$. This indicates that for small $\gamma$, the solver successfully identifies a complete subgraph (a clique). As $\gamma$ increases from $0.01$ to $0.5$, the solution expands to satisfy robustness requirements: the total weight increases monotonically (Figure \ref{graph of solutions for different node scales and sample sizes-graph clique weight}), while the density drops sharply to approximately $0.3$ (Figure \ref{graph of solutions for different node scales and sample sizes-graph density}). This density decrease implies that the solution set $S$ is expanding to include non-connected vertices, eventually encompassing the entire graph when $\gamma \geq 0.5$.

\noindent Finally, Figure \ref{graph of solutions for different node scales and sample sizes-running time} highlights the computational complexity relative to $\gamma$. The solving time remains negligible for $\gamma < 10^{-1}$ but exhibits a sharp peak in the interval $\gamma \in [0.2, 0.9]$. This \textit{phase transition} suggests a competitive interaction between the quadratic term and the rational term in the objective function \eqref{D3RO for numerical data-driven}. When $\gamma$ is either very small or very large, one term dominates, simplifying the optimization landscape for the solver. However, in the intermediate regime ($\gamma \approx 0.4$), the two non-convex terms exert comparable influence, significantly increasing the computational effort required for the global solver to close the optimality gap.

\end{example}

\begin{figure}[H]
    \centering
    \begin{subfigure}[b]{0.40\textwidth}
        \centering        \includegraphics[width=\textwidth]{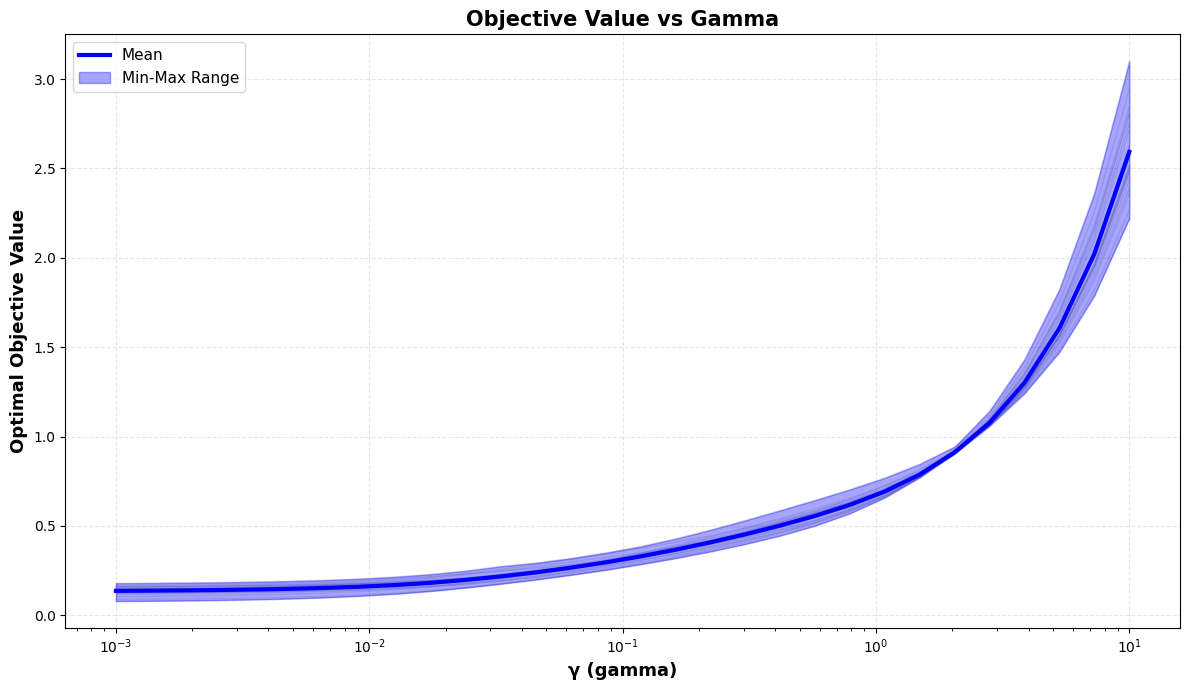}
        \caption{Objective function values.}
        \label{graph of solutions for different node scales and sample sizes-graph Objective function values}
    \end{subfigure}
    \begin{subfigure}[b]{0.40\textwidth}
        \centering        \includegraphics[width=\textwidth]{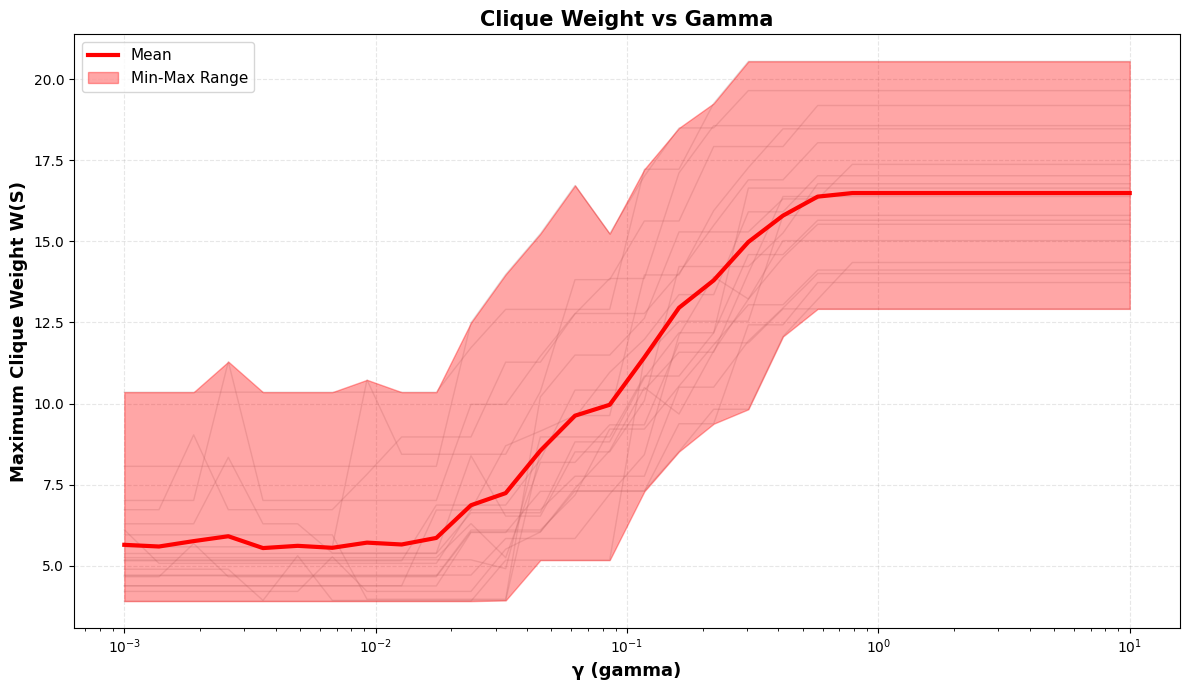}
        \caption{Total maximum clique weight.}
        \label{graph of solutions for different node scales and sample sizes-graph clique weight}
    \end{subfigure}
    \begin{subfigure}[b]{0.40\textwidth}
        \centering        \includegraphics[width=\textwidth]{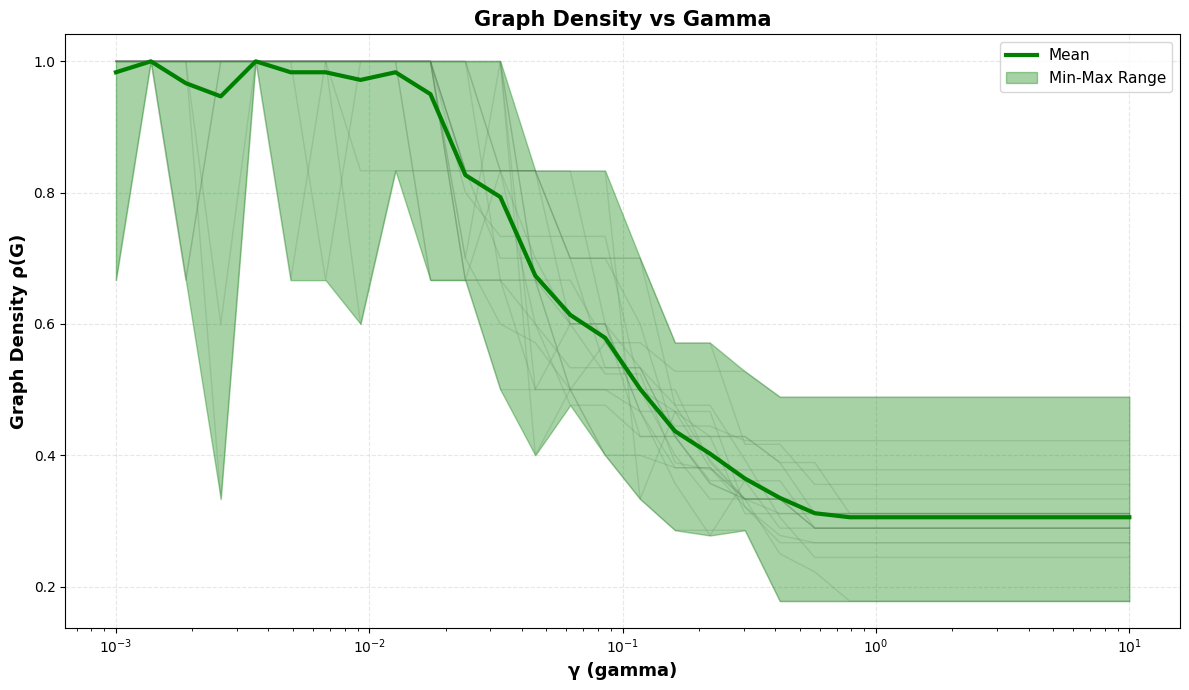}
        \caption{Graph density of final solution graphs.}
        \label{graph of solutions for different node scales and sample sizes-graph density}
    \end{subfigure}
    \begin{subfigure}[b]{0.40\textwidth}
        \centering        \includegraphics[width=\textwidth]{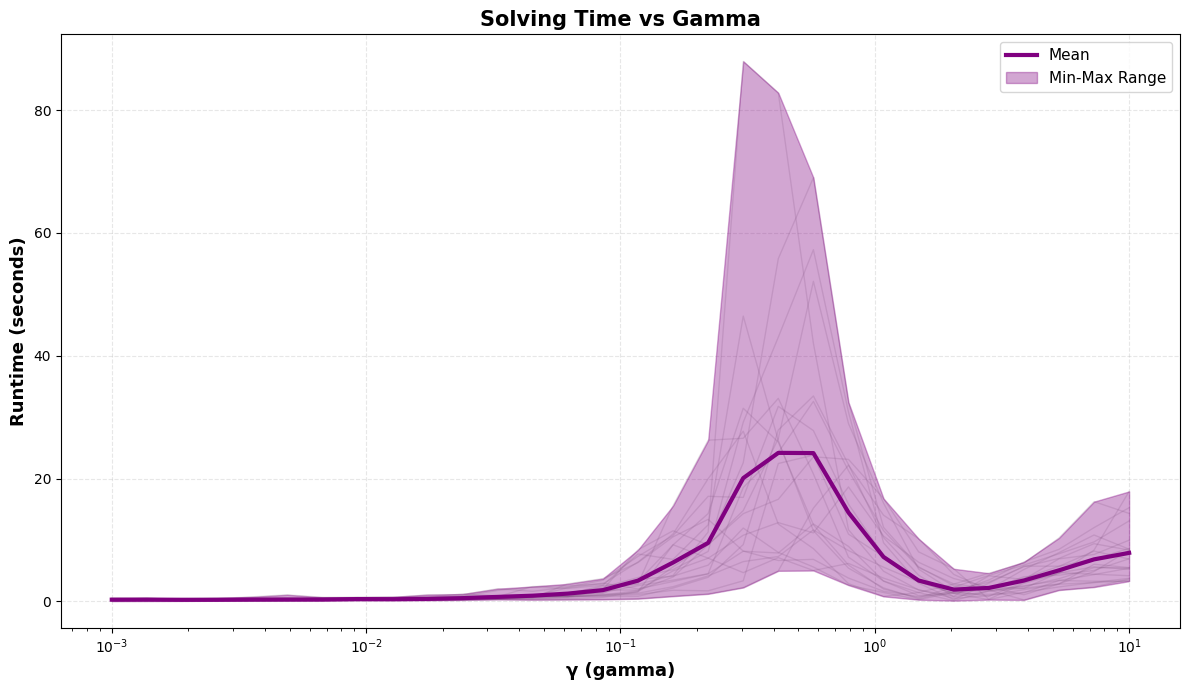}
        \caption{Running time.}
        \label{graph of solutions for different node scales and sample sizes-running time}
    \end{subfigure}
    \caption{Performance metrics under varying $\gamma$ ($\beta = 0.01$, sample size $N=50$). The shaded regions indicate the min-max range across 20 trials, and the solid lines represent the mean. (a) Optimal objective value; (b) Maximum clique weight $W(S)$; (c) Graph density $\rho(G)$; (d) Solver runtime in seconds. (Example \ref{Out-of-sample performance under differential gamma}).}
    \label{graph of solutions for different node scales and sample sizes-obj and max weights}
\end{figure}

\begin{example}[Solutions for different node scales and sample sizes]\label{Solutions for different node scales and sample sizes}
In this example, we examine the sensitivity of the solutions to \eqref{D3RO for numerical data-driven} with respect to the graph dimension (node size $n = |V|$) and data availability (sample size $N$). We fix the ambiguity parameters at $\beta = 0.01$ and $\gamma = 0.01$.

\noindent The first row in Figure~\ref{subfig:varying_nodes} demonstrates the scalability of the method across varying graph sizes. We observe that the solver consistently identifies dense subgraphs (cliques) as the node size $n$ increases. While the optimal objective value decreases with the expansion of the graph dimension, the total weight of the identified clique, $W(S)$, increases significantly from $n=10$ to $n=15$ and subsequently stabilizes around $9.8$ for $n=25$. This indicates that the method effectively localizes the maximum weight clique even within larger search spaces.

\noindent The second row in Figure \ref{subfig:varying_samples} illustrates the impact of sample size on solution quality for a fixed graph of size $n=20$. We observe a distinct convergence behavior: at a low sample size ($N=20$), the solution yields a clique weight of $6.17$. As the sample size increases to $N=50$, the solution converges to a more optimal structure with a weight of $6.64$, which remains stable at $N=100$. This suggests that the proposed formulation is data-efficient, capable of achieving robust, high-quality solutions with a moderate number of samples.
\end{example}

\begin{figure}[H]
    \centering
    \begin{subfigure}{0.87\textwidth}
        \includegraphics[scale = 0.27]{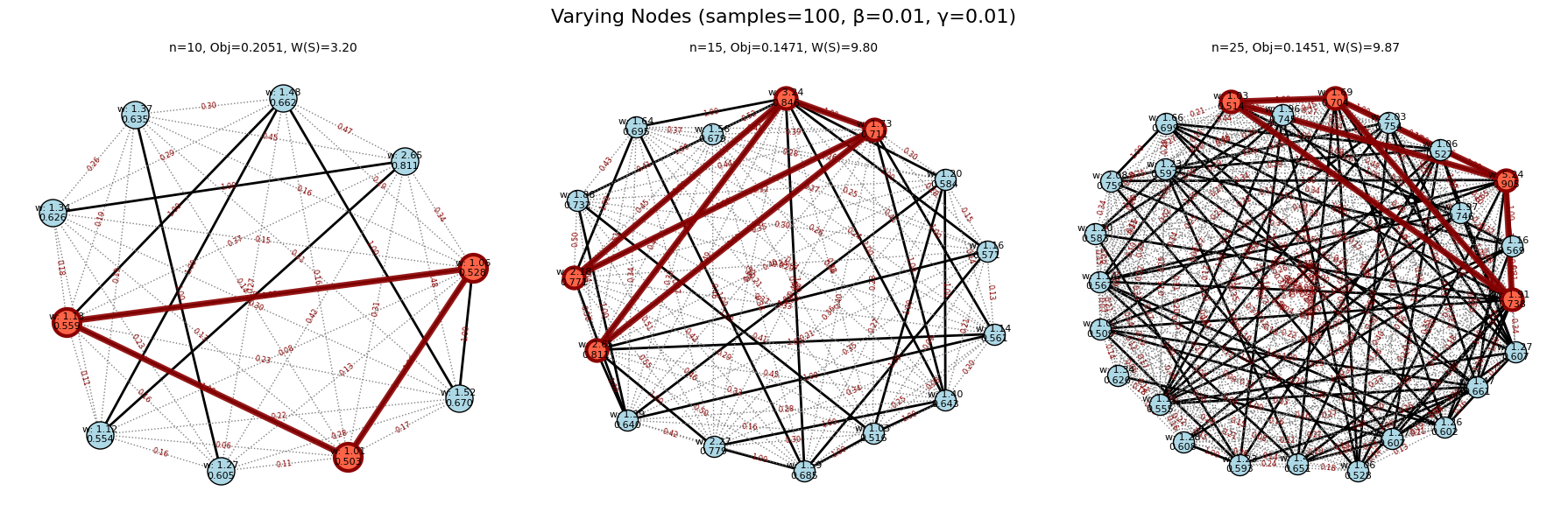}
        \caption{Varying node sizes ($N=100$).}
        \label{subfig:varying_nodes}
    \end{subfigure}
    \\
    \hfill
    \begin{subfigure}{0.93\textwidth}
        \includegraphics[scale = 0.27]{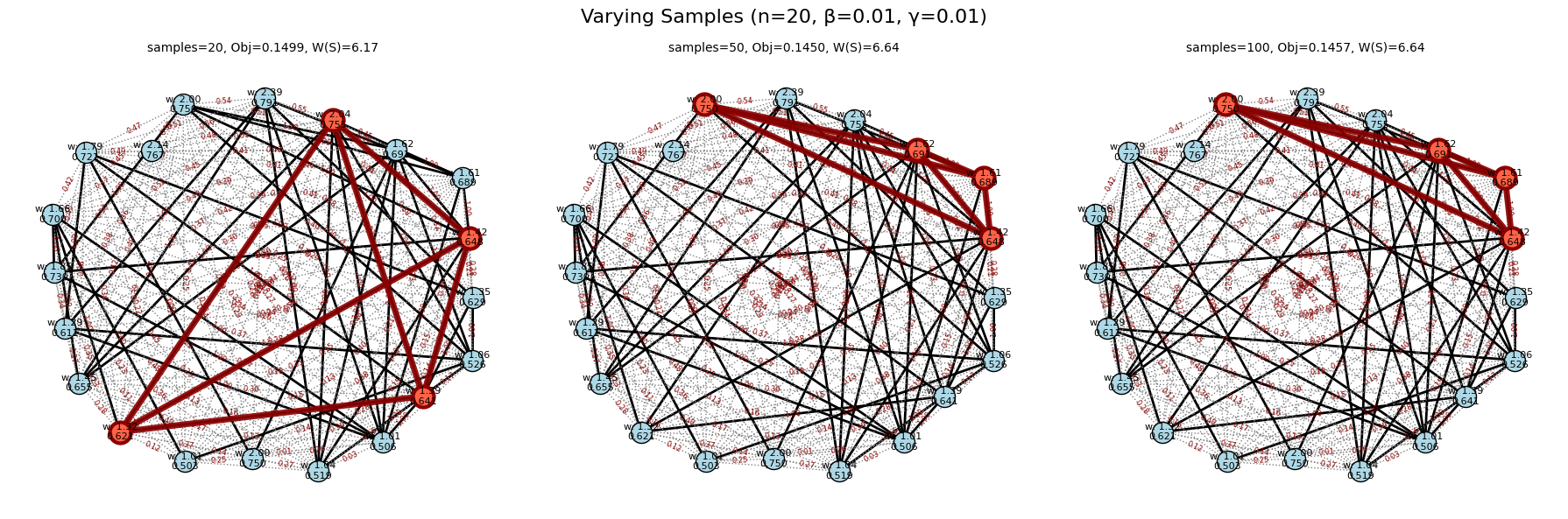}
        \caption{Varying sample sizes ($n=20$).}
        \label{subfig:varying_samples}
    \end{subfigure}
    \caption{Solutions for different node scales and sample sizes ($\gamma = 0.01$ and $\beta = 0.01$). The first row has fixed sample size $N=100$ and varying node sizes $n \in \{10, 15, 25\}$. The second row has fixed node size $n=20$ and varying sample sizes $N \in \{20, 50, 100\}$. (Example \ref{Solutions for different node scales and sample sizes}).}
    \label{graph of solutions for different node scales and sample sizes}
    \end{figure}

\begin{example}[Frequency of solutions through several trials under different $\beta$ and $\gamma$]\label{Frequency of solutions through several trials under different beta and gamma}

Figure~\ref{graph of frequency of solutions through several trials under different beta and gamma} visualizes the stability and topology of the solutions to \eqref{D3RO for numerical data-driven} by plotting the frequency of selected edges and nodes over repeated trials. The grid illustrates the interplay between the noise level $\beta$ (rows) and the ambiguity radius $\gamma$ (columns).

\noindent A clear trend is observed regarding the structural evolution driven by both parameters. In the first column, where the ambiguity radius is small ($\gamma = 0.005$), the solutions at low noise ($\beta = 0.005$) are sparse and exhibit weak connectivity. However, as $\beta$ increases to $0.2$ (moving down the first column), we observe a distinct transition towards a denser topology. Contrary to the expectation that higher noise yields more scattered solutions, the emergence of thicker, darker edges indicates that high noise levels induce a form of implicit regularization, forcing the solver to converge onto a consistent, highly connected subgraph similar to that observed in robust regimes.

\noindent As $\gamma$ increases to $0.05$ and $0.3$ (second and third columns), the solution topology rapidly stabilizes into this dense clique-like configuration. The edges in these columns are significantly thicker and darker, indicating that the same set of edges is consistently selected across almost all trials (high frequency). Notably, in these high-$\gamma$ regimes, the solution structure becomes largely invariant to the noise level $\beta$. Comparing the plots within the third column, the graph topology remains virtually identical regardless of whether $\beta$ is $0.005$ or $0.2$. This demonstrates that a sufficiently large ambiguity radius $\gamma$ dominates the influence of the sample noise $\beta$, locking the solver into a robust configuration that maximizes the worst-case objective.
 




\noindent To quantify the computational performance, we report three standard metrics provided by the {\tt Gurobi} solver. The \textit{Best Objective} corresponds to the objective value of the incumbent solution, serving as an upper bound for the minimization problem. The \textit{Best Bound} represents the best theoretical lower bound derived from the branch-and-bound relaxation. Finally, the \textit{Gap} denotes the relative optimality gap, defined as $\frac{|\text{Best Objective} - \text{Best Bound}|}{|\text{Best Objective}|}$.

\noindent Several key conclusions may be drawn from the solver statistics presented in Figure~\ref{Box plots of best objective, best bound and gap} and Table~\ref{tab:solver_stats_beta_gamma}. Overall, {\tt Gurobi} demonstrated exceptional performance in tackling this non-convex optimization problem. The average optimality gap remained consistently below $1\%$ across all parameter configurations, indicating that the feasible solutions found were of high quality and proximal to the global optimum.

\noindent The influence of parameter $\gamma$ remains clear and monotonic: with $\beta$ fixed, both the best objective function value and the best bound value increase as $\gamma$ rises from 0.005 to 0.3. This trend reinforces the interpretation that $\gamma$ acts primarily as a scaling parameter or a cost coefficient within the model structure.

\noindent The role of parameter $\beta$ reveals more complex structural properties. Unlike previous observations where $\beta$ had a negligible impact, the new data indicates a strong positive correlation: increasing $\beta$ leads to a significant elevation in the average objective value. Furthermore, $\beta$ appears to govern the solution variability. At $\beta=0.005$, the standard deviation of the objective is minimal, but it grows significantly as $\beta$ increases to 0.08 and 0.2. This suggests that larger $\beta$ values introduce higher sensitivity or instance-dependency into the optimization landscape.

\noindent Notably, a distinct interaction effect is observed regarding the stability of the optimality gap. As shown in the figure, the standard deviation of the gap is conspicuously larger (approximately an order of magnitude higher) in the regime where both parameters are relatively small (specifically, combinations of $\beta \in \{0.005, 0.08\}$ and $\gamma \in \{0.005, 0.05\}$). In contrast, scenarios involving larger parameter values (e.g., $\gamma=0.3$ or $\beta=0.2$) exhibit minimal gap variance, despite a slight increase in the average gap magnitude. This phenomenon implies that higher values of $\beta$ and $\gamma$ may exert a stabilizing effect on the convergence behavior —possibly by convexifying the solution space or imposing stronger penalties— thereby enabling the solver to consistently terminate with a uniform level of precision. Conversely, lower parameter settings appear to yield a \textit{looser} landscape, resulting in distinct fluctuations in the final convergence quality.

\noindent In summary, the numerical experiments illustrate the distinct roles of the parameters: $\gamma$ functions as a linear scaling factor, $\beta$ impacts the volatility of the solution search, and their combined magnitude correlates with the consistency of the solver's convergence.

\end{example}

 \begin{figure}[H]
    \centering
    \includegraphics[scale = 0.20]{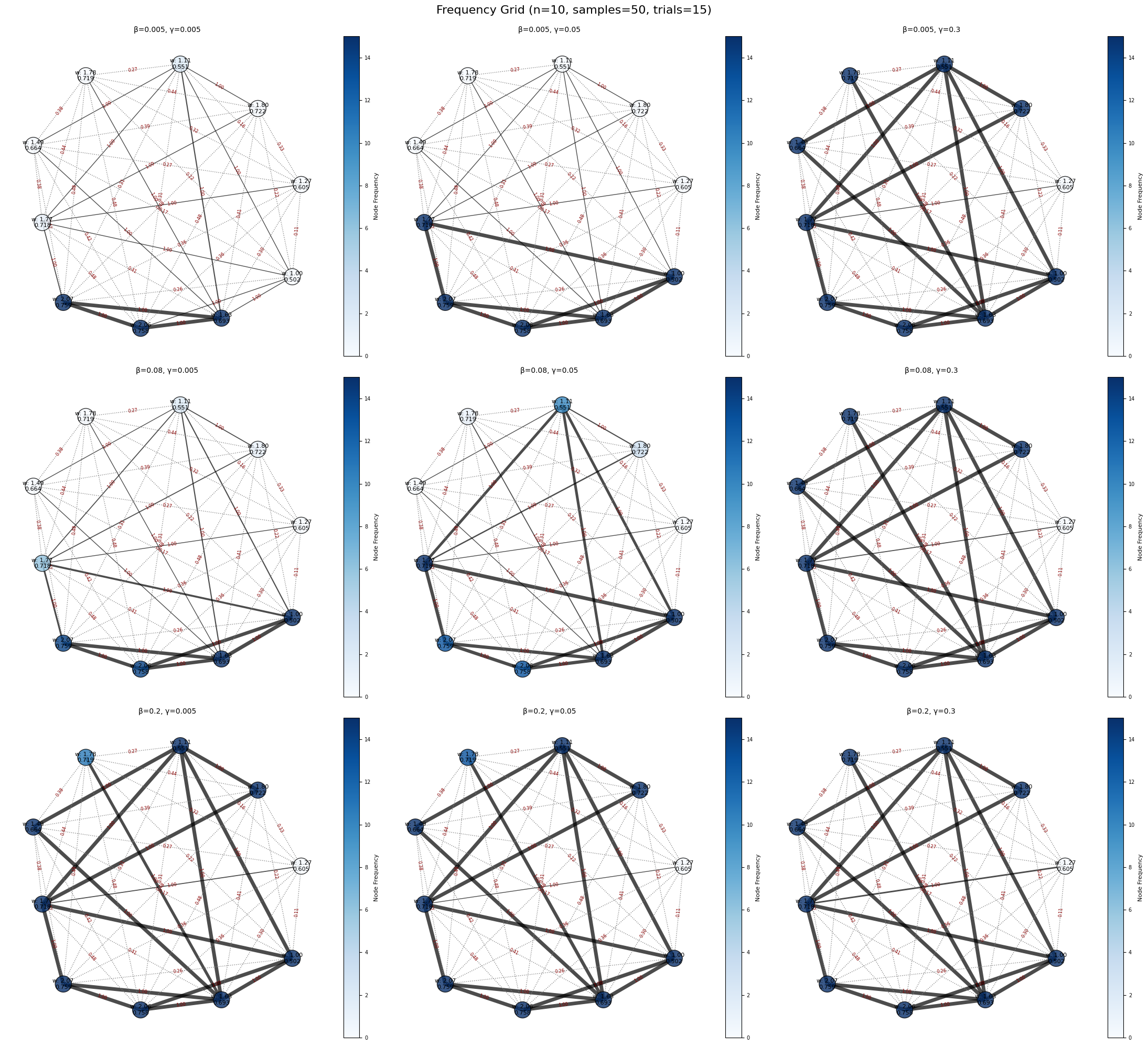}
    \caption{The frequency for solutions graphs under different $\beta$ and $\gamma$. The trials size is $15$ including $50$ samples within each trial. The node size is $10$. The parameter $\beta \in \{ 0.005, 0.08, 0.2 \}$ increases from top to bottom, and $\gamma \in \{ 0.005, 0.05, 0.3 \}$ increases from left to right. (Example~\ref{Frequency of solutions through several trials under different beta and gamma}).}
    \label{graph of frequency of solutions through several trials under different beta and gamma}
    \end{figure}

    \begin{figure}[H]
    \centering
    \includegraphics[scale = 0.32]{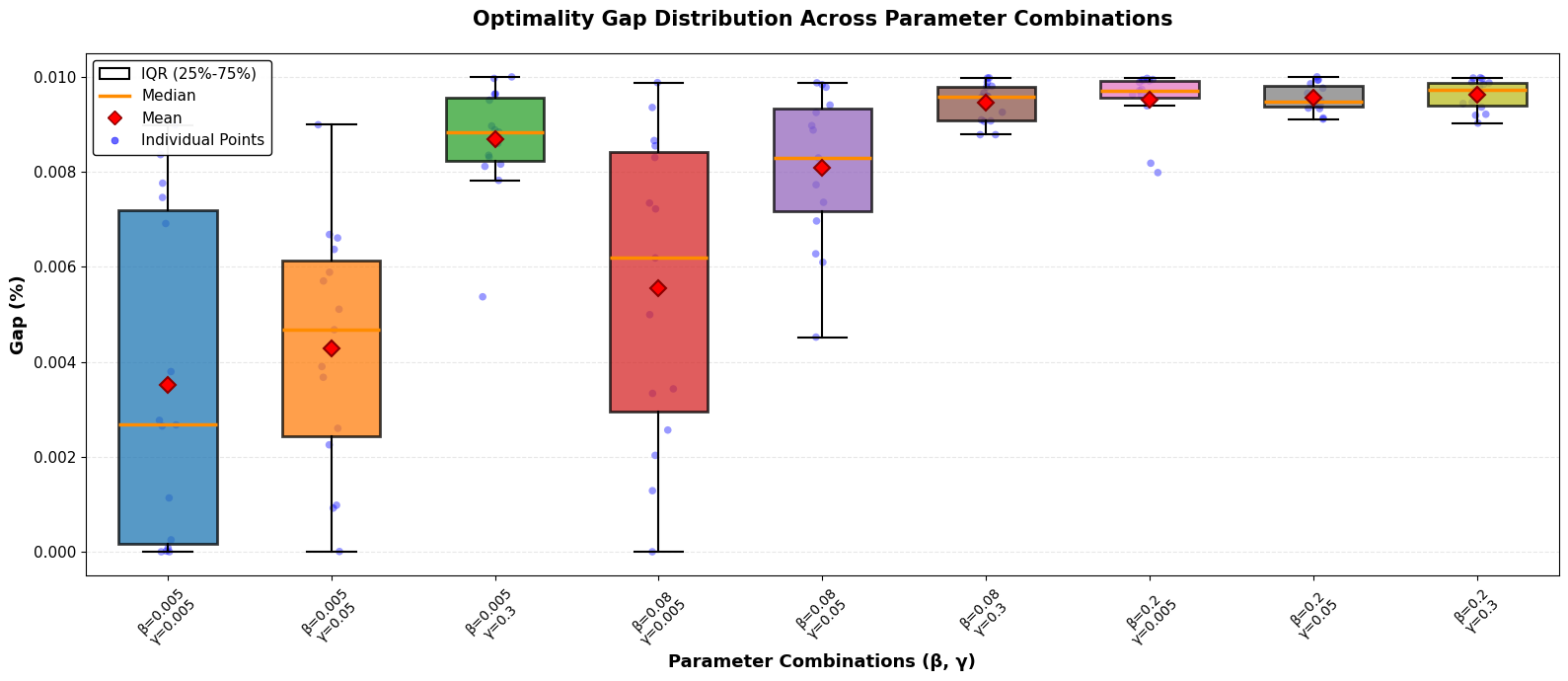}
    \caption{Box plots of gap under different $\beta$ and $\gamma$ (Example \ref{Frequency of solutions through several trials under different beta and gamma}).}
    \label{Box plots of best objective, best bound and gap}
    \end{figure}
\begin{table}[H]
\centering
\begin{tabular}{@{}llccc@{}}
\toprule
& & \multicolumn{3}{c}{\textbf{Parameter $\gamma$}} \\
\cmidrule(l){3-5}
\textbf{Parameter $\beta$} & \textbf{Quantities} & \textbf{0.005} & \textbf{0.05} & \textbf{0.3} \\
\midrule

\multirow{3}{*}{\large\textbf{0.005}}
& Best Objective & 0.1195 ($\pm$0.0007) & 0.2353 ($\pm$0.0003) & 0.4246 ($\pm$0.0002) \\
& Best Bound     & 0.1195 ($\pm$0.0007) & 0.2353 ($\pm$0.0003) & 0.4245 ($\pm$0.0002) \\
& Gap (\%)       & 0.0035 ($\pm$0.0033) & 0.0043 ($\pm$0.0025) & 0.0087 ($\pm$0.0011) \\
\midrule

\multirow{3}{*}{\large\textbf{0.08}}
& Best Objective & 0.3391 ($\pm$0.0103) & 0.3726 ($\pm$0.0088) & 0.4868 ($\pm$0.0055) \\
& Best Bound     & 0.3391 ($\pm$0.0103) & 0.3725 ($\pm$0.0088) & 0.4867 ($\pm$0.0055) \\
& Gap (\%)       & 0.0055 ($\pm$0.0031) & 0.0081 ($\pm$0.0015) & 0.0095 ($\pm$0.0004) \\
\midrule

\multirow{3}{*}{\large\textbf{0.2}}
& Best Objective & 0.5375 ($\pm$0.0186) & 0.5489 ($\pm$0.0178) & 0.6087 ($\pm$0.0142) \\
& Best Bound     & 0.5375 ($\pm$0.0186) & 0.5488 ($\pm$0.0178) & 0.6087 ($\pm$0.0142) \\
& Gap (\%)       & 0.0095 ($\pm$0.0006) & 0.0096 ($\pm$0.0003) & 0.0096 ($\pm$0.0003) \\
\bottomrule
\end{tabular}
\caption{Solver statistics for different parameters $\beta$ and $\gamma$. The format within each cell is mean ($\pm$standard deviation), rounded to four decimal places.}\label{tab:solver_stats_beta_gamma}
\end{table}

\section{Conclusion}

In this paper, we discussed the StQP under Wasserstein ambiguity. We demonstrated that this non-convex problem admits an exact reformulation as a deterministic StQP with a spectral regularization term, a result we further extended to decision-dependent ambiguity sets. Additionally, we derived finite-sample out-of-sample performance guarantees specifically for the random quadratic objective. Finally, we applied this framework to the maximum weighted clique problem, analyzing the spectral conditions for convexity and the resulting solution topology.


\section*{Acknowledgements}
\noindent The authors thank Ralf Werner, Daniel Kuhn and Bernardo Pagnoncelli for stimulating discussions and valuable suggestions. The authors are indebted to PGMO and HCODE for supporting several research visits between Vienna and Paris. The authors are indebted to VGSCO for financial support enabling presentation of this work at various major conferences: ECSO-CSM 2024, PGMODays (2024 and 2025), and ICCOPT 2025. The authors profited from the feedback of delegates to these meetings, which truly helped improving the quality of this paper.

\bibliography{refs}

\appendix
\section{Non-smooth norms may allow for a minimax theorem}\label{sec:AppendixA non-smooth norms}

\subsection{Maximum norm \texorpdfstring{$||\cdot||_\infty$}{||.||infty}}\label{sec:AppendixA1-l-infty-norm}
We first deal with the non-smooth maximum norm ${\norm \cdot}_\infty$. The following result can also be derived from~\cite[Theorems 6.14 and 6.46]{rockafellar1998variational}, but we provide a concise proof in order to be self-contained.

\begin{lemma}\label{normal cone of M set identical to R+}
Let  $\theta > 0$, $\norm{\cdot} = \norm{\cdot}_{\infty}$ and $\w\in \R^m$. Then, for $\z:=\w + \theta \e$ we have $\mathcal{N}_{B_\theta(\w)}(\z) = \mathbb R^m_+$.
\end{lemma}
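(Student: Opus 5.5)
The plan is to verify both inclusions directly from the definition \eqref{normal cone of M set}. The key preliminary observation is that, with respect to $\norm{\cdot}_\infty$, the ball $B_\theta(\w)$ is the box $\{\y\in\R^m : w_i-\theta\le y_i\le w_i+\theta,\ i\in[m]\}$. The point $\z=\w+\theta\e$ is the vertex of this box at which every coordinate attains its upper bound.

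\emph{Inclusion $\R^m_+\subseteq \mathcal N_{B_\theta(\w)}(\z)$.} Take $\v\ge\boldsymbol 0$ and any $\y\in B_\theta(\w)$. Then $y_i\le w_i+\theta=z_i$ holds for every $i$. Multiplying by $v_i\ge0$ and summing gives $\v\T\y\le\v\T\z$, which is exactly the defining inequality of the normal cone.

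\emph{Inclusion $\mathcal N_{B_\theta(\w)}(\z)\subseteq\R^m_+$.} Suppose $\v$ lies in the normal cone but $v_j<0$ for some index $j$. Consider the test point $\y:=\z-2\theta\e_j$. Its $j$-th coordinate is $w_j-\theta$ and all its other coordinates coincide with those of $\z$, so $\norm{\y-\w}_\infty=\theta$ and hence $\y\in B_\theta(\w)$. Evaluating, $\v\T\y=\v\T\z-2\theta v_j>\v\T\z$, which contradicts membership of $\v$ in the normal cone. Therefore $\v\ge\boldsymbol 0$.

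There is no real obstacle in this argument. The only care needed is choosing a test point that stays inside the box; moving a single coordinate down to its lower bound, as above, does this while keeping every other coordinate at its upper bound.
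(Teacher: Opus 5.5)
Your proof is correct and follows essentially the same route as the paper: the paper also identifies $B_\theta(\w)$ with the box, uses the same test points $\z-2\theta\e_i$ (arguing directly that $0\le \v\T(\z-\z_i)=2\theta v_i$ rather than by contradiction), and proves the reverse inclusion by the same coordinatewise inequality $y_i\le w_i+\theta$. Your reverse inclusion is stated more cleanly than the paper's, whose text quantifies over $\y\in\mathcal N_{B_\theta(\w)}(\z)$ where it should read $\y\in B_\theta(\w)$.
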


\begin{proof}
Under $\norm{\cdot} = \norm{\cdot}_{\infty}$, 
\begin{equation}\label{normal cone of M set identical to R+ equ 2}
B_\theta(\w) = \{\y \in \mathbb R^m: w_i - \theta \leq y_i \leq w_i+\theta\,, \text{ for all } i\in [m]\}\,.
\end{equation}
First take any arbitrary $\v \in \mathcal{N}_{B_\theta(\w)}(\z) $; for $i\in[m]$ define $\z_i :=\z - 2\theta \e_i$, where $\e_i\in \R^m$ denotes the $i$-th standard basis vector ($i$-th column of the identity matrix of order $m$).
%
%
Obviously $\z_i \in B_\theta(\w)$ for all $i\in [m]$. Therefore, from the definition of the normal cone \eqref{normal cone of M set}
we obtain
    \begin{equation*}
    0 \leq \v^{\top} (\z - \z_i) =  2 \theta \e_i\T \v \,,
    \end{equation*}
    which implies  $\v \in \mathbb R^m_+$ and so  $\mathcal{N}_{B_\theta(\w)}(\z)\subseteq \mathbb{R}^{m}_{+}$. To establish the reverse inclusion, let $\v \in \mathbb R^m_+$ be arbitrary. From \eqref{normal cone of M set identical to R+ equ 2} it follows that
$$
\v^{\top}\z = \sum_{i=1}^mv_iz_i = \sum_{i=1}^mv_i(w_i + \theta) \geq \sum_{i=1}^mv_iy_i = \v^{\top}\y \quad \text{ for every } \y \in \mathcal{N}_{B_\theta(\w)}(\z)\,,
$$
which implies that $\mathbb R^m_+\subseteq\mathcal{N}_{B_\theta(\w)}(\z)$.
\end{proof}

\begin{corollary}\label{maximal element under infty norm} Under $\norm{\cdot} = \norm{\cdot}_{\infty}$ with $\h(\x) = \operatorname{svec}(\x\x^{\top})$ on the standard simplex $\Delta \subset \mathbb{R}^{n}$ and $\z:=\mathbb E_{\mathbb P^{\prime}}[\boldsymbol{\tilde \xi^{\prime}}] + \theta \e$, we have
    \begin{equation*}
        \lbrace \h(\x): \x \in \Delta \rbrace \subset \mathcal{N}_{\mathcal M_{\theta,p}(\mathbb P^{\prime})}(\z)\,.
    \end{equation*}
\end{corollary}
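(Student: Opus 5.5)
The plan is to combine Theorem~\ref{expectation ball and wasserstein ball} with Lemma~\ref{normal cone of M set identical to R+}. First, by Theorem~\ref{expectation ball and wasserstein ball} applied with the norm $\norm{\cdot}_\infty$ (the theorem holds for any norm on $\R^m$ and any $p\ge 1$), we identify
\[
\mathcal M_{\theta,p}(\mathbb P^{\prime}) = B_\theta(\w)\,,\qquad \w:=\mathbb E_{\mathbb P^{\prime}}[\boldsymbol{\tilde \xi^{\prime}}]\,,
\]
where $B_\theta(\w)$ is now the $\ell_\infty$-box described in \eqref{normal cone of M set identical to R+ equ 2}. In particular, $\z=\w+\theta\e$ is the upper corner of this box, so $\z\in\mathcal M_{\theta,p}(\mathbb P^{\prime})$ because $\norm{\z-\w}_\infty=\theta$.

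Second, Lemma~\ref{normal cone of M set identical to R+} with this $\w$ gives
\[
\mathcal N_{\mathcal M_{\theta,p}(\mathbb P^{\prime})}(\z)=\mathcal N_{B_\theta(\w)}(\z)=\R^m_+\,.
\]
The inclusion therefore reduces to showing $\h(\x)=\svec(\x\x\T)\in\R^m_+$ for every $\x\in\Delta$. This holds because every $\x\in\Delta$ satisfies $x_i\ge0$. Hence the diagonal entries $x_i^2$ of $\x\x\T$ are nonnegative, and the off-diagonal entries $\sqrt2\,x_ix_j$ in the chosen ordering of $\svec$ are nonnegative as well.

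No step is a genuine obstacle. The only point requiring care is that $\svec$ maps $\mathcal S^n$ into $\R^m$ with $m=n(n+1)/2$, and that the $\ell_\infty$-ball and the vector $\e$ live in this $\R^m$. Scaling the off-diagonal entries by $\sqrt2$ does not affect signs, so nonnegativity transfers directly and the corollary follows. As a consequence, Theorem~\ref{thm:maximal-element} will then give the minimax reduction with a worst-case distribution that does not depend on $\x$.
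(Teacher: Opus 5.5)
Your proposal is correct and matches the paper's proof, which is the one-liner ``follows from Lemma~\ref{normal cone of M set identical to R+} since $\h(\Delta)\subset\R^m_+$''. You additionally spell out the identification $\mathcal M_{\theta,p}(\mathbb P^{\prime})=B_\theta(\w)$ via Theorem~\ref{expectation ball and wasserstein ball} and the entrywise nonnegativity of $\operatorname{svec}(\x\x^{\top})$, both of which the paper uses tacitly.
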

\begin{proof} Follows immediately from Lemma \ref{normal cone of M set identical to R+} since
$\h(\Delta) \subset \mathbb{R}^{m}_{+}$.
\end{proof}

\noindent This allows for the following reformulation of the \ref{DRStQP} under the maximum norm $\norm{\cdot} = \norm{\cdot}_{\infty}$.

\begin{theorem}\label{thm:DRStQP-maximum-norm}
Let $p\geq 1$, $\theta > 0$, $\norm{\cdot} = \norm{\cdot}_{\infty}$ be the maximum norm on $\mathbb R^m$ and $\mathbb P^{\prime} = \widehat{\mathbb P}_N$ be the empirical measure. Then~\eqref{DRStQP} is equivalent to a deterministic StQP, which is independent of the choice of $p$. In particular:
\begin{equation*}
  \underset{\x \in \Delta}{\vphantom{\sup}\inf} \, \underset{\PP \in \mathbb B_{\theta,p} (\widehat\PP_N)}{\sup} \,\mathbb E_{\PP}[\x^{\top}\widetilde\Qb\x] = \frac{\theta}{\sqrt{2}} + \min_{\x \in \Delta} \, \x^{\top}\left(\overline \Qb + \frac{\theta(\sqrt{2}-1)}{\sqrt{2}}\, \Ib\right)\x \,,
\end{equation*}
where $\overline{\Qb}:=\frac{1}{N}\sum_{i=1}^N\widehat{\Qb}_i$ denotes the sample mean, and for all $\x\in\Delta$, the worst-case distribution is given by the same $\mathbb P^* = T_{\#}\widehat{\mathbb P}_N$, with $T(\Zb) = \Zb + \tfrac{\theta}{\sqrt{2}}[\e\e^{\top} + (\sqrt{2}-1)\Ib]$. Therefore the minimax theorem holds: the minimax gap is zero.
\end{theorem}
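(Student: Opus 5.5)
The plan is to reduce to the finite-dimensional robust problem via Theorem~\ref{expectation ball and wasserstein ball} and then invoke the maximal-element criterion of Theorem~\ref{thm:maximal-element}, whose hypothesis is exactly Corollary~\ref{maximal element under infty norm}.

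First, identify random matrices with $\boldsymbol{\tilde\xi}=\svec(\widetilde\Qb)\in\R^m$ and write the objective as $\h(\x)\T\boldsymbol{\tilde\xi}$ with $\h(\x)=\svec(\x\x\T)$. By Theorem~\ref{expectation ball and wasserstein ball} (valid for any norm and any $p\ge1$), the set of means of distributions in $\mathbb B_{\theta,p}(\widehat\PP_N)$ is the $\ell_\infty$-ball $B_\theta(\svec(\overline\Qb))$; this already shows independence of $p$. Corollary~\ref{maximal element under infty norm} gives $\h(\Delta)\subset\mathcal N_{\mathcal M_{\theta,p}(\widehat\PP_N)}(\z)$ with $\z=\svec(\overline\Qb)+\theta\e$, since $\h(\Delta)\subset\R^m_+$. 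Theorem~\ref{thm:maximal-element} then shows that $\z$ is $\Delta$-maximal, so the minimax value equals $\min_{\x\in\Delta}\h(\x)\T\z$. In addition, $\sup_{\PP}\inf_{\x}\ge \inf_\x \h(\x)\T\z$ because $\z$ is attained by a single distribution. Hence the minimax gap is zero.

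Next, attain $\z$ by the translation $T(\boldsymbol\xi)=\boldsymbol\xi+\theta\e$ in $\svec$ coordinates, as in the proof of Theorem~\ref{expectation ball and wasserstein ball}. The coupling $(\boldsymbol{\tilde\xi}',T(\boldsymbol{\tilde\xi}'))$ has cost $\|\theta\e\|_\infty^p=\theta^p$, so $\PP^*=T_\#\widehat\PP_N$ lies in the ball and is independent of $\x$. Translating back with $\svec^{-1}$, the vector $\theta\e$ corresponds to the matrix $\Zb_\theta$ with diagonal entries $\theta$ and off-diagonal entries $\theta/\sqrt2$, because off-diagonal $\svec$ entries carry the factor $\sqrt2$. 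Thus
\[\Zb_\theta=\tfrac{\theta}{\sqrt2}\Eb+\theta\bigl(1-\tfrac1{\sqrt2}\bigr)\Ib=\tfrac{\theta}{\sqrt2}\bigl[\Eb+(\sqrt2-1)\Ib\bigr],\]
which matches $T$ in the statement.

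Finally, compute $\h(\x)\T\z=\x\T\overline\Qb\x+\x\T\Zb_\theta\x$. On $\Delta$ we have $\x\T\Eb\x=1$, which yields the constant $\theta/\sqrt2$ plus $\x\T\bigl(\overline\Qb+\tfrac{\theta(\sqrt2-1)}{\sqrt2}\Ib\bigr)\x$. Attainment of the minimum follows from compactness of $\Delta$.

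The main obstacle is bookkeeping rather than ideas. One must make sure the $\ell_\infty$-norm is taken on $\svec$ coordinates, so that the $\sqrt2$ scaling of off-diagonals produces $\theta/\sqrt2$ rather than $\theta$ in the matrix. One must also check that the minimax step of Theorem~\ref{thm:maximal-element} applies when $\h$ is nonlinear in $\x$; it does, because only the maximal-element inequality is used, not convexity in $\x$.
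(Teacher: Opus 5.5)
Your proposal is correct and follows the paper's own proof. Like the paper, you rewrite the objective with $\h(\x)=\svec(\x\x\T)$, apply Corollary~\ref{maximal element under infty norm} and Theorem~\ref{thm:maximal-element} at $\z=\svec(\overline\Qb)+\theta\e$, undo the $\sqrt{2}$ scaling of $\svec$, and use $\x\T\Eb\x=1$ on $\Delta$. Your explicit checks that $T_{\#}\widehat{\PP}_N$ lies in the ball and attains $\z$, which give the sup--inf lower bound directly, spell out what the paper leaves to Theorems~\ref{expectation ball and wasserstein ball} and~\ref{thm:maximal-element}.
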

\begin{proof}
  Abbreviate  the sample mean by $\boldsymbol{\overline{\xi}}  = \mathbb E_{\widehat{\mathbb P}_N}[\boldsymbol{\tilde{\xi}^{\prime}}]$. Rewrite
 $$
 \underset{\x \in \Delta}{\vphantom{\sup}\inf} \, \underset{\PP \in \mathbb B_{\theta,p} (\widehat\PP_N)}{\sup} \,\mathbb E_{\PP}[\x^{\top}\widetilde\Qb\x] =  \underset{\x \in \Delta}{\vphantom{\sup}\inf} \, \underset{\PP \in \mathbb B_{\theta,p} (\widehat\PP_N)}{\sup} \,\mathbb E_{\PP}[\h(\x)^{\top}\boldsymbol{\tilde \xi}]\,,
 $$
 with $\h(\x)=\operatorname{svec}(\x\x^{\top})$. By Corollary \ref{maximal element under infty norm}, for $\z:= \boldsymbol{\overline{\xi}} + \theta\e$ we have
\begin{equation*}
        \lbrace \h(\x): \x \in \Delta \rbrace \subset \mathcal{N}_{\mathcal M_{\theta,p}(\mathbb P^{\prime})}(\z)\,.
    \end{equation*}
By Theorem \ref{thm:maximal-element}, this implies that
$$
\underset{\x \in \Delta}{\vphantom{\sup}\inf} \, \underset{\PP \in \mathbb B_{\theta,p} (\widehat\PP_N)}{\sup} \,\mathbb E_{\PP}[\h(\x)^{\top}\boldsymbol{\tilde \xi}] = \inf_{\x \in \Delta} \h(\x)^{\top}\z = \inf_{\x \in \Delta} \h(\x)^{\top}(\boldsymbol{\overline{\xi}} + \theta\e)\,.
$$
Since the symmetric vectorization operator multiplies off-diagonal entries by $\sqrt{2}$, we obtain
$$
\inf_{\x \in \Delta} \h(\x)^{\top}(\boldsymbol{\overline{\xi}} + \theta\e) = \inf_{\x \in \Delta} \x^{\top}\left(\overline{\Qb} + \frac{\theta}{\sqrt{2}}(\e\e^{\top} + (\sqrt{2}-1)\Ib)\right)\x\,.
$$
Finally, the feasibility condition $\x^{\top}\e = 1$ yields the claim.
\end{proof}

\subsection{Manhattan norm \texorpdfstring{$||\cdot||_1$}{||.||1}}\label{sec:AppendixA2-l-1-norm}

\noindent Next, we will see that the result of Corollary \ref{maximal element under infty norm} is invalid for the situation under $\| \cdot \| := \| \cdot \|_{1}$, although $\| \cdot \|_{1}$ is non-smooth. To this end, we first provide a reformulation for the normal cone of the $\ell_1$-ball $B_\theta(\w) = \mathcal M_{\theta,p}(\mathbb P^{\prime})$ around $\w=\mathbb{E}_{\mathbb{P^{\prime}}}[\boldsymbol{\tilde \xi^{\prime}}] \in \R^m$ in the following result.  Recall that a vector $\v \in \mathbb R  ^m$ is called a subgradient of a convex function $f:\mathbb R^m \to \mathbb R$ at a point $\z\in \mathbb R^m$, if 
$$
f(\y) \geq f(\z) + \v^{\top}(\y-\z) \quad \text{ for all } \y \in \mathbb R\,,
$$
and that the set of all subgradients of $f$ at $\z$ is denoted as $\partial^c f(\z)$.

\begin{lemma}\label{reformulation for normal cone of M under L1 norm}
    Let $\| \cdot \| := \| \cdot \|_{1}$, and let the assumptions of Theorem \ref{expectation ball and wasserstein ball} hold. Denote by 
    $f_{i}(\s)=|s_{i}|$ for $\s \in \mathbb{R}^{m}$, $i\in [m]$,  and by $\partial^{c} |s_i|  $ the (convex) subgradient of $|\cdot|$ at $s_i$, calculated by
    \begin{equation*}
      \partial^{c} |s_i| =
        \left\{
            \begin{aligned}
            \{ 1 \}\, , \ \ \ \ & \ \ \text{if} \ \ s_i  > 0\, ,\\
            \{ -1 \}, \ \ & \ \  \text{if} \ \ s_i < 0\, ,\\
            \ [-1, 1]\, , & \ \  \text{if} \ \ s_i  = 0\, .
            \end{aligned}
        \right.
    \end{equation*}
    Then,
    \begin{equation*}
        \mathcal{N}_{B_\theta(\w)}(\z) = \mathrm{cone}(\partial^{c} |z_1-w_1 | \times \cdots \times \partial^{c} | z_m-w_m | ) \ \text{ for } \z \in \partial B_\theta(\w)\, ,
    \end{equation*}
    where $\times$ denotes the Cartesian product.
\end{lemma}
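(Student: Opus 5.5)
The plan is to translate to the unit $\ell_1$-ball and then compute the normal cone both from the definition and from the dual-norm characterization. Put $\s := \z - \w$, so that $\|\s\|_1 = \theta$ for $\z \in \partial B_\theta(\w)$. Since $\mathcal N_{B_\theta(\w)}(\z) = \mathcal N_{B_\theta(\boldsymbol 0)}(\s)$ by translation invariance of \eqref{normal cone of M set}, it suffices to show
\[
\mathcal N_{B_\theta(\boldsymbol 0)}(\s) = \mathrm{cone}\bigl(\partial^c|s_1|\times\cdots\times\partial^c|s_m|\bigr) =: \mathcal K .
\]
By definition, $\v$ belongs to the normal cone iff $\v\T\s \ge \sup_{\|\y\|_1\le\theta}\v\T\y$. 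The right-hand side equals $\theta\|\v\|_\infty$, because $\|\cdot\|_\infty$ is the dual of $\|\cdot\|_1$, as recorded after Theorem~\ref{dual norm regularization}. So the task reduces to characterizing the vectors $\v$ with $\v\T\s = \theta\|\v\|_\infty$. The reverse inequality $\v\T\s \le \|\v\|_\infty\|\s\|_1$ always holds, so this is exactly the equality case of Hölder's inequality.

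For $\mathcal K \subseteq \mathcal N$, take $\v = \lambda\mathbf g$ with $\lambda \ge 0$ and $g_i \in \partial^c|s_i|$. Then $g_i s_i = |s_i|$ and $\|\mathbf g\|_\infty \le 1$. Hence $\v\T\s = \lambda\|\s\|_1 = \lambda\theta \ge \theta\|\v\|_\infty$, and therefore $\v\T\s \ge \v\T\y$ for every $\y$ in the ball.

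For $\mathcal N \subseteq \mathcal K$, take $\v$ in the normal cone. If $\v = \boldsymbol 0$ it lies in $\mathcal K$ trivially, so assume $\v \neq \boldsymbol 0$ and set $\lambda := \|\v\|_\infty > 0$ and $\mathbf g := \v/\lambda$. Then $\|\mathbf g\|_\infty = 1$, and the equality $\v\T\s = \theta\|\v\|_\infty$ reads
\[
\sum_i g_i s_i = \sum_i |s_i| .
\]
Since $g_i s_i \le |g_i||s_i| \le |s_i|$ term by term, equality must hold in every term. For indices with $s_i \ne 0$ this forces $g_i = \operatorname{sign}(s_i)$. For indices with $s_i = 0$, the condition $|g_i| \le 1$ gives $g_i \in [-1,1] = \partial^c|0|$. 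Thus $\mathbf g$ lies in the Cartesian product of the subdifferentials, and $\v = \lambda\mathbf g \in \mathcal K$.

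The main obstacle is this termwise equality-case argument. In particular, one must use the fact that $\|\s\|_1 = \theta$, which holds because $\z$ lies on the boundary, so that the inequality from the normal-cone definition becomes the tight Hölder equality. An alternative route would invoke the standard fact that the normal cone to a sublevel set $\{f \le \theta\}$ at a point with $f = \theta > \min f$ is $\mathrm{cone}\,\partial^c f$. Combining this with $\partial^c\|\cdot\|_1(\s) = \prod_i\partial^c|s_i|$ via the separable sum rule would give the same result, but the direct Hölder argument above is self-contained.
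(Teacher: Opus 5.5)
Your proof is correct, but it takes a different route from the paper's.

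The paper sets $g(\z)=\|\z-\w\|_1-\theta$ and argues through Clarke's nonsmooth calculus:
convex Lipschitz functions are regular;
the normal cone to the sublevel set $\{g\le 0\}$ at a boundary point is the closure of $\{\lambda\boldsymbol\eta:\lambda\ge 0,\ \boldsymbol\eta\in\partial^c g(\z)\}$;
and a sum rule for the separable $g$ turns $\partial^c g(\z)$ into the Cartesian product of the $\partial^c|z_i-w_i|$.
That is exactly the ``alternative route'' you sketch in your last paragraph.

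You instead work straight from the definition~\eqref{normal cone of M set}, using the dual-norm identity $\sup_{\|\y\|_1\le\theta}\v\T\y=\theta\|\v\|_\infty$ and the termwise equality case of H\"older's inequality. Your version is self-contained and makes the role of the hypothesis visible. The sublevel-set formula behind the paper's citations needs $\boldsymbol 0\notin\partial^c g(\z)$. Replacing the closure by $\mathrm{cone}(\partial^c g(\z))$ also needs the generated cone to be closed, which holds because $\partial^c g(\z)$ is compact and avoids the origin. Both facts follow from $\|\z-\w\|_1=\theta>0$ forcing some $z_i\neq w_i$. The paper leaves this implicit, whereas your argument uses $\|\s\|_1=\theta$ explicitly in both inclusions.

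The paper's approach, in turn, sits inside a general framework for arbitrary regular Lipschitz constraint functions. Yours is tailored to norm balls, though it carries over verbatim to any norm: the normal cone at a boundary point $\s$ is the cone over the dual-unit vectors $\v$ with $\v\T\s=\|\s\|$.
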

\begin{proof}
  Define $g : \mathbb{R}^{m} \to \mathbb{R}$ by
    \begin{equation*}
        g(\z) := \| \z - \w \|_{1} - \theta = \sum^{m}_{i = 1} f_i(\z-\w) - \theta\, ,\quad \z \in \mathbb{R}^{m}\, .
    \end{equation*}
    Since $ f_{i}$ and thus $g$ are convex  Lipschitz functions, they are regular~\cite[Proposition 2.3.6 (b)]{clarke1990optimization}). Thus, using~\cite[Theorem~2.3.10 and Example~2.1.3]{clarke1990optimization}, 
    in conjunction with~\cite[Proposition 2.4.2, Proposition 2.3.3 and the associated Corollary 3]{clarke1990optimization}, we can reformulate the normal cone of $B_{\theta}(\w)$ as follows:
    \begin{equation*}
  \mathcal{N}_{B_\theta(\w)}(\z) = \text{cl}(\lambda \partial^{c} g(\z): \lambda \geq 0 ) = \text{cone}(\partial^{c} g(\z))= \text{cone}(\partial^{c}|z_1-w_1 | \times   \cdots \times \partial^{c} | z_m-w_m|  )
    \end{equation*}
     for all $\z \in \partial B_\theta(\w)$, which completes the proof.
\end{proof}
\begin{corollary}
    Let $\|\cdot\| := \|\cdot\|_{1}$ and let the assumptions of Theorem \ref{expectation ball and wasserstein ball} hold. Define $\h(\x) = \operatorname{svec}(\x \x^{\top})$ on the standard simplex $\Delta \subset \mathbb{R}^{n}$ and let $\w = \mathbb E_{\mathbb P^{\prime}}[\boldsymbol{\tilde {\xi}^{\prime}}]\in \mathbb R^m$. If $m > 1$, then there is no $\z \in B_{\theta}(\w)$ such that
\begin{equation}\label{inclusion relation of M}
        \lbrace \h(\x): \x \in \Delta \rbrace \subset \mathcal{N}_{B_\theta(\w)}(\z).
    \end{equation}
    However, if $m = 1$, then $z := w + \theta$ satisfies \eqref{inclusion relation of M}.
\end{corollary}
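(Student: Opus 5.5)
The plan is to use the explicit description of the normal cone of the $\ell_1$-ball from Lemma~\ref{reformulation for normal cone of M under L1 norm} and test it against the images of the vertices of $\Delta$. First note that $m=\tfrac{n(n+1)}{2}>1$ forces $n\ge 2$, and that $\h(\x)\neq \mathbf 0$ for every $\x\in\Delta$: its first $n$ entries are $x_i^2$, and $\sum_i x_i^2\ge 1/n$. If $\z$ lies in the interior of $B_\theta(\w)$, then $\mathcal N_{B_\theta(\w)}(\z)=\{\mathbf 0\}$. So the inclusion \eqref{inclusion relation of M} can only hold at a boundary point $\z\in\partial B_\theta(\w)$, that is, one with $\|\z-\w\|_1=\theta$.

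For such $\z$, Lemma~\ref{reformulation for normal cone of M under L1 norm} gives that every $\v\in\mathcal N_{B_\theta(\w)}(\z)$ has the form $\v=\lambda\s$ with $\lambda\ge0$ and $s_i\in\partial^c|z_i-w_i|$. Because $\theta>0$, the index set $K:=\{k\in[m]: z_k\neq w_k\}$ is nonempty. For $k\in K$ we have $s_k=\pm1$, while $|s_i|\le1$ for every $i$. Hence every nonzero normal vector $\v$ satisfies
\[
|v_k|=\lambda=\max_{i\in[m]}|v_i|\quad\text{for all } k\in K .
\]
In words, every coordinate in $K$ attains the maximal absolute value of $\v$. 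This is the key structural fact, and it follows directly from the lemma.

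Now I would plug in the vertices $\e_j\in\Delta$ for $j\in[n]$. With the chosen ordering of $\svec$, $\h(\e_j)=\svec(\e_j\e_j\T)$ is the $j$-th standard unit vector of $\R^m$; it has a single nonzero entry, equal to $1$, in position $j$. If $\h(\e_j)\in\mathcal N_{B_\theta(\w)}(\z)$, the key fact forces $\lambda=1$ and requires the entry of $\h(\e_j)$ at every $k\in K$ to have absolute value $1$. Therefore $K\subseteq\{j\}$, and since $K\neq\emptyset$, $K=\{j\}$. Applying this to both $j=1$ and $j=2$ (possible since $n\ge2$) gives $\{1\}=K=\{2\}$, a contradiction. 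So no $\z\in B_\theta(\w)$ satisfies \eqref{inclusion relation of M}.

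For $m=1$ we have $n=1$ and $\Delta=\{1\}$, so $\h(\Delta)=\{1\}$. At $z=w+\theta$ the normal cone of the interval $[w-\theta,w+\theta]$ is $\R_+$, which contains $1$; this also follows from Lemma~\ref{reformulation for normal cone of M under L1 norm}, since $\partial^c|\theta|=\{1\}$. I do not expect a real obstacle. The only points needing care are excluding interior points of the ball and using the specific ordering of $\svec$, under which $\h(\e_j)$ is a coordinate vector.
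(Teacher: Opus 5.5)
Your proposal is correct and follows essentially the paper's route: use Lemma~\ref{reformulation for normal cone of M under L1 norm} to describe the normal cone, exclude interior points (where the cone is $\{\mathbf 0\}$), and test the inclusion against the vertex images $\h(\e_j)$, which are coordinate vectors of $\R^m$. The only difference is in one sub-step. The paper first rules out $\mathcal I_-$ using the barycenter image $\u$, then rules out $\mathcal I_+$ using the vertices. You treat $\mathcal I_+\cup\mathcal I_-$ at once through the absolute-value observation and two distinct vertices, which also makes explicit the need for $i\neq j$ (hence $n\ge 2$) that the paper's step ``$u_{i,j}=0$'' leaves implicit.
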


\begin{proof} For $m = 1$,  the norms $\norm\cdot_1 = {\norm\cdot}_\infty$ and their ball coincide  to 
    \begin{equation*}
        B_\theta(w) = \lbrace y \in \mathbb{R}: | y -w | \leq \theta \rbrace = [ \mathbb{E}_{\mathbb{P'}}[\tilde{\xi}^{\prime}] - \theta, \mathbb{E}_{\mathbb{P'}}[\tilde{\xi}^{\prime}] + \theta]\,.
    \end{equation*}
   Thus $z := \mathbb{E}_{\mathbb{P'}}[\tilde{\xi}^{\prime}] + \theta$ satisfies \eqref{inclusion relation of M} by application of Corollary~\ref{maximal element under infty norm}.
   Now assume $m > 1$ and put $\mathcal{I}:=[m]$. Using a proof by contradiction, we assume that there exists $\z \in B_\theta(\w)$ (in fact, $\z \in \partial B_\theta(\w)$, otherwise $\h(\Delta)\nsubseteq  \lbrace {\boldsymbol{0}} \rbrace = \mathcal{N}_{B_\theta(\w)}(\z)$ is already a contradiction) satisfying \eqref{inclusion relation of M}. Define index subsets as follows:
    \begin{equation*}
        \mathcal{I}_{+} := \lbrace k \in \mathcal{I}: z_{k} > w_k \rbrace \, , \ \mathcal{I}_{-} := \lbrace k \in \mathcal{I}: z_{k} < w_k \rbrace\, , \ \mathcal{I}_{0} := \lbrace k \in \mathcal{I}: z_{k} =w_k \rbrace\, .
    \end{equation*}
    Using Lemma \ref{reformulation for normal cone of M under L1 norm}, we can obtain
\begin{equation}\label{normal cone of M for n > 1}
  \mathcal{N}_{B_\theta(\w)}(\z) = \text{cone} (G)
    \end{equation}
    with $$
    G:=\Biggl\{ \sum_{k\in  \mathcal{I}_{+} }\e_k - \sum_{k\in \mathcal{I}_{-}}\e_k + \sum_{k\in \mathcal{I}_{0}}  \eta_k\e_k \in \mathbb{R}^{m}: \eta_k \in [-1, 1]\mbox{ for all }k\in \mathcal{I}_{0}\Biggr\} .
    $$
Define $\u:=\tfrac{1}{n^2}(1,\dots,1, \sqrt{2},\dots,\sqrt{2})^{\top} \in \mathbb R^{m}$  and $\u_i:=\e_i \in \mathbb R^m$, for $i=1,\dots,n$ and observe that $\u, \u_1,\dots,\u_n$ correspond to feasible solutions for the StQP(``the equal weight" and the ``$n$ basis vectors", respectively):
$$
\u, \u_1,\dots,\u_n \in \{\h(\x):\x \in \Delta\}\,.
$$
We claim that $\mathcal{I}_{-} = \emptyset$. For the sake of contradiction, suppose that there exists $j \in \mathcal{I}_{-}$. Using~\eqref{normal cone of M for n > 1}, we can see that for any $\y\in \mathcal{N}_{B_\theta(\w)}(\z)$, there exists $\lambda \geq 0$ and $\v \in G$ such that $\y = \lambda \v$. In particular, this means
    \begin{equation*}
        y_{j} = \lambda v_{j} = -\lambda \leq 0\,,
    \end{equation*}
i.e. every $\y\in \mathcal{N}_{B_\theta(\w)}(\z)$ has a nonnegative entry. However, $\u \in  \{\h(\x):\x \in \Delta\}$ has only strictly positive entries. This means that $\u \notin \mathcal{N}_{B_\theta(\w)}(\z)$ which contradicts~\eqref{inclusion relation of M}.\\
Next, we claim that $\mathcal{I}_{+} = \emptyset$. For that, assume that there exists $j \in \mathcal{I}_{+}$. Since 
$$
\{\u_i\}_{i=1}^n \subset \{\h(\x):\x \in \Delta\}
$$
and by assumption $\{\h(\x):\x \in \Delta\} \subset \mathcal{N}_{B_\theta(\w)}(\z)$, we have $\{\u_i\}_{i=1}^n\subset \mathcal{N}_{B_\theta(\w)}(\z)$. Thus, using~\eqref{normal cone of M for n > 1}, we can see that there exists $\lambda_{i} \geq 0$ and $\v_{i} \in G$ such that
    \begin{equation}\label{counter example equ 1}
        \u_{i} = \lambda_{i} \v_{i} \ \text{ and } \ u_{i,j} = \lambda_{i} v_{i,j} = \lambda_{i} \geq 0.
    \end{equation}
    Since $u_{i,j} = 0$, using \eqref{counter example equ 1}, we can see $\lambda_{i} = 0$ and $\u_{i} = \lambda_{i} \v_{i} = \boldsymbol{0}$, which contradicts the definition of $\u_{i}$.\\
    We have proved $\mathcal{I}_{-} = \mathcal{I}_{+} = \emptyset$, which implies that $\mathcal{I}_{0} = \mathcal{I}$. Based on the definition of $\mathcal{I}_{0}$, we can see that
    \begin{equation*}
        \|\z - \w\|_{1} = \| \boldsymbol{0}\|_{1} = 0 < \theta\, .
    \end{equation*}
    But then $\z$ is an interior point of $B_\theta(\w)$, which contradicts $\z \in \partial B_\theta(\w)$. Thus, we have finished the proof.
\end{proof}

\section{Uniform versions of performance guarantee}
\label{sec:AppendixB about proof of section out-of-sample performance guarantees}

In this section we will 
continue the discussion of Section~\ref{sec:4 out-of-sample performance guarantees}, now providing uniform out-of-sample performance guarantees for sub-Gaussian and sub-exponential distributions. In Proposition~\ref{lem:centered Gaussian is also sub-Gaussian} we showed that if $\widetilde \Gb \sim \operatorname{GOE}(n)$ is a random symmetric matrix from the Gaussian Orthogonal Ensemble, then $\boldsymbol{\tilde \xi} = \operatorname{svec}(\widetilde \Gb)\sim \mathcal N_m(\boldsymbol{0},2\Ib)$ and it is in particular sub-Gaussian. Recall that every sub-Gaussian random vector is automatically also sub-exponential. 

\noindent {In Proposition~\ref{prop:Wishart are sub-exponential but not sub-Gaussian} we showed that if $\widetilde \Wb \sim \mathcal W_n(\Ib,n)$ is a random symmetric matrix from the Wishart Ensemble with identity covariance matrix and $n$ degrees of freedom, then $\boldsymbol{\tilde \xi} = \operatorname{svec}(\widetilde \Wb)$ is sub-exponential but not sub-Gaussian.}

\begin{theorem}[Hoeffding's inequality using a covering number based method for sub-Gaussian random vectors]\label{Hoeffding inequality for sub-gaussian vectors}
    Let $\lbrace \boldsymbol{\tilde \xi}_{i} \rbrace^{N}_{i=1} \subset \mathbb{R}^{m}$ be independent sub-Gaussian random vectors. 
    Then, for any $\beta \in (0, 1)$, with probability at least $1 - \beta$,
    \begin{equation*}
        \left\| \frac{1}{N} \sum^{N}_{i = 1} \boldsymbol{\tilde \xi}_{i} \right\|_{2} \leq C K \left( \sqrt{\frac{m}{N}} + \sqrt{\frac{\log(2 / \beta)}{N}} \right),
    \end{equation*}
    where $C > 0$ is an absolute constant and $K := \max_{i} \| \boldsymbol{\tilde \xi}_{i}\|_{\psi_{2}}$.
\end{theorem}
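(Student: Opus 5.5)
The plan is a standard $\varepsilon$-net (covering number) argument that reduces the vector statement to a one-dimensional Hoeffding inequality. We assume throughout that the $\boldsymbol{\tilde \xi}_i$ are centered. In the intended application they are $\operatorname{svec}(\widehat\Qb_i) - \mathbb E_{\mathbb P_{\rm true}}[\operatorname{svec}(\widetilde\Qb)]$. Centering changes the sub-Gaussian norm only by an absolute constant factor \cite[Lemma~2.6.8]{vershynin2018high}, and that factor can be absorbed into $C$.

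Put $\s := \frac1N\sum_{i=1}^N \boldsymbol{\tilde \xi}_i$, so that $\|\s\|_2 = \sup_{\y\in\mathbb S^{m-1}} \y^\top\s$. The first step is to fix a $\tfrac12$-net $\mathcal K\subset \mathbb S^{m-1}$ with cardinality $|\mathcal K|\le 5^m$, which exists by the usual volumetric argument. For $\y^*$ attaining $\|\s\|_2$, choose $\y\in\mathcal K$ with $\|\y-\y^*\|_2\le \tfrac12$. Then
$$\|\s\|_2 = \y^\top\s + (\y^*-\y)^\top\s \le \y^\top \s + \tfrac12\|\s\|_2,$$
which gives $\|\s\|_2\le 2\max_{\y\in\mathcal K}\y^\top\s$. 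It therefore suffices to control finitely many scalar averages.

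The second step handles a fixed $\y\in\mathcal K$. The summands $\y^\top\boldsymbol{\tilde \xi}_i$ are independent, centered, real sub-Gaussian variables with $\|\y^\top\boldsymbol{\tilde \xi}_i\|_{\psi_2}\le\|\boldsymbol{\tilde \xi}_i\|_{\psi_2}\le K$, by Definition~\ref{definition of sub-gaussian}. The general Hoeffding inequality for sums of independent sub-Gaussian variables \cite[Theorem~2.6.3]{vershynin2018high} (equivalently, $\|\sum_i \y^\top\boldsymbol{\tilde \xi}_i\|_{\psi_2}^2\le C_0\sum_i K^2$) then yields
$$\PP\bigl[\y^\top\s\ge t\bigr]\le 2\exp\bigl(-cNt^2/K^2\bigr)$$
for an absolute constant $c>0$.

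The third step is a union bound over $\mathcal K$:
$$\PP\bigl[\|\s\|_2\ge 2t\bigr]\le 2\cdot 5^m\exp\bigl(-cNt^2/K^2\bigr).$$
We choose $t = C'K\bigl(\sqrt{m/N}+\sqrt{\log(2/\beta)/N}\bigr)$. Using $(a+b)^2\ge a^2+b^2$, we get $cNt^2/K^2\ge cC'^2\bigl(m+\log(2/\beta)\bigr)$, which is at least $m\log 5+\log(2/\beta)$ once $cC'^2\ge\max\{\log 5,1\}$. The right-hand side is then at most $\beta$, and the claim follows with $C:=2C'$.

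The only genuinely delicate point is the scalar tail bound with the correct $\sqrt N$ scaling and absolute constants, which we import from the cited Hoeffding inequality. Beyond that, the main care is to state the centering assumption explicitly, since without it the bound fails trivially: a nonzero constant vector is sub-Gaussian, yet its average does not shrink with $N$.
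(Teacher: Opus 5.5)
Your proposal is correct and follows the paper's proof essentially step for step. Both arguments use a $\tfrac12$-net of $\mathbb S^{m-1}$ with at most $5^m$ points and the reduction $\|\s\|_2\le 2\max_{\y\in\mathcal K}\y^\top\s$ (the paper cites Vershynin's Exercise~4.4.2 for it, while you verify it directly), then the general Hoeffding inequality in each fixed direction, a union bound, and solving for the radius. Your explicit centering assumption is the right one to make: the paper's proof uses that the $\y^\top\boldsymbol{\tilde\xi}_i$ have mean zero without stating it in the theorem, and it later applies the result to $\widehat{\boldsymbol{\zeta}}_i=\widehat{\boldsymbol{\xi}}_i-\boldsymbol{\mu}_{\rm true}$ with $K$ taken from the uncentered vectors, which is exactly the gap your centering-lemma remark closes.
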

\begin{proof}
    Fix $\y \in \mathbb{S}^{m-1}$ arbitrarily. We can see that $\y^{\top}\boldsymbol{\tilde \xi}_{i}$ are independent random variables with mean zero satisfying 
    \begin{equation*}
        \| \y^{\top}\boldsymbol{\tilde \xi}_{i}\|_{\psi_{2}} \leq \| \boldsymbol{\tilde \xi}_{i}\|_{\psi_{2}} \leq K.
    \end{equation*}
    Using the general Hoeffding's inequality \cite[Theorem 2.6.3]{vershynin2018high}, we obtain \begin{equation}\label{Hoeffding's inequality for sub-gaussian equ 1}
        \mathbb{P}_{\rm true}^N\left[ \left| \frac{1}{N} \sum^{N}_{i = 1} \y^{\top}\boldsymbol{\tilde \xi}_{i} \right| \geq r \right] = \mathbb{P}_{\rm true}^N\left[ \left| \sum^{N}_{i = 1} \y^{\top}\boldsymbol{\tilde \xi}_{i} \right| \geq N r \right] \leq 2 \exp \left( - \frac{c N r^{2}}{K^{2}}\right)
    \end{equation}
    with an absolute constant $c>0$. Let $\mathcal N$ be an $\frac{1}{2}$-net (see \cite[Definition 4.2.1]{vershynin2018high}) of the sphere ${\mathbb S}^{m-1}$. By definition, we have $\mathcal N \subset \mathbb S^{m-1}$. By~\cite[Corollary 4.2.13]{vershynin2018high}, we know that $2^{m} \leq |\mathcal{N}| \leq 5^{m}$. Using~\cite[Exercise 4.4.2]{vershynin2018high}, we can get
\begin{equation}\label{Hoeffding's inequality for sub-gaussian equ 2}
        \left\| \frac{1}{N} \sum^{N}_{i = 1} \boldsymbol{\tilde \xi}_i\right\|_{2} \leq \frac{2}{N} \sup_{\v \in \mathcal{N}} \sum^{N}_{i = 1}\v^{\top}\boldsymbol{\tilde \xi}_i\,.
    \end{equation}
    Thus, for any $r > 0$, using \eqref{Hoeffding's inequality for sub-gaussian equ 1} and \eqref{Hoeffding's inequality for sub-gaussian equ 2}, we can obtain
\begin{equation}\label{Hoeffding's inequality for sub-gaussian equ 3}
        \begin{aligned}
            \mathbb{P}_{\rm true}^N&\left[ \left\| \frac{1}{N} \sum^{N}_{i = 1} \boldsymbol{\tilde \xi}_i \right\|_{2} \geq r \right]\leq \mathbb{P}_{\rm true}^N \left[ \sup_{\v \in \mathcal{N}} \sum^{N}_{i = 1}\v^{\top}\boldsymbol{\tilde \xi}_i \geq \frac{Nr}{2} \right] = \mathbb{P}_{\rm true}^N \left[ \bigcup_{\v \in \mathcal{N}}\Biggl\{\sum^{N}_{i = 1}\v^{\top}\boldsymbol{\tilde \xi}_i \geq \frac{Nr}{2} \Biggr\}\right] \\
            &\leq \sum_{\v \in \mathcal{N}} \mathbb{P}_{\rm true}^N\left[ \sum^{N}_{i = 1}\v^{\top}\boldsymbol{\tilde \xi}_i \geq \frac{Nr}{2} \right] \leq 5^{m} \cdot 2 \exp\left( - \frac{c' N r^{2}}{K^{2}}\right),
        \end{aligned}
    \end{equation}
    where $c' = c / 4$ is an absolute constant. Take $\beta \in (0, 1)$ arbitrarily. Letting the right hand side of \eqref{Hoeffding's inequality for sub-gaussian equ 3} equal to $\beta$, after simplification, we can yield
    \begin{equation*}
        r = \frac{K}{\sqrt{c'}} \sqrt{\frac{m \log 5 + \log(2/\beta)}{N}}.
    \end{equation*}
    Thus, we can obtain
    \begin{equation*}
        \mathbb{P}_{\rm true}^N\left[ \left\| \frac{1}{N} \sum^{N}_{i = 1} \boldsymbol{\tilde \xi}_i\right\|_{2} \leq C K \left( \sqrt{\frac{m}{N}} + \sqrt{\frac{\log(2 / \beta)}{N}} \right) \right] \geq 1 - \beta
    \end{equation*}
    with an absolute constant $C$.
\end{proof}


\noindent In the following, we use two further methods to derive Bernstein-type inequalities for sub-exponential random vectors. Recall that the experiments in Section~\ref{sec:5 numerical simulations} satisfy the sub-exponentiality assumption by~Propositions~\ref{lem:centered Gaussian is also sub-Gaussian} and~\ref{prop:Wishart are sub-exponential but not sub-Gaussian}.

\begin{theorem}[Bernstein's inequality using a covering number based method for sub-exponential random vectors]\label{bernstein inequality under sub-exponential based on covering number method}
    Let $\lbrace \boldsymbol{\tilde \xi}_i  \rbrace^{N}_{i=1} \subset \mathbb{R}^{m}$ be independent sub-exponential random vectors. 
    Then, for any $s \geq 0$, with probability at least $1 - 2 \exp(-s)$,
    \begin{equation*}
        \left\| \frac{1}{N} \sum^{N}_{i = 1} \boldsymbol{\tilde \xi}_i \right\|_{2} \leq C K \left( \sqrt{\frac{m + s}{N}} + \frac{m + s}{N} \right)
    \end{equation*}
    where $C > 0$ is an absolute constant and $K := \max_{i} \| \boldsymbol{\tilde \xi}_i\|_{\psi_{1}}$.
\end{theorem}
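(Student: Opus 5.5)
I would follow the template of the proof of Theorem~\ref{Hoeffding inequality for sub-gaussian vectors}, replacing the sub-Gaussian Hoeffding bound by Bernstein's inequality for sub-exponential variables. As there, I would tacitly assume the vectors $\boldsymbol{\tilde \xi}_i$ are centered. (In the application they are the deviations $\widehat{\boldsymbol{\xi}}_i-\mathbb E_{\mathbb P_{\rm true}}[\boldsymbol{\tilde\xi}]$, whose $\psi_1$-norm is at most a constant multiple of $K$ by the centering lemma \cite[Exercise~2.7.10]{vershynin2018high}.)

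\emph{Fixed direction.} Fix $\y\in\mathbb S^{m-1}$. The scalars $\y^{\top}\boldsymbol{\tilde\xi}_i$ are independent and mean zero, with $\|\y^{\top}\boldsymbol{\tilde\xi}_i\|_{\psi_1}\le K$ by Definition~\ref{definition of sub-exponential}. Bernstein's inequality \cite[Theorem~2.8.1]{vershynin2018high} then gives, for every $r\ge0$,
\[
\mathbb P_{\rm true}^N\Bigl[\Bigl|\tfrac1N\textstyle\sum_{i=1}^N\y^{\top}\boldsymbol{\tilde\xi}_i\Bigr|\ge r\Bigr]\le 2\exp\Bigl(-cN\min\Bigl\{\tfrac{r^2}{K^2},\tfrac{r}{K}\Bigr\}\Bigr),
\]
with an absolute constant $c>0$.

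\emph{Net argument.} Take a $\tfrac12$-net $\mathcal N$ of $\mathbb S^{m-1}$ with $|\mathcal N|\le5^m$. By \eqref{Hoeffding's inequality for sub-gaussian equ 2}, $\|\frac1N\sum_i\boldsymbol{\tilde\xi}_i\|_2\le 2\sup_{\v\in\mathcal N}\frac1N\sum_i\v^{\top}\boldsymbol{\tilde\xi}_i$. A union bound over $\mathcal N$, applied with threshold $r/2$, yields
\[
\mathbb P_{\rm true}^N\Bigl[\Bigl\|\tfrac1N\textstyle\sum_i\boldsymbol{\tilde\xi}_i\Bigr\|_2\ge r\Bigr]\le 5^m\cdot2\exp\Bigl(-c'N\min\Bigl\{\tfrac{r^2}{K^2},\tfrac rK\Bigr\}\Bigr), \qquad c'=c/4 .
\]

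\emph{Choice of $r$.} This step, which handles the two-regime exponent, is the main obstacle. Set $r:=CK\bigl(\sqrt{(m+s)/N}+(m+s)/N\bigr)$ and $t:=(m+s)/N$, so that $r/K=C(\sqrt t+t)$. If $t\le1$, then $r/K\ge C\sqrt t$ and $r/K\ge C t\cdot$ hmm is not needed: since $C\ge1$ and $C(\sqrt t+t)\ge C\sqrt t$, both $r^2/K^2\ge C^2t\ge Ct$ and $r/K\ge C\sqrt t\ge Ct$ hold. If $t>1$, then $r/K\ge Ct$ and $r^2/K^2\ge C^2t^2\ge Ct$. In either case $\min\{r^2/K^2,r/K\}\ge Ct$, so the right-hand side is at most $2\exp\bigl(m\log5-c'C(m+s)\bigr)$. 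Choosing $C\ge\max\{1,\,\log5/c',\,1/c'\}$ makes $c'C\ge1$ and $c'C\ge\log 5$, which bounds this by $2\exp(-s)$. This yields the claimed bound with probability at least $1-2\exp(-s)$.
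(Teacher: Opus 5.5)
Your proof is correct and follows the paper's argument: Bernstein's inequality for the scalars $\y^{\top}\boldsymbol{\tilde\xi}_i$ in each fixed direction, the $\tfrac12$-net bound with a union bound over at most $5^m$ points, and a choice $r\propto K(\sqrt t+t)$ with $t=(m+s)/N$. The only difference is how you handle the two regimes of the minimum: you split into $t\le 1$ and $t>1$, while the paper takes $r=2\max\{\sqrt{c'},c'\}K(\sqrt t+t)$ and checks both branches of the minimum directly. Like the paper, you need the $\boldsymbol{\tilde\xi}_i$ to be centered, and you rightly say so explicitly. Do remove the stray drafting fragment ``$r/K\ge Ct\cdot$ hmm is not needed'' in the case $t\le1$.
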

\begin{proof}
    Using \cite[ Corollary 2.8.3]{vershynin2018high} and a similar proof to Theorem \ref{Hoeffding inequality for sub-gaussian vectors}, we obtain  \begin{equation}\label{Bernstein's inequality for sub-gaussian equ 1}
\mathbb{P}_{\rm true}^N\left[\left\| \frac{1}{N} \sum^{N}_{i = 1} \boldsymbol{\tilde \xi}_i \right\|_{2} \geq r \right] \leq 5^{m} \cdot 2 \exp\left( - c N \min \biggl\{\frac{(r/2)^{2}}{K^{2}}, \frac{(r/2)}{K} \biggr\}\right)
    \end{equation}
    with an absolute constant $c > 0$. Take any arbitrary $s > 0$. Letting the right hand side of \eqref{Bernstein's inequality for sub-gaussian equ 1} be less or equal than $2 \exp(-s)$, we get
    \begin{equation*}
        \min \biggl\{\frac{(r/2)^{2}}{K^{2}}, \frac{(r/2)}{K} \biggr\}\geq \frac{m \log 5 + s}{c N }.
    \end{equation*}
We can see that there exists an absolute constant $c' > 0$ satisfying $c'(m + s) \geq (m \log 5 + s)/c$. Take
    \begin{equation*}
        r := 2\max\lbrace \sqrt{c'}, c' \rbrace \cdot K \left( \sqrt{\frac{m + s}{N}} + \frac{m + s}{N} \right).
    \end{equation*}
    Finally, it can be verified that
    \begin{equation*}
        \min \biggl\{\frac{(r/2)^{2}}{K^{2}}, \frac{(r/2)}{K} \biggr\}\geq \frac{c'(m + s)}{N}\,.
    \end{equation*}
\end{proof}

\begin{theorem}[Bernstein's inequality using a martingale based method for sub-exponential random vectors]\label{bernstein inequality under sub-exponential based on martingale method}
    Let $\lbrace \boldsymbol{\tilde \xi}_i  \rbrace^{N}_{i=1} \subset \mathbb{R}^{m}$ be independent sub-exponential random vectors. 
    Assume that there exists $R > 0$ such that
    \begin{equation*}
        \| \| \boldsymbol{\tilde \xi}_i \|_{2} \|_{\psi_{1}} \leq R\, \quad \text{for all } i \in [N]\,.
    \end{equation*}
Then, for any $r > 0$,   \begin{equation}\label{bernstein's inequality upper bound}
  \mathbb{P}_{\rm true}^N\left[\left\| \frac{1}{N} \sum^{N}_{i = 1} \boldsymbol{\tilde \xi}_i \right\|_{2} \geq r \right] \leq 2 \exp\left( - \frac{N r^{2}}{8R^{2} + R r} \right).
    \end{equation}
Let $\beta \in (0, 1)$, then with probability at least $1 - \beta$,
    \begin{equation*}
        \left\| \frac{1}{N} \sum^{N}_{i = 1} \boldsymbol{\tilde \xi}_i \right\|_{2} \leq 2\sqrt{2}R \sqrt{\frac{2 \log(2/\beta)}{N}} + \frac{R\log(2/\beta)}{N}.
    \end{equation*}
\end{theorem}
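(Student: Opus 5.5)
The bound will come from a vector-valued martingale concentration inequality in the Hilbert space $(\mathbb R^m,\|\cdot\|_2)$, specifically Pinelis' Bernstein-type inequality for sums of independent, centered random vectors in a $(2,1)$-smooth Banach space. Throughout I treat the $\boldsymbol{\tilde \xi}_i$ as centered, as in Theorem~\ref{Hoeffding inequality for sub-gaussian vectors}; in the application they are $\widehat{\boldsymbol{\xi}}_i - \mathbb E_{\mathbb P_{\rm true}}[\boldsymbol{\tilde\xi}]$. Set $S_k:=\sum_{i\le k}\boldsymbol{\tilde \xi}_i$. Since the summands are independent and centered, $(S_k)_k$ is a martingale with respect to the natural filtration. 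Euclidean space is $(2,1)$-smooth, so the smoothness constant is $D=1$.

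Pinelis' result requires a Bernstein moment condition $\mathbb E\|\boldsymbol{\tilde\xi}_i\|_2^k\le \tfrac{k!}{2}\sigma_i^2 b^{k-2}$ for all $k\ge2$. Given such a condition, it yields
\[
\mathbb P\Bigl[\max_{k\le N}\|S_k\|_2\ge t\Bigr]\le 2\exp\Bigl(-\tfrac{t^2}{2(\textstyle\sum_i\sigma_i^2)+2bt}\Bigr)
\]
(up to the exact form of the constants in the version I cite).

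The main step is to verify this moment condition from the assumption $\|\|\boldsymbol{\tilde\xi}_i\|_2\|_{\psi_1}\le R$. By definition of the $\psi_1$ norm we have $\mathbb E\exp(\|\boldsymbol{\tilde\xi}_i\|_2/R)\le 2$. Expanding the exponential gives $\mathbb E\|\boldsymbol{\tilde\xi}_i\|_2^k/(k!R^k)\le 1$, hence $\mathbb E\|\boldsymbol{\tilde\xi}_i\|_2^k\le k!R^k$ for every $k$. Writing $k!R^k=\tfrac{k!}{2}(2R^2)R^{k-2}$ shows the condition holds with $\sigma_i^2=2R^2$ and $b=R$. I expect this bookkeeping to be the main obstacle: the target exponent $Nr^2/(8R^2+Rr)$ fixes the constants, so I will have to track whether Pinelis' denominator matches or whether a factor (for instance $2$ versus $4$ in front of $\sum\sigma_i^2$) has to be absorbed by a slightly cruder moment bound such as $\sigma_i^2=4R^2$.

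Next I substitute $t=Nr$ and $\sum_i\sigma_i^2=2NR^2$ (or $4NR^2$). The exponent becomes $N^2r^2/(cNR^2+c'NRr)$, which reduces to $Nr^2/(8R^2+Rr)$ once the constants are aligned. This gives \eqref{bernstein's inequality upper bound} after bounding $\|S_N\|_2\le\max_k\|S_k\|_2$.

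For the second claim I set $L:=\log(2/\beta)$ and require the right-hand side to be at most $\beta$, that is, $Nr^2\ge L(8R^2+Rr)$. This is a quadratic inequality in $r$, satisfied by its positive root $r^*=\tfrac{RL}{2N}+\sqrt{\tfrac{R^2L^2}{4N^2}+\tfrac{8R^2L}{N}}$. Using $\sqrt{a+b}\le\sqrt a+\sqrt b$ gives $r^*\le \tfrac{RL}{N}+2\sqrt2R\sqrt{L/N}$, which is dominated by the stated bound $2\sqrt2R\sqrt{2L/N}+RL/N$. Since the probability bound is monotone in $r$, taking $r$ equal to the stated bound gives failure probability at most $\beta$, which completes the proof.
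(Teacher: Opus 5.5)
You follow the paper's route: Pinelis' martingale inequality in $(\mathbb R^m,\|\cdot\|_2)$, moment bounds read off from $\mathbb E\exp(\|\boldsymbol{\tilde\xi}_i\|_2/R)\le 2$, then a quadratic in $r$. Your moment bound $\mathbb E\|\boldsymbol{\tilde\xi}_i\|_2^k\le k!R^k$ is sharper than the paper's $2k!R^k$, and you are right that centering must be assumed.

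The gap is the sentence claiming the exponent ``reduces to $Nr^2/(8R^2+Rr)$ once the constants are aligned''. No such alignment is possible.

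\begin{itemize}
\item With the Bernstein form you wrote, $\sigma_i^2=2R^2$, $b=R$, $t=Nr$ give $2\exp\bigl(-Nr^2/(4R^2+2Rr)\bigr)$. Its linear term $2Rr$ exceeds the target's $Rr$, so for $r>4R$ the first claim does not follow.
\item Passing to $\sigma_i^2=4R^2$ only inflates the quadratic term and leaves $2Rr$ untouched.
\item $b<R$ is not available. For a general vector satisfying the hypothesis (e.g.\ one whose norm has tail of order $e^{-t/R}/t^2$), the condition $\mathbb E\|\boldsymbol{\tilde\xi}_i\|_2^k\le\frac{k!}{2}\sigma^2b^{k-2}$ fails for large $k$ when $b<R$.
\item Even the sharp sub-gamma form $2\exp\bigl(-t^2/(\Sigma+bt+\sqrt{\Sigma}\sqrt{\Sigma+2bt})\bigr)$, with $\Sigma=\sum_i\sigma_i^2$, gives the exponent $Nr^2/(2R^2+Rr+2R\sqrt{R^2+Rr})$. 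This dominates $Nr^2/(8R^2+Rr)$ only for $r\le 8R$.
\end{itemize}

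The paper reaches exactly $8R^2+Rr$ only because it quotes Pinelis' Theorem~3.3 as $2\exp\bigl(-s^2/(B^2+B\sqrt{B^2+2Cs})\bigr)$, with no linear term $Cs$ in the denominator. That form cannot hold. A single symmetric Laplace variable of scale $R/2$ meets its hypotheses with $B=2R$, $C=R$. Yet at $s=100R$ the quoted bound is about $2e^{-307}$, while the actual tail is $e^{-200}$. So you cannot borrow that alignment either.

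What closes the gap is to use the $\psi_1$ hypothesis as a bound on the moment generating function up to $\lambda=1/R$, not merely as a Bernstein moment condition.

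\begin{itemize}
\item Since $\sum_{k\ge 0}\mathbb E\|\boldsymbol{\tilde\xi}_i\|_2^k/(k!R^k)\le 2$, for every $\theta\in[0,1]$ you get $\mathbb E\bigl[e^{\theta\|\boldsymbol{\tilde\xi}_i\|_2/R}-1-\theta\|\boldsymbol{\tilde\xi}_i\|_2/R\bigr]\le\theta^2\sum_{k\ge 2}\mathbb E\|\boldsymbol{\tilde\xi}_i\|_2^k/(k!R^k)\le\theta^2$.
\item Use Pinelis' $\cosh$ estimate in Hilbert space, which underlies his Bernstein bound and follows from a second-order Taylor expansion of $\cosh(\lambda\|\cdot\|_2)$ along each centered increment: $\mathbb E\cosh(\lambda\|S_N\|_2)\le\prod_{i=1}^N\bigl(1+\mathbb E[e^{\lambda\|\boldsymbol{\tilde\xi}_i\|_2}-1-\lambda\|\boldsymbol{\tilde\xi}_i\|_2]\bigr)$.
\item Apply it with $\lambda=\theta/R$, together with $\cosh(\lambda s)\ge e^{\lambda s}/2$ and Markov's inequality. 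This yields $\mathbb P_{\rm true}^N\bigl[\|S_N\|_2\ge Nr\bigr]\le 2\exp\bigl(-N(\theta r/R-\theta^2)\bigr)$.
\item Take $\theta=\min\{1,r/(2R)\}$. The exponent is $Nr^2/(4R^2)$ for $r\le 2R$ and $N(r/R-1)$ for $r\ge 2R$. Both are at least $Nr^2/(8R^2+Rr)$; the second because $(u-1)(u+8)\ge u^2$ whenever $u=r/R\ge 8/7$.
\end{itemize}

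This proves the first claim. Your derivation of the high-probability bound from it is then correct as written.
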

\begin{proof} First recall that if $\boldsymbol{\tilde \xi}\in \mathbb R^m$ is a sub-exponential random vector with parameter $\lambda$, this means that $\y^{\top}\boldsymbol{\tilde \xi}$ is a sub-exponential random variable with parameter $\lambda$, for every $\y \in \mathbb S^{m-1}$. This means that by definition of the Euclidean norm (whose dual is also the Euclidean norm!),
$$
\|\boldsymbol{\tilde \xi}\|_2=\sup_{\y \in \mathbb S^{m-1}} \y^{\top}\boldsymbol{\tilde \xi}
$$
is also a sub-exponential random variable with parameter $\lambda$. Using Definition \ref{definition of sub-exponential}, for any $k \in \mathbb{N}$ and $i \in [N]$, we know that \begin{equation*}\mathbb{E}_{\mathbb P_{\rm true}^N}\left[  \frac{\left(\frac{\|\boldsymbol{\tilde \xi}_i \|_{2}}{\| \| \boldsymbol{\tilde \xi}_i \|_{2} \|_{\psi_{1}}}\right)^{k}}{k!} \right] \leq \mathbb{E}_{\mathbb P_{\rm true}^N}\left[\sum_{k=0}^{\infty}\frac{\left(\frac{\|\boldsymbol{\tilde \xi}_i \|_{2}}{\| \| \boldsymbol{\tilde \xi}_i \|_{2} \|_{\psi_{1}}}\right)^{k}}{k!} \right] = \mathbb{E}_{\mathbb P_{\rm true}^N}\left[ \exp\left( \frac{\| \boldsymbol{\tilde \xi}_i \|_{2}}{\| \| \boldsymbol{\tilde \xi}_i \|_{2} \|_{\psi_{1}}} \right) \right] \leq 2\,.
    \end{equation*}
The last step can be explained in the following way. The function $t\mapsto \mathbb E_{\mathbb P_{\rm true}}[\exp(|\tilde \xi|/t)]$ is decreasing in $t$, which means
$$
\mathbb E_{\mathbb P_{\rm true}}[\exp(|\tilde \xi|/R)] \leq \mathbb E_{\mathbb P_{\rm true}}[\exp(|\tilde \xi|/t)]\,, \quad \text{for all } t \leq R\,.
$$
\noindent Now if $\|\tilde \xi\|_{\Psi_1} \leq R$, then there exists a $t^*\in (0,R]$ such that
$$
\mathbb E_{\mathbb P_{\rm true}}[\exp(|\tilde \xi|/R)] \leq \mathbb E_{\mathbb P_{\rm true}}[\exp(|\tilde \xi|/t^*)]\leq 2
$$
hence in particular, $\|\tilde \xi\|_{\Psi_1} \leq R$ implies $
\mathbb E_{\mathbb P_{\rm true}}[\exp(|\tilde \xi|/R)]\leq 2$. Therefore, since 
\begin{equation*}\mathbb{E}_{\mathbb P_{\rm true}}\left[  \frac{\left(\frac{\|\boldsymbol{\tilde \xi}_i \|_{2}}{\| \| \boldsymbol{\tilde \xi}_i \|_{2} \|_{\psi_{1}}}\right)^{k}}{k!} \right] \leq 2\,,
    \end{equation*}
by rearranging and building the sum over all $i\in [N]$ we obtain
\begin{equation*}
\sum_{i=1}^N\mathbb{E}_{\mathbb P_{\rm true}^N}[\|\boldsymbol{\tilde \xi}_i \|_{2}^{k}] \leq 2k!\sum_{i=1}^{N}\mathbb{E}_{\mathbb P_{\rm true}^N}[\| \| \boldsymbol{\tilde \xi}_i \|_{2} \|_{\psi_{1}}^{k}] \leq 2k!\cdot N \cdot R^k\,.
\end{equation*}

\noindent Letting $C := R$, $B := 2 \sqrt{N} R$, and using \cite[Theorem 3.3]{pinelis1994optimum}, we obtain for any $s > 0$,  
\begin{equation*}
\mathbb{P}_{\rm true}^N\left[\max_{j \in [N]}\left\| \sum_{i = 1}^j \boldsymbol{\tilde \xi}_i \right\|_{2} \geq s \right]\leq 2 \exp\left( - \frac{s^{2}}{B^{2} + B \sqrt{B^{2} + 2 C s}} \right)\,.
    \end{equation*}
Since 
$$
\left\| \sum_{i = 1}^N\boldsymbol{\tilde \xi}_i \right\|_{2}  \leq \max_{j \in [N]}\left\| \sum_{i = 1}^j \boldsymbol{\tilde \xi}_i \right\|_{2}\,, 
$$
we obtain by the monotonicity of $\mathbb P_{\rm true}^N$
\begin{equation*}
\mathbb{P}_{\rm true}^N\left[\left\| \sum_{i = 1}^N \boldsymbol{\tilde \xi}_i \right\|_{2} \geq s \right]\leq 2 \exp\left( - \frac{s^{2}}{B^{2} + B \sqrt{B^{2} + 2 C s}} \right)\,.
    \end{equation*}
Now we can use the fact that 
 \begin{equation*}
        \sqrt{1 + x} \leq 1 + \frac{x}{2}\, \quad \text{for all } x \geq 0,
    \end{equation*}
which implies 
 \begin{equation*}
        B \sqrt{B^{2} + 2C s} = B^{2} \sqrt{1 + \frac{2 C s}{B^{2}}} \leq B^{2} \left( 1 + \frac{C s}{B^{2}} \right) = B^{2} + C s\,,
    \end{equation*}
therefore 
\begin{equation}\label{bernstein inequality for sub-gaussian equ 5}
2 \exp\left( - \frac{s^{2}}{B^{2} + B \sqrt{B^{2} + 2 C s}} \right)\leq 2 \exp\left( - \frac{s^{2}}{2B^{2} + C s} \right) = 2 \exp\left( - \frac{s^{2}}{8NR^{2} + R s} \right)\,.
    \end{equation}
Substituting $s := N r$ with $r > 0$ into \eqref{bernstein inequality for sub-gaussian equ 5}, we can obtain \eqref{bernstein's inequality upper bound}. Letting the right hand side of \eqref{bernstein's inequality upper bound} equal to $\beta \in (0, 1)$, we get
\begin{equation}\label{bernstein inequality for sub-gaussian equ 6}
        \frac{N r^{2}}{8R^{2} + R r} = \log(2/\beta)\,.
    \end{equation}
By the quadratic formula, this implies that
\begin{equation*}
        r = \frac{R \log(2/\beta) + \sqrt{R^{2}(\log(2/\beta))^{2} + 32 N R^{2} \log(2/\beta)}}{2N} \leq \frac{R}{N} \log(2/\beta) + 2 \sqrt{2} R \sqrt{\frac{\log(2/\beta)}{N}}\,,
    \end{equation*}
where in the last step we have used the fact that
$$
\sqrt{x + y} \leq \sqrt{x} + \sqrt{y}\, \quad \text{for all } x,y \geq 0\,.
$$
The claimed result follows.\end{proof}

\noindent In Theorem~\ref{thm:sub-exponential-random-vectors-pointwise} we claim that for any $\y \in \mathbb{S}^{n-1}$ and $\beta \in (0, 1)$, with probability at least $1 - \beta$, we can guarantee
\begin{equation}\label{aim out-of-sample guarantee 1}
    \y^{\top} \left( \mathbb{E}_{\widehat{\mathbb{P}}_{N}}[\widetilde\Qb] - \mathbb{E}_{\mathbb P_{\rm true}}[\widetilde\Qb] \right) \y \leq \theta_N(\beta),
\end{equation}
with some $\theta_N(\beta) > 0$. In fact, we will focus on a stronger form to guarantee \eqref{aim out-of-sample guarantee 1}, namely
\begin{equation*}
    \sup_{\y \in \mathbb{S}^{n-1}} \biggl\{\y^{\top} \left( \mathbb{E}_{\widehat{\mathbb{P}}_{N}}[\widetilde\Qb] - \mathbb{E}_{\mathbb P_{\rm true}}[\widetilde\Qb] \right) \y \biggr\} \leq \theta_N(\beta)
\end{equation*}
with probability at least $1-\beta$.

\begin{lemma}\label{transition for high probability bound}
Let  $ \widehat{\boldsymbol{\xi}}_i = \operatorname{svec}(\widehat \Qb_i)$ be i.i.d. sub-exponential random vectors generated by observations of random $\widehat \Qb_i\in {\mathcal S}^n$, $i\in [N]$. Let $\overline{\boldsymbol{\xi}}:=\frac{1}{N}\sum_{i=1}^N\widehat{\boldsymbol{\xi}}_i$ denote the sample mean, let $\boldsymbol{\mu}_{\rm true} := \mathbb E_{\mathbb P_{\rm true}}[\boldsymbol{\tilde \xi}]$ denote the true mean, and set $\widehat{\boldsymbol{\zeta}}_i = \widehat{\boldsymbol{\xi}}_i - \boldsymbol{\mu}_{\rm true}$, so that $\overline{\boldsymbol{\xi}}- \boldsymbol{\mu}_{\rm true}= \frac{1}{N} \sum^{N}_{i = 1} \widehat{\boldsymbol{\zeta}}_i $. Then,
    \begin{equation*}
        \sup_{\y \in \mathbb{S}^{n-1}}  \biggl\{ \y^{\top} \left( \mathbb{E}_{\widehat{\mathbb{P}}_{N}}[\widetilde\Qb] - \mathbb{E}_{\mathbb P_{\rm true}}[\widetilde\Qb] \right) \y  \biggr\}\leq \| \overline{\boldsymbol{\xi}}- \boldsymbol{\mu}_{\rm true}\|_{2} = \left\| \frac{1}{N} \sum^{N}_{i = 1} \widehat{\boldsymbol{\zeta}}_i \right\|_{2}.
    \end{equation*}
\end{lemma}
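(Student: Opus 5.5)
The plan is to rewrite the quadratic form as a Frobenius inner product, pass to $\operatorname{svec}$, and apply Cauchy--Schwarz in $\mathbb R^m$. First I identify the two expectations. We have $\mathbb{E}_{\widehat{\mathbb{P}}_{N}}[\widetilde\Qb] = \overline{\Qb}$ and $\operatorname{svec}(\overline{\Qb}) = \overline{\boldsymbol{\xi}}$, because $\operatorname{svec}$ is linear. Likewise $\operatorname{svec}(\mathbb{E}_{\mathbb P_{\rm true}}[\widetilde\Qb]) = \boldsymbol{\mu}_{\rm true}$, since expectation commutes with the linear map $\operatorname{svec}$; the first moments exist by sub-exponentiality.

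For fixed $\y\in\mathbb S^{n-1}$, put $\Db := \mathbb{E}_{\widehat{\mathbb{P}}_{N}}[\widetilde\Qb] - \mathbb{E}_{\mathbb P_{\rm true}}[\widetilde\Qb]$. Using the isometry $\langle \Ab,\Bb\rangle_F = \operatorname{svec}(\Ab)^\top\operatorname{svec}(\Bb)$, I write
$$\y^\top \Db\,\y = \langle \y\y^\top, \Db\rangle_F = \operatorname{svec}(\y\y^\top)^\top(\overline{\boldsymbol{\xi}}-\boldsymbol{\mu}_{\rm true}).$$
Cauchy--Schwarz then bounds this by $\|\operatorname{svec}(\y\y^\top)\|_2\,\|\overline{\boldsymbol{\xi}}-\boldsymbol{\mu}_{\rm true}\|_2$. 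The first factor is $\|\y\y^\top\|_F = \y^\top\y = 1$, which is the same identity used in the proof of Theorem~\ref{thm:DRStQP}.

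The bound is uniform in $\y$, so taking the supremum over $\mathbb S^{n-1}$ gives the inequality. The final equality holds by definition, since $\overline{\boldsymbol{\xi}}-\boldsymbol{\mu}_{\rm true} = \frac1N\sum_i(\widehat{\boldsymbol{\xi}}_i-\boldsymbol{\mu}_{\rm true})$. I do not expect a real obstacle. The only care needed is in justifying that $\operatorname{svec}$ commutes with both expectations, which follows from linearity and integrability, and in noting that $\|\y\y^\top\|_F=\|\y\|_2^2=1$ on the sphere.
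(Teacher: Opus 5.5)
Your proof is correct and matches the paper's argument. The only difference is the order of steps: the paper applies Cauchy--Schwarz directly in $\mathcal S^n$ with the Frobenius inner product, uses $\|\y\y^\top\|_F=\|\y\|_2^2=1$, and then converts $\|\cdot\|_F$ into $\|\operatorname{svec}(\cdot)\|_2$ via the isometry, whereas you pass to $\operatorname{svec}$ first and apply Cauchy--Schwarz in $\mathbb R^m$. Your explicit check that $\operatorname{svec}$ commutes with both expectations is a detail the paper leaves implicit.
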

\begin{proof}
    For any $\y \in \mathbb{S}^{n - 1}$ and $\Ab = \mathbb{E}_{\widehat{\mathbb{P}}_{N}}[\widetilde\Qb] - \mathbb{E}_{\mathbb P_{\rm true}}[\widetilde\Qb] \in \mathcal S^{n}$, we know by Cauchy-Schwarz in ${\mathcal S}^n$ that
   $$
    \y^{\top} \Ab \y = \langle \y \y^{\top} ,   \Ab \rangle_F \leq {\norm{\y\y\T}}_F
    {\norm{\Ab}}_F\, .
   $$
   But $\y \in \mathbb{S}^{n-1}$ implies ${\norm{\y\y\T}}_F =\norm{\y}^2_2 = 1$.
  Thus 
 \begin{equation*}
        \sup_{\y \in \mathbb{S}^{n-1}}  \biggl\{ \y^{\top} \left( \mathbb{E}_{\widehat{\mathbb{P}}_{N}}[\widetilde\Qb] - \mathbb{E}_{\mathbb P_{\rm true}}[\widetilde\Qb] \right) \y  \biggr\} 
        \leq {\norm{\Ab}}_F = 
       {\norm{ \operatorname{svec}(\Ab)}}_2 = \| \overline{\boldsymbol{\xi}}- \boldsymbol{\mu}_{\rm true}\|_{2}\, ,
    \end{equation*}
  by the isometry property of the $\operatorname{svec}(\cdot) $ operator.
\end{proof}
\begin{theorem}[Uniform result for sub-exponential random instances]\label{thm:sub-exponential-random-vectors-uniform}
Let $ \widehat{\boldsymbol{\xi}}_i = \operatorname{svec}(\widehat \Qb_i)$ be i.i.d. sub-exponential random vectors generated by observations of random $\widehat \Qb_i\in {\mathcal S}^n$, $i\in [N]$. Then, for any $\beta \in (0, 1)$,
  \begin{equation*}  
        \mathbb P_{\rm true}^N\left[  \sup_{\y \in \mathbb{S}^{n-1}}  \biggl\{\y^{\top}\left( \mathbb{E}_{\widehat{\mathbb{P}}_{N}}[\widetilde\Qb] - \mathbb{E}_{\mathbb P_{\rm true}}[\widetilde\Qb] \right)\y \biggr\} \leq C K \left( \sqrt{\frac{m + \log(2/\beta)}{N}} + \frac{m + \log(2/\beta)}{N}  \right) \right] \geq 1 - \beta,
    \end{equation*}
    where $C > 0$ is an absolute constant, $K := \max_{i} \| \widehat{\boldsymbol{\xi}}_i \|_{\psi_{1}}$, and $\| \cdot \|_{\psi_{1}}$ represents the sub-exponential norm defined in Definition \ref{definition of sub-exponential}. Furthermore, assume that there exists an $R > 0$ such that
    \begin{equation*}
        \| \|\widehat{\boldsymbol{\xi}}_i\|_{2} \|_{\psi_{1}} \leq R\, \quad \text{for all } i \in [N]\,.
    \end{equation*}
    Then,
    \begin{equation*}
        \mathbb P_{\rm true}^N\left[  \sup_{\y \in \mathbb{S}^{n-1}}  \biggl\{\y^{\top}\left( \mathbb{E}_{\widehat{\mathbb{P}}_{N}}[\widetilde\Qb] - \mathbb{E}_{\mathbb P_{\rm true}}[\widetilde\Qb] \right)\y  \biggr\} \leq 2\sqrt{2}R \sqrt{\frac{2 \log(2/\beta)}{N}} + \frac{R\log(2/\beta)}{N} \right] \geq 1 - \beta.
    \end{equation*}
    Moreover, if $\{\widehat{\boldsymbol{\xi}}_i\}_{i=1}^N\subset \mathbb R^m$ are i.i.d. sub-Gaussian random vectors, we have
    \begin{equation*}
        \mathbb P_{\rm true}^N\left[  \sup_{\y \in \mathbb{S}^{n-1}} \biggl\{ \y^{\top}\left( \mathbb{E}_{\widehat{\mathbb{P}}_{N}}[\widetilde\Qb] - \mathbb{E}_{\mathbb P_{\rm true}}[\widetilde\Qb] \right)\y \biggr\} \leq C K \left( \sqrt{\frac{m}{N}} + \sqrt{\frac{\log(2 / \beta)}{N}} \right) \right] \geq 1 - \beta,
    \end{equation*}
    where $C > 0$ is an absolute constant, $K := \max_{i} \| \widehat{\boldsymbol{\xi}}_i \|_{\psi_{2}}$, and $\| \cdot \|_{\psi_{2}}$ represents the sub-Gaussian norm defined in Definition \ref{definition of sub-gaussian}.
\end{theorem}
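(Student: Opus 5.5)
The plan is to reduce the uniform statement to a concentration bound for the Euclidean norm of a centered sample mean in $\R^m$, and then invoke the three vector inequalities proved just above. By Lemma~\ref{transition for high probability bound}, the supremum over $\y\in\mathbb S^{n-1}$ of the quadratic-form deviation is bounded by $\|\frac1N\sum_{i=1}^N\widehat{\boldsymbol{\zeta}}_i\|_2$, where $\widehat{\boldsymbol{\zeta}}_i=\widehat{\boldsymbol{\xi}}_i-\boldsymbol{\mu}_{\rm true}$. Hence each event of the form $\{\|\frac1N\sum_i\widehat{\boldsymbol{\zeta}}_i\|_2\le r\}$ is contained in the corresponding event for the supremum. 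By monotonicity of $\mathbb P_{\rm true}^N$, it therefore suffices to show that each of the three radii bounds $\|\frac1N\sum_i\widehat{\boldsymbol{\zeta}}_i\|_2$ with probability at least $1-\beta$.

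The centered vectors $\widehat{\boldsymbol{\zeta}}_i$ are independent and have mean zero; this is the hypothesis that Theorems~\ref{Hoeffding inequality for sub-gaussian vectors}, \ref{bernstein inequality under sub-exponential based on covering number method} and \ref{bernstein inequality under sub-exponential based on martingale method} implicitly use, since their proofs rely on the centered Hoeffding and Bernstein inequalities. Centering preserves sub-exponentiality and sub-Gaussianity, at the cost of an absolute constant in the Orlicz norm (centering lemma, \cite[Exercise 2.7.10, Lemma 2.6.8]{vershynin2018high}). So $\|\widehat{\boldsymbol{\zeta}}_i\|_{\psi_1}\le C'K$ and, in the sub-Gaussian case, $\|\widehat{\boldsymbol{\zeta}}_i\|_{\psi_2}\le C'K$. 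That constant is absorbed into $C$.

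The three bounds then follow in turn:
\begin{enumerate}[label=(\roman*)]
\item For the first bound, apply Theorem~\ref{bernstein inequality under sub-exponential based on covering number method} with $s:=\log(2/\beta)$. This gives failure probability $2e^{-s}=\beta$ and exactly the stated radius $CK\bigl(\sqrt{(m+\log(2/\beta))/N}+(m+\log(2/\beta))/N\bigr)$.
\item For the sub-Gaussian bound, apply Theorem~\ref{Hoeffding inequality for sub-gaussian vectors} directly to the $\widehat{\boldsymbol{\zeta}}_i$.
\item For the bound under the condition $\|\|\widehat{\boldsymbol{\xi}}_i\|_2\|_{\psi_1}\le R$, apply Theorem~\ref{bernstein inequality under sub-exponential based on martingale method} to the $\widehat{\boldsymbol{\zeta}}_i$. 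Its right-hand side $2\sqrt2R\sqrt{\log(2/\beta)/N}+R\log(2/\beta)/N$ is at most the claimed one, which carries an extra $\sqrt2$.
\end{enumerate}

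The main obstacle is (iii), because the hypothesis there concerns $\widehat{\boldsymbol{\xi}}_i$ and not the centered $\widehat{\boldsymbol{\zeta}}_i$. I will first try to control the centered vectors through
\[
\|\widehat{\boldsymbol{\zeta}}_i\|_2\le\|\widehat{\boldsymbol{\xi}}_i\|_2+\|\boldsymbol{\mu}_{\rm true}\|_2,
\qquad
\|\boldsymbol{\mu}_{\rm true}\|_2\le\mathbb E\|\widehat{\boldsymbol{\xi}}_i\|_2\le R,
\]
which gives $\|\|\widehat{\boldsymbol{\zeta}}_i\|_2\|_{\psi_1}\le R+R/\log2$, a constant multiple of $R$. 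Matching the explicit constants exactly would require reading $R$ as a bound on the centered vectors. Failing that, I would note that the martingale argument only uses the moment bounds $\mathbb E\|\widehat{\boldsymbol{\zeta}}_i\|_2^k\le 2k!R^k$ together with the mean-zero property, and check that the slack factor $\sqrt2$ in the claim absorbs the centering.
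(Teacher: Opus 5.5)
Your proposal is the paper's proof: Lemma~\ref{transition for high probability bound} reduces the supremum to $\|\frac1N\sum_i\widehat{\boldsymbol{\zeta}}_i\|_2$, and Theorems~\ref{bernstein inequality under sub-exponential based on covering number method}, \ref{bernstein inequality under sub-exponential based on martingale method} and \ref{Hoeffding inequality for sub-gaussian vectors} then give the three bounds. The paper applies those theorems to $\widehat{\boldsymbol{\zeta}}_i$ using $K$ and $R$ defined for $\widehat{\boldsymbol{\xi}}_i$ without comment, so your centering remarks are more careful than the original.

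Your fallback for the third bound does go through. Since $\|\widehat{\boldsymbol{\zeta}}_i\|_2\le\|\widehat{\boldsymbol{\xi}}_i\|_2+\|\boldsymbol{\mu}_{\rm true}\|_2$ and Jensen gives $\|\boldsymbol{\mu}_{\rm true}\|_2\le R\log 2$, you get $\mathbb E\exp(\|\widehat{\boldsymbol{\zeta}}_i\|_2/R)\le 2\cdot 2=4$. Hence $\mathbb E\|\widehat{\boldsymbol{\zeta}}_i\|_2^k\le 3k!R^k$. The Pinelis step in the proof of Theorem~\ref{bernstein inequality under sub-exponential based on martingale method}, run with $B^2=6NR^2$ and $C=R$, then yields the radius $2\sqrt3\,R\sqrt{\log(2/\beta)/N}+R\log(2/\beta)/N$. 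This lies below the stated $4R\sqrt{\log(2/\beta)/N}+R\log(2/\beta)/N$, so the explicit constants survive centering.
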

\begin{proof}
Using Lemma~\ref{transition for high probability bound}, we know that
    \begin{equation*}
        \sup_{\y \in \mathbb{S}^{n-1}} \biggl\{\y^{\top} \left( \mathbb{E}_{\widehat{\mathbb{P}}_{N}}[\widetilde\Qb] - \mathbb{E}_{\mathbb P_{\rm true}}[\widetilde\Qb] \right) \y \biggr\} \leq  \left\| \frac{1}{N} \sum^{N}_{i = 1} \widehat{\boldsymbol{\zeta}}_i\right\|_{2}.
    \end{equation*}
The first and the second claims follow immediately from Theorem \ref{bernstein inequality under sub-exponential based on covering number method} and Theorem \ref{bernstein inequality under sub-exponential based on martingale method}, respectively. Based on an analogue of the above proof, we immediately obtain the third claim using Lemma \ref{transition for high probability bound} and Theorem \ref{Hoeffding inequality for sub-gaussian vectors}.
\end{proof}

\end{document}